\newcommand{\form}[1]{{\langle #1 \rangle }}
\newcommand{\pfister}[1]{{\langle \! \langle #1 \rangle \! \rangle}}
\newcommand{\witti}[2]{{\mathfrak{i}_{#1}(#2)}}
\newcommand{\wittj}[2]{{\mathfrak{j}_{#1}(#2)}}
\newcommand{\anispart}[1]{{#1_{\mathrm{an}}}}
\newcommand{\normform}[1]{{#1_{\mathrm{nor}}}}
\newtheorem{theorem}{Theorem}[section]
\newtheorem{lemma}[theorem]{Lemma}
\newtheorem{sublemma}[theorem]{Sublemma}
\newtheorem{proposition}[theorem]{Proposition}
\newtheorem{corollary}[theorem]{Corollary}
\theoremstyle{definition}
\newtheorem{example}[theorem]{Example}
\theoremstyle{remark}
\newtheorem{remark}[theorem]{Remark}
\numberwithin{equation}{section}
\begin{document}

\title[]{A bound for the index of a quadratic form after scalar extension to the function field of a quadric}
\author{Stephen Scully}
\address{Department of Mathematical and Statistical Sciences, University of Alberta, Edmonton AB T6G 2G1, Canada}
\email{stephenjscully@gmail.com}

\subjclass[2010]{11E04, 14E05, 15A03.}
\keywords{Quadratic forms, isotropy indices, quadrics and quadratic Grassmannians, rational maps between quadratic Grassmannians}

\begin{abstract} Let $q$ be an anisotropic quadratic form defined over a general field $F$. In this article, we formulate a new upper bound for the isotropy index of $q$ after scalar extension to the function field of an arbitrary quadric. On the one hand, this bound offers a refinement of a celebrated bound established in earlier work of Karpenko-Merkurjev and Totaro; on the other, it is a direct generalization of Karpenko's theorem on the possible values of the first higher isotropy index. We prove its validity in two important cases: (i) the case where $\mathrm{char}(F) \neq 2$, and (ii) the case where $\mathrm{char}(F) = 2$ and $q$ is \emph{quasilinear} (i.e., diagonalizable). The two cases are treated separately using completely different approaches, the first being algebraic-geometric, and the second being purely algebraic. \end{abstract}

\maketitle

\section{Introduction} \label{Introduction} A basic, yet fundamental tool in the study of quadratic forms over general fields is that of scalar extension. Among its many uses, perhaps the most significant is that of forcing some quadratic form of interest to acquire a non-trivial zero, assuming that none exist in the base field (that is, forcing an \emph{anisotropic} form to become \emph{isotropic}). This has the effect of lowering the ``anisotropic dimension'' of the form, which, apart from providing a natural means by which to argue inductively, can often lead one to witness non-trivial behaviour in other quadratic forms of interest as the field of definition is enlarged. In order to draw meaningful conclusions, the challenge is typically then one of determining restrictions of a general nature on this kind of behaviour. What those restrictions are, however, will depend heavily on the particular choice of extension field. In order to maintain as much control as possible, it is often desirable to choose an extension for which the restrictions are most severe. Philosophically, this means that one should choose an extension which, in the sense of valuation theory, is ``generic'' for the problem at hand. For the purpose of forcing an anisotropic quadratic form to become isotropic, there is a canonical choice of extension having this property; namely, the function field of the projective quadric defined by the vanishing of that form. For these and other reasons, studying the effect of scalar extension to function fields of quadrics has been a dominant theme in much of the research carried out in the algebraic theory of quadratic forms since the early 1970s. 

One problem of central interest in this area is the following: Let $p$ and $q$ be anisotropic quadratic forms of dimension $\geq 2$ over a general field $F$, and let $P$ and $Q$ denote the projective $F$-quadrics defined by the vanishing of $p$ and $q$, respectively. Under what circumstances does $q$ become isotropic over the function field $F(p)$ of the quadric $P$? From an algebraic-geometric perspective, this simply amounts to asking for necessary and sufficient conditions in order for there to exist a rational map $P \dashrightarrow Q$ over $F$. Nevertheless, the fact that we impose no constraints on the triple $(F, q,p)$ endows the problem with a depth and complexity which belies its initial appearance. In fact, it is entirely unreasonable to hope for a complete solution in this generality; as a vast literature accumulated over the past three decades amply demonstrates, the problem is already considerably involved in low-dimensional situations (see, e.g., \cite[\S 8.2]{Kahn}).

In the present article, we will thus be concerned with a weaker variant of this problem which seeks to investigate restrictions of a particular kind on the isotropy index $\witti{0}{q_{F(p)}}$ of $q$ after scalar extension to the field $F(p)$. Recall here that if $\phi$ is a quadratic form defined on a finite-dimensional vectors space $V$ over a field, then its \emph{isotropy index} $\witti{0}{\phi}$ is defined as the maximal dimension of a subspace of $V$ on which $q$ is uniformly zero (that is, a \emph{totally isotropic subspace} of $V$). As such, the integer $\witti{0}{q_{F(p)}}$ can not only detect the isotropy of $q$ over $F(p)$, but measure the extent to which it persists; in the language of algebraic geometry, our problem is not only concerned with identifying necessary conditions in order for there to exist a rational map from $P$ to $Q$, but from $P$ to the variety of totally isotropic subspaces of any prescribed dimension in $Q$.

The restrictions which we are interested in here are of a very general nature; namely, those imposed by only the most basic discrete invariants of $p$ and $q$. In this respect, our problem has a long and rich history dating back to the late 1960s and the classic ``subform theorem'' of Cassels and Pfister. While many important contributions have been made over the years by a large number of authors, one of the most notable results established to date in this direction is a penetrating upper bound for the integer $\witti{0}{q_{F(p)}}$ involving just three invariants of $p$ and $q$: the dimension of $q$, the dimension of $p$, and the \emph{first higher isotropy index} of $p$, defined as the integer $\witti{1}{p} := \witti{0}{p_{F(p)}}$. Originally conjectured by Izhboldin (\cite{Izhboldin1}), and having its origin in the seminal work of Hoffmann (\cite{Hoffmann1}), the bound was first established in the case where $\mathrm{char}(F) \neq 2$ by Karpenko and Merkurjev (\cite[Cor. 4.2]{KarpenkoMerkurjev}), and later in full generality by Totaro (see \cite[Thm. 5.2]{Totaro1}): 

\begin{theorem}[{Karpenko-Merkurjev, Totaro}] \label{thmKMT} Let $p$ and $q$ be anisotropic quadratic forms of dimension $\geq 2$ over a field $F$. If $q_{F(p)}$ is isotropic, then
$$\witti{0}{q_{F(p)}} \leq \mathrm{dim}(q) - \mathrm{dim}(p) + \witti{1}{p}.$$ \end{theorem}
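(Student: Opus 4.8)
The plan is to follow the algebraic-geometric route of Karpenko and Merkurjev: translate the isotropy hypothesis into the existence of a rational map between projective homogeneous $F$-varieties, encode that map by an algebraic cycle, and extract the numerical bound by confronting this cycle, after base change to an algebraic closure $\bar F$, with the known restrictions on rational cycles living on an anisotropic quadric.

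First I would put $i \coloneqq \witti{0}{q_{F(p)}}$ and argue by contradiction, assuming $i > \mydim{q} - \mydim{p} + \witti{1}{p}$; in particular $i \geq 1$. Since $\witti{0}{q_{F(p)}} \leq \lfloor \mydim{q}/2 \rfloor$ always holds, the inequality we wish to contradict can only fail when $\mydim{q} < 2\bigl(\mydim{p} - \witti{1}{p}\bigr)$, and we may restrict to this range; this keeps all the quadrics in play ``small'' relative to $\mydim{p}$, so that the cycle computations below are governed by their leading terms. Over $F(p)$ the form $q$ acquires an $i$-dimensional totally isotropic subspace, so there is a rational map $P \dashrightarrow X$ defined over $F$, where $X$ denotes the variety of $i$-dimensional totally isotropic subspaces of the underlying space of $q$; this is a smooth projective $F$-variety which becomes homogeneous and rational over $\bar F$. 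Let $\Gamma \subseteq P \times X$ be the closure of the graph of this rational map. Then $\Gamma$ is an integral subvariety of dimension $\mydim{p} - 2$ whose projection to $P$ is birational, so $\Gamma$ defines a correspondence from $P$ to $X$ of multiplicity $1$, defined over $F$.

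Next I would base change to $\bar F$ and study the class $\bar\Gamma$ in $\mathrm{CH}_*(\bar P \times \bar X) \cong \mathrm{CH}_*(\bar P) \otimes \mathrm{CH}_*(\bar X)$, relative to the usual additive bases: powers $h^k$ of the hyperplane section together with the classes $\ell_k$ of $k$-dimensional linear subvarieties on the split quadric $\bar P$, and the Schubert basis on the split Grassmannian $\bar X$. Two facts constrain $\bar\Gamma$ heavily: it is a \emph{rational} cycle, i.e. it lies in the image of the restriction map $\mathrm{CH}_*(P \times X) \to \mathrm{CH}_*(\bar P \times \bar X)$; and its multiplicity-$1$ property fixes the component of $\bar\Gamma$ along $h^0 \otimes (\,\cdot\,)$. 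Composing $\bar\Gamma$ with the correspondences $h^k \cdot \bar\Gamma$ and with their transposes, and exploiting that over $F(p)$ the quadrics $P$ and $Q$ are isotropic of Witt indices $\witti{1}{p}$ and $\geq i$ respectively, one transports cycles back and forth and thereby manufactures new rational cycles on $\bar P$ (and on $\bar X$) of very restricted dimension. The assumption $i > \mydim{q} - \mydim{p} + \witti{1}{p}$ should eventually surface here as the existence of a rational cycle on the \emph{anisotropic} quadric $P$ that lies ``too low'' — heuristically, a linear-subspace class $\ell_j$ (occurring with odd coefficient, and not absorbed by the always-rational powers of $h$) forced to be rational at a dimension $j$ below the threshold that $\witti{1}{p}$ permits.

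The crux is to show that no such cycle can exist. Here one invokes the structure theory of the Chow groups and Chow motives of anisotropic quadrics (and their self-products) developed by Karpenko, Vishik and others, in which the first higher isotropy index $\witti{1}{p}$ plays the governing role; the precise input needed is a sharp non-existence statement for rational cycles, of which Karpenko's theorem on the possible values of $\witti{1}{p}$ is the prototype. Matching this against the outcome of the previous step produces the contradiction, and the theorem follows. I expect this cycle-theoretic ingredient, and its meshing with the correspondence computation, to be the real obstacle: in characteristic $0$ — and, with care, in every characteristic $\neq 2$ — the non-existence of small rational cycles on anisotropic quadrics rests on Voevodsky's Steenrod operations on Chow groups modulo $2$, on the Rost nilpotence principle, and on the incompressibility theorems of Karpenko and Karpenko--Merkurjev, machinery that has no good analogue for quasilinear forms in characteristic $2$; this is precisely why the abstract promises a separate, purely algebraic treatment in that case. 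A secondary technical burden, to be dealt with along the way, is the careful control of the graph closure $\Gamma$, of correspondence multiplicities, and of the Schubert calculus of the Grassmannian $\bar X$, none of which should rely on resolution of singularities.
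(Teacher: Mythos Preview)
The paper does not give its own proof of Theorem~\ref{thmKMT} in full generality: the result is quoted with attribution to Karpenko--Merkurjev (char $\neq 2$) and Totaro (general case), and only the \emph{quasilinear} case is re-derived later as Corollary~\ref{corKMTquasilinear}, by a completely different, purely algebraic route (the subform decomposition Theorem~\ref{thmsubformdec} $\Rightarrow$ Theorem~\ref{thmp1subform} $\Rightarrow$ dimension count). So there is no like-for-like comparison to make with a proof in this paper.

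Your sketch does follow the genuine Karpenko--Merkurjev strategy in outline --- encode the isotropy as a multiplicity-$1$ correspondence $P \dashrightarrow X$ to an orthogonal Grassmannian, pass to $\bar F$, and extract a forbidden rational cycle on the anisotropic quadric --- but you have misidentified the crucial input, and this is a real gap rather than a matter of taste. The Karpenko--Merkurjev bound does \emph{not} rest on Steenrod operations, nor on Karpenko's theorem constraining the values of $\witti{1}{p}$; those tools belong to the separate (and harder) Theorem~\ref{thmKarpenko}. What actually closes the argument is comparatively elementary: Rost's description of the Chow groups of a split quadric together with Springer's theorem (no isotropy over odd-degree extensions, hence no rational $0$-cycle of odd degree on an anisotropic quadric), organized through a careful analysis of how a multiplicity-$1$ correspondence interacts with the cellular bases over $\bar F$. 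Your paragraph invoking ``Voevodsky's Steenrod operations \ldots Rost nilpotence \ldots incompressibility theorems'' as the decisive obstruction is therefore aiming at the wrong target; as written, your plan would either import much heavier machinery than necessary or, if you really tried to feed Theorem~\ref{thmKarpenko} in as an input, would be circular relative to how the paper uses these results.

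A second, smaller issue: you say the characteristic-$2$ (quasilinear) case must be handled separately because the cycle machinery is unavailable. That is true of your approach, and it matches what the paper does --- but note that the paper's algebraic proof of the quasilinear case (via Theorem~\ref{thmp1subform}) is not an ad hoc patch: it yields a structural statement (``$p_1$ is similar to a subform of $\anispart{(q_{F(p)})}$'') strictly stronger than the numerical inequality, from which the bound drops out by comparing dimensions.
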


Unlike much of the work which came before it, Theorem \ref{thmKMT} was established using methods of algebraic geometry. Indeed, both sets of authors exploited the aforementioned geometric interpretation of the integer $\witti{0}{q_{F(p)}}$ in order to bring the theory of algebraic cycles to bear on the problem. This built upon foundational ideas of Vishik, who had earlier obtained non-trivial results in the same direction (\cite{Vishik2}). The theorem is all the more significant due to the fact that a considerable amount is known concerning the integer $\witti{1}{p}$. In fact, there is a precise conjectural description of its possible values due to Hoffmann, and this conjecture has been verified in the case where $\mathrm{char}(F) \neq 2$ by Karpenko (\cite{Karpenko1}). Unfortunately, Karpenko's proof makes essential use of a powerful fact which is not yet available in characteristic 2: the existence of Steenrod-type operations on the Chow groups of smooth projective varieties with $\mathbb{Z}/2\mathbb{Z}$ coefficients. Hoffmann's conjecture therefore remains open in the latter setting. In recent work of the author, however, completely different (and purely algebraic) methods were used to settle the conjecture for a special class of forms in characteristic 2. More specifically, it was shown in \cite[Thm 1.3]{Scully3} that if $\mathrm{char}(F) = 2$, then Hoffmann's conjecture on $\witti{1}{p}$ holds in the case where $p$ is \emph{quasilinear}, meaning that $p$ is isometric to a form of Fermat type, i.e., a weighted sum of squares $a_1X_1^2 + a_2X_2^2 + \cdots + a_nX_n^2$.  We are left with the following general result: 

\begin{theorem}[Karpenko, Scully] \label{thmKarpenko} Let $p$ be an anisotropic quadratic form of dimension $\geq 2$ over a field $F$, and let $s = v_2\big(\mathrm{dim}(p) - \witti{1}{p}\big)$. Assume that either
\begin{enumerate} \item The characteristic of $F$ is not $2$, or
\item The characteristic of $F$ is 2 and $p$ is quasilinear. \end{enumerate}
Then $\witti{1}{p} \leq 2^s$. \end{theorem}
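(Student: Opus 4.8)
\emph{Strategy and reduction.} Put $D = \mathrm{dim}(p)$ and $i = \witti{1}{p}$. A short computation with base-$2$ expansions (Kummer's theorem on carries) shows that the inequality to be proved, $i \leq 2^{v_2(D - i)}$, is equivalent to the family of congruences
\[
\binom{D - i}{t} \equiv 0 \pmod{2} \qquad \text{for } 1 \leq t \leq i - 1 ;
\]
both say precisely that the lowest nonzero binary digit of $D - i$ lies in a position $\geq \lceil \log_2 i \rceil$. I would establish these congruences directly in case (1), and deduce the inequality by a separate route in case (2); that the two cases require genuinely different arguments reflects the absence, in characteristic $2$, of the cohomological operations available otherwise.

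\emph{Case (1): $\mathrm{char}(F) \neq 2$.} Following Karpenko \cite{Karpenko1}, let $X$ be the projective quadric of $p$, of dimension $N = D - 2$, with base change $\bar X$ to an algebraic closure of $F$. The first step is to translate $\witti{1}{p} = i$, via Vishik's analysis of the Chow motive of a quadric, into the statement that a specific cycle $\rho$ on $X \times X$ is \emph{rational} --- i.e.\ lies in the image of $\mathrm{Ch}(X \times X) \to \mathrm{Ch}(\bar X \times \bar X)$: over $\bar F$ the cycle $\rho$ is an explicit $\mathbb{F}_2$-combination of the standard basis cycles $h^a \times l_b$, $l_a \times h^b$ of $\mathrm{Ch}(\bar X \times \bar X)$, carrying the linear classes $l_0, \dots, l_{i-1}$ but not $l_i$ (because $\witti{1}{p} = i$ exactly, anisotropy of $p$ ensuring that no $l_j$ is itself already rational over $F$). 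The second step is to apply the modulo-$2$ Steenrod operations $S^t$ on Chow groups --- available in characteristic $\neq 2$ by the work of Voevodsky and Brosnan, and functorial enough to carry rational cycles to rational cycles. Using the Cartan formula together with the Wu-type formulae on the split quadric, namely $S(h) = h + h^2$ on the hyperplane class and $S^t(l_k) = \binom{N + 1 - k}{t}\, l_{k-t}$ on the linear classes --- so that the coefficient $\binom{D - i}{t} = \binom{N - i + 2}{t}$ enters through $S^t(l_{i-1})$ --- one computes the cycles $S^t(\rho)$ and checks that the simultaneous rationality of all of them over $F$ is incompatible with the restrictions that $\witti{1}{p} = i$ imposes on rational correspondences on $\bar X \times \bar X$, unless $\binom{D - i}{t}$ is even for $1 \leq t \leq i - 1$. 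The main obstacle, I expect, is the bookkeeping of this second step: identifying $\rho$ precisely and tracking its Steenrod transforms cleanly enough that the congruences come out with no slack.

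\emph{Case (2): $\mathrm{char}(F) = 2$, $p$ quasilinear.} The geometric picture now degenerates --- the ``quadric'' of a quasilinear form is a non-reduced, purely inseparable scheme carrying none of the cycles used above, and no substitute for the Steenrod operations is presently known --- so I would argue purely algebraically, as in \cite{Scully3}. The relevant structure is $\mathbb{F}_2$-linear: writing $p \cong \form{a_1, \dots, a_D}$, one has $\witti{0}{p_K} = D - \mathrm{dim}_{K^2}(K^2 a_1 + \cdots + K^2 a_D)$ for every extension $K/F$, so the generic splitting of $p$ is governed by the lattice of $F^2$-subspaces spanned by subsets of the $a_i$ and, ultimately, by the quasi-Pfister forms into which scalar multiples of $p$ embed. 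Invoking the quasilinear subform and quasi-Pfister-neighbour theory of Hoffmann--Laghribi, in the sharpened form established in \cite{Scully3}, one pins down $\mathrm{dim}(\anispart{(p_{F(p)})})$ --- equivalently the integer $D - \witti{1}{p}$ --- in terms of the $2$-power degree $2^m = [\,F^2(a_2/a_1, \dots, a_D/a_1) : F^2\,]$ of the quasi-Pfister closure of $p$. Since the dimensions of quasi-Pfister forms, and of the successive layers of their defining filtrations, are powers of $2$, the needed divisibility is built into this description, and $\witti{1}{p} \leq 2^{v_2(D - \witti{1}{p})}$ follows. The obstacle here is the sharp form of that subform/divisibility input: lacking the algebraic-geometric leverage, one must wring it from a delicate combinatorial analysis of how $p$ sits inside its quasi-Pfister closure and of the anisotropic kernel $\anispart{(p_{F(p)})}$ --- which is the technical heart of \cite{Scully3}.
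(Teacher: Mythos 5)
Your two-track plan matches the sources the paper itself cites for this theorem (Karpenko \cite{Karpenko1} for characteristic $\neq 2$, Scully \cite{Scully3} for the quasilinear case), and the reformulation of $\witti{1}{p} \leq 2^{v_2(D - \witti{1}{p})}$ as the family of congruences $\binom{D - \witti{1}{p}}{t} \equiv 0 \pmod 2$ for $1 \leq t \leq \witti{1}{p} - 1$ is correct (a routine Lucas-theorem check), as is the Wu-type identity $S^t(l_{i-1}) = \binom{D - i}{t}\,l_{i-1-t}$ on the split quadric. So the overall architecture is right.

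That said, two caveats. First, in both cases your writeup is an outline that explicitly defers the hard step to the cited papers (``one computes $S^t(\rho)$ and checks that rationality is incompatible\ldots'', ``one must wring it from a delicate combinatorial analysis\ldots which is the technical heart of \cite{Scully3}''); it does not itself close the gap, which is acceptable for a survey of the proof strategy but should be recognized as such. Second, and more substantively, your description of the quasilinear case slightly misstates the mechanism. You suggest pinning down $D - \witti{1}{p}$ ``in terms of the $2$-power degree $2^m = [\,F^2(a_2/a_1, \dots, a_D/a_1) : F^2\,]$ of the quasi-Pfister closure of $p$,'' i.e.\ in terms of $\mathrm{lndeg}(p)$. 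There is no such closed formula. The actual lever --- which the present paper itself re-derives as Corollary~\ref{corp1div} via the subform decomposition of Theorem~\ref{thmsubformdec} --- is the divisibility statement $2^{\mathfrak{d}_1(p)} \geq \witti{1}{p}$: the first higher anisotropic kernel $p_1$ is divisible by a quasi-Pfister form of dimension at least $\witti{1}{p}$. Since $\mathrm{dim}(p_1) = D - \witti{1}{p}$, it follows that $D - \witti{1}{p}$ is divisible by a power of $2$ that is $\geq \witti{1}{p}$, which is exactly what is needed; but this power of $2$ is $2^{\mathfrak{d}_1(p)}$, not $2^{\mathrm{lndeg}(p)}$ (these can differ substantially, e.g.\ for generic odd-dimensional $p$ one has $\mathfrak{d}_1(p) = 0$). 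Replacing the norm degree by the divisibility index $\mathfrak{d}_1(p)$ is the precise form of the ``subform/divisibility input'' you correctly anticipated needing.
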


Here the notation $v_2(n)$ stands for the $2$-adic order of the integer $n$. It is worth remarking that while Hoffmann's conjecture is essentially wide open for non-quasilinear forms in characteristic 2, non-trivial partial results have been obtained by Hoffmann-Laghribi (\cite{HoffmannLaghribi2}) and also by Haution as a by-product of his efforts to develop the geometric machinery which is currently absent from the characteristic-2 setting (see \cite{Haution1},\cite{Haution2}).

Theorems \ref{thmKMT} and \ref{thmKarpenko} represent important landmarks for the theory of quadratic forms. Put together, they lead to the (highly applicable) conclusion that $\witti{0}{q_{F(p)}}$ is typically not much more than $\mathrm{dim}(q) - \mathrm{dim}(p)$. Geometrically speaking, this may be interpreted more concretely as the fact that an anisotropic quadric cannot be ``rationally compressed'' to another quadric of ``sufficiently lower'' dimension. This striking behaviour was already observed in the aforementioned work of Hoffmann, who proved a less refined variant of Theorem \ref{thmKMT} in which the integer $\mathrm{dim}(p) - \witti{1}{p}$ is replaced by the largest power of 2 strictly less than $\mathrm{dim}(p)$ (at least when $\mathrm{char}(F)\neq 2$ - see \cite{Hoffmann1}). It should be noted that this observation has since gone on to have a great influence on developments in closely related topics within the theory of algebraic groups (see, e.g., \cite{Karpenko3}).

The main aim of the current article is to present a new upper bound for the integer $\witti{0}{q_{F(p)}}$ which is, on the one hand, complementary to that of Karpenko-Merkurjev-Totaro, and, on the other, is a direct generalization of Hoffmann's conjecture on the possible values of the first higher isotropy index. As such, it both strengthens and unifies an important part of the existing literature. As per Theorem \ref{thmKarpenko}, however, we must limit our considerations to quasilinear forms when working in the characteristic-$2$ setting; nevertheless, the results which we do obtain give a firm indication that the bound should hold without restriction. Our main theorem is thus the following; as the reader will readily observe, Theorem \ref{thmKarpenko} above is nothing else but the special case in which $q=p$:

\begin{theorem} \label{thmmain} Let $p$ and $q$ be anisotropic quadratic forms of dimension $\geq 2$ over a field $F$, and let $s = v_2\big(\mathrm{dim}(p) - \witti{1}{p}\big)$. Assume that either
\begin{enumerate} \item The characteristic of $F$ is not $2$, or
\item The characteristic of $F$ is $2$ and $q$ is quasilinear. \end{enumerate}
Then 
$$ \witti{0}{q_{F(p)}} \leq \mathrm{max}\left(\mathrm{dim}(q) - \mathrm{dim}(p),2^s\right). $$
\end{theorem}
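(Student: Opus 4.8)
\medskip

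The plan is to reduce both cases to a statement about the generic behaviour of the isotropy index along a sequence of function fields of quadrics, and then exploit Theorem \ref{thmKarpenko} as a black box. The starting observation is that we may assume $\mathrm{dim}(q) - \mathrm{dim}(p) < 2^s$, since otherwise the bound is trivial; in particular $\mathrm{dim}(q) < \mathrm{dim}(p) + 2^s$, and we must show $\witti{0}{q_{F(p)}} \leq 2^s$. Set $d = \mathrm{dim}(p) - \witti{1}{p}$, so $s = v_2(d)$. The key auxiliary fact I would aim to establish is that after passing to a suitable ``generic'' field extension $L/F$ — concretely, the function field of the variety of totally isotropic subspaces of dimension $\witti{0}{q_{F(p)}}$ in the quadric $Q$, or an equivalent tower of quadric function fields — the form $p_L$ remains anisotropic and the form $p$ acquires, over $L$, a subform related to $q$ in a controlled way. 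This is exactly the type of ``generic splitting'' input that underlies Theorem \ref{thmKMT}: the existence of a rational map $P \dashrightarrow Q$ (or to the relevant Grassmannian) forces a containment of forms after the generic extension.

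\medskip

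In characteristic $\neq 2$, I would follow the algebraic-geometric route. The integer $\witti{0}{q_{F(p)}}$ being large means there is a rational map from $P$ to the $F(p)$-variety $\mathrm{Gr}$ of totally isotropic subspaces of $Q$ of that dimension; dually, working with the incidence correspondence and Chow groups (Rost's degree formula, Swan's computation of $CH^*$ of a quadric, and the Steenrod operations mod $2$), one obtains divisibility constraints on certain characteristic numbers. Concretely, I expect to run the Karpenko--Merkurjev cycle-theoretic argument but track a \emph{second} quadric: the ``complementary'' form $q'$ of dimension $\mathrm{dim}(p) - \witti{0}{q_{F(p)}}$ that sits inside $p$ generically. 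The point is that $q'$ has small anisotropic dimension, $\mathrm{dim}(p) - \witti{0}{q_{F(p)}} > \mathrm{dim}(p) - 2^s \geq d - 2^s + \witti{1}{p}$, and one leverages the fact that a quadric of this dimension, being generically a subquadric of $P$, must have $\witti{1}$ compatible with that of $p$; applying Theorem \ref{thmKarpenko} to the relevant form then pins $\witti{0}{q_{F(p)}}$ below $2^s$. The delicate part here is bookkeeping the Chow-theoretic obstruction so that it sees the $2$-adic valuation $s$ rather than merely the largest power of $2$ below $\mathrm{dim}(p)$, i.e. recovering the sharper Izhboldin-type denominator; this is where the refinement over Hoffmann's original bound genuinely enters, and I anticipate it will require a careful choice of the cycle (likely a pushforward of a power of the hyperplane class on the incidence variety) whose degree is divisible by exactly $2^{\mathrm{something}(s)}$.

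\medskip

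In characteristic $2$ with $q$ quasilinear, I would abandon geometry and argue purely algebraically, in the spirit of \cite{Scully3}. Here $q_{F(p)}$ being isotropic with large index translates, via the theory of quasilinear quadratic forms (norm fields, the quasi-Pfister ``companion'' forms, and the explicit description of $\witti{0}$ over function fields due to Hoffmann--Laghribi), into a divisibility statement: the anisotropic part of $q_{F(p)}$ has dimension $\mathrm{dim}(q) - \witti{0}{q_{F(p)}}$, and I would show that $p$ embeds generically into a form built from $q$ and a quasi-Pfister form whose foldover structure is governed by $d = \mathrm{dim}(p) - \witti{1}{p}$. Then the quasilinear analogue of the subform/division theorem forces $\mathrm{dim}(p) - \witti{1}{p}$ to divide (in the appropriate valuation-theoretic sense) an expression controlling $\witti{0}{q_{F(p)}}$, and the inequality $\witti{0}{q_{F(p)}} \leq 2^s$ drops out by the same $2$-adic arithmetic as in Theorem \ref{thmKarpenko}(2). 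The main obstacle in this case is proving the requisite generic subform statement: one needs that an anisotropic quasilinear $p$ with $p_{F(q)}$ ``not too isotropic'' is, after the generic extension splitting off the isotropic part of $q$, dominated by $q$ together with a quasi-Pfister multiple — essentially a relative version of the results of \cite{Scully3}, and getting the quasi-Pfister exponent exactly right (so that it produces $2^s$ and not a weaker power of $2$) is the crux.

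<br>

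\medskip

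Across both cases, the single hardest point I foresee is the same: extracting from the hypothesis $\witti{0}{q_{F(p)}} > \mathrm{dim}(q) - \mathrm{dim}(p)$ a usable generic relationship between $p$ and $q$ that is \emph{fine enough} to feel the exact $2$-adic valuation $s = v_2(\mathrm{dim}(p) - \witti{1}{p})$, rather than the coarser ``largest power of $2$ below $\mathrm{dim}(p)$'' that Hoffmann's method yields. In the $\mathrm{char} \neq 2$ case this is a Chow-group/Steenrod-operation subtlety; in the quasilinear case it is a question of choosing the right quasi-Pfister companion. Once that relationship is in hand, invoking Theorem \ref{thmKarpenko} finishes the argument in a few lines.
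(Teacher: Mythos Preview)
Your proposal is a strategy outline rather than a proof, and in the characteristic $\neq 2$ case it is pointed in a direction that does not match the paper and, as stated, has a genuine gap. You propose to produce a ``complementary form $q'$ of dimension $\mathrm{dim}(p) - \witti{0}{q_{F(p)}}$ that sits inside $p$ generically'' and then apply Theorem~\ref{thmKarpenko} to it; but there is no mechanism in the Karpenko--Merkurjev cycle-theoretic setup that hands you such a subform of $p$, and degree-formula arguments of the Rost type only see the coarser power of $2$ (this is exactly why Hoffmann's original bound stops where it does). The paper does \emph{not} proceed this way. Instead, assuming the bound fails and reducing by induction to $\mathfrak{i} = 2^s + 1$, it shows via the Knebusch tower of $q$ and the Karpenko--Merkurjev stable-birationality criterion (Theorem~\ref{thmKMcriterion}) that $\witti{0}{p_K} > 0 \Leftrightarrow \witti{0}{q_K} \geq \mathfrak{i}$ for every $K$; by Vishik's Theorem~\ref{thmvishiksummands} this makes $U(P)\{2^s\}$ a direct summand of $M(Q)$ disjoint from $U(Q)$. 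The contradiction then comes from Vishik's \emph{excellent connections} theorem (Theorem~\ref{thmexcellentconnections}), which pins down specific Tate motives inside $U(P)_{\overline{F}}$ and $U(Q)_{\overline{F}}$ via the alternating $2$-expansion of $\mathrm{dim}(p) - \witti{1}{p}$ and $\mathrm{dim}(q) - \witti{1}{q}$; a short arithmetic lemma shows both contain $\mathbb{Z}\{m\}$ with $m = (\mathrm{dim}(p) - \witti{1}{p} + 2^s)/2$. Vishik's theorem is precisely the ingredient that converts the coarse Hoffmann-type information into the exact $2$-adic valuation $s$, and it is entirely absent from your sketch; without it (or a substitute of comparable strength) I do not see how your Chow-theoretic bookkeeping can detect $s$ rather than just $\lfloor \log_2(\mathrm{dim}(p)-1)\rfloor$.

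In the quasilinear case your instincts are closer, but again the decisive structural statement is missing. The paper proves (Theorem~\ref{thmsubformdec}) that, up to a purely transcendental base change, there is an anisotropic form $\tau$ over $F(p)$ with $\mathrm{dim}(\tau) = \witti{0}{q_{F(p)}}$ and $\anispart{(p_1 \otimes \tau)} \subset \anispart{(q_{F(p)})}$; this is established by an explicit polynomial-manipulation argument over $L[T]/(p'(T))$ generalizing the Cassels--Pfister technique. From this, the bound follows in a couple of lines: if $\witti{0}{q_{F(p)}} > \mathrm{dim}(q) - \mathrm{dim}(p)$ then dimension-counting forces $\anispart{(p_1 \otimes \tau)}$ to be similar to $p_1$, whence (Corollary~\ref{cordivbynormform}) $p_1$ is divisible by $\normform{\tau}$, a quasi-Pfister form of dimension $\geq \mathrm{dim}(\tau) = \witti{0}{q_{F(p)}}$, so $\witti{0}{q_{F(p)}} \leq 2^{\mathfrak{d}_1(p)} \leq 2^s$. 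Your sketch has the embedding going the wrong way (``$p$ embeds generically into a form built from $q$'') and never isolates $p_1 \otimes \tau$; getting this subform-of-$\anispart{(q_{F(p)})}$ statement, with $\tau$ of the \emph{correct} dimension, is the entire content of the argument, and it does not fall out of norm-field generalities.
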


Although we restrict to quasilinear forms in characteristic $2$, it is worth stressing that it is the proof of this case of Theorem \ref{thmmain} which is, to the author's mind, the most striking part of the paper. Furthermore, contrary to the more traditional order of events, the characteristic $\neq 2$ case of the theorem was in fact \emph{preceded} by the quasilinear case; indeed, the stated bound was only discovered by way of a conceptual trivialization established in the quasilinear setting. This is an effective advertisement for the study of quasilinear quadratic forms; intrinsic interest aside, the quasilinear case can sometimes serve as a guiding light for investigations into the general theory.

To give a another illustration of the unifying power of Theorem \ref{thmmain}, we now show how it may be used to give a short proof of a well-known and deep result in characteristic $\neq 2$ which was originally conjectured by Knebusch in \cite{Knebusch2} and proven by Fitzgerald in \cite{Fitzgerald};\footnote{An analogous result holds for quasilinear forms in characteristic 2 (see \cite[Prop. 1.7]{Laghribi2}), and this may also be deduced from Theorem \ref{thmmain} in a similar way.} by contrast, this result does \emph{not} follow directly from Theorems \ref{thmKMT} and \ref{thmKarpenko}.

\begin{example} (Fitzgerald, see \cite[Thm. 1.6]{Fitzgerald}) Let $p$ and $q$ be anisotropic quadratic forms of dimension $\geq 2$ over a field $F$ of characteristic $\neq 2$. If $q$ becomes hyperbolic over $F(p)$ $\big($i.e., $\witti{0}{q_{F(p)}} = \frac{1}{2}\mathrm{dim}(q)\big)$, and $\mathrm{dim}(p) > \frac{1}{2}\mathrm{dim}(q)$, then $q$ is similar to a Pfister form.

\begin{proof} Fitzgerald's original proof involved rather subtle manipulations of the Cassels-Pfister subform theorem (\cite[Thm. 22.5]{EKM}), among other standard results from the classical algebraic theory of quadratic forms. Given Theorem \ref{thmmain}, however, the statement follows in a straightforward manner. Indeed, since
$$ \witti{0}{q_{F(p)}} = \frac{1}{2}\mathrm{dim}(q) > \mathrm{dim}(q) - \mathrm{dim}(p) $$
by hypothesis, the theorem tells us in this case that
$$ \mathrm{dim}(q) = 2\witti{0}{q_{F(p)}} \leq 2^{s+1},$$ 
where $s = v_2\big(\mathrm{dim}(p) - \witti{1}{p}\big)$. Now, as $q$ becomes hyperbolic over $F(p)$, the Cassels-Pfister subform theorem implies that $\mathrm{dim}(p) \leq \mathrm{dim}(q)$. By the definition of $s$, we therefore have
\begin{equation} \label{fitz}  2^s < \mathrm{dim}(p),\mathrm{dim}(q) \leq 2^{s+1}. \end{equation}
Suppose now that $q$ is \emph{not} similar to a Pfister form. Then there exists an extension $L$ of $F$ such that (i) $q_L$ is isotropic and (ii) its anisotropic kernel $\anispart{(q_L)}$ \emph{is} similar to a Pfister form (for example, we can take $L$ to be the penultimate entry in the generic splitting tower of $q$ -- see \cite[\S 25]{EKM}). But $\anispart{(q_L)}$ becomes hyperbolic over $L(p_L)$, and so another application of the Cassels-Pfister subform theorem, together with \eqref{fitz}, shows that
$$ 2^s < \mathrm{dim}\big(\anispart{(q_L)}\big) < 2^{s+1}.$$
Since $\anispart{(q_L)}$, being similar to a Pfister form, has dimension equal to a power of $2$, this is not possible. We thus conclude that $q$ is similar to a Pfister form, as desired. \end{proof} \end{example}

The characteristic $\neq 2$ case of Theorem \ref{thmmain} is proved in \S \ref{secchar0} below (in fact, we get a slightly more refined assertion -- see Theorem \ref{thmmaininchar0}). The argument makes use of the theory of Chow correspondences in a similar way to the proofs of the aforementioned results of Karpenko-Merkurjev-Totaro and Karpenko, though we present it using the more conceptual language of Chow \emph{motives}. Here we rely crucially on the most recent advance in the study of motivic decompositions of quadrics due to Vishik (\cite{Vishik5}). Since Vishik's work involves heavy use of the aforementioned Steenrod operations on Chow groups modulo 2, this approach to Theorem \ref{thmmain} is certainly limited to characteristic $\neq 2$ at present. 

In order to treat the case where $\mathrm{char}(F) = 2$ and $q$ is quasilinear, we are therefore forced to proceed in an entirely different manner. Here we follow the pattern of ideas developed in \cite{Scully3}. Primary to Theorem \ref{thmmain}, our main result in this setting concerns a certain structural decomposition of the anisotropic kernel of the form $q_{F(p)}$ -- see Theorem \ref{thmsubformdec} below. This result (which may be viewed as a generalization of \cite[Thm. 5.1]{Scully3}) takes place on the level of quadratic forms themselves, and the quasilinear part of Theorem \ref{thmmain} emerges as nothing more than a dimension-theoretic consequence of its validity (again, we actually get a slightly better statement -- see Corollaries \ref{cormainboundquasilinear} and \ref{corquasilinearmainrefined}). Most interestingly, the quasilinear cases of Theorem \ref{thmKMT} and \ref{thmKarpenko} are also immediate dimension-theoretic consequences of Theorem \ref{thmsubformdec}, as are some additional new results which seem to have no known analogues in the characteristic $\neq 2$ theory, conjectural or otherwise (see \S \ref{secquasilinearapplications}). We are therefore left with a conceptually satisfying unification of several important statements in this setting, both old and new. Taking all of this into account, it would be interesting to know whether this algebraic approach to the isotropy problem (and Theorem \ref{thmsubformdec} in particular) admits some kind of generalization beyond the quasilinear case.

For the convenience of the reader, the proofs of the two cases of Theorem \ref{thmmain} are each preceded by a separate section of preliminaries particular to the relevant situation. Definitions, basic concepts and results common to both are given in the next section. 

Finally, we remark that, just as in \cite{Scully3}, the proofs of our main results on quasilinear quadratic forms readily generalize to give analogous results for quasilinear $p$-forms of any prime degree $p$ (i.e., Fermat-type forms of degree $p$ in characteristic $p$). For the sake of transparency, however, we refrain from working in this generality here. \\

\noindent {\bf Terminology.} In this text, the word \emph{scheme} means a scheme of finite-type over a field. The word \emph{variety} then means an integral scheme. 

\section{Diagonalizable quadratic forms} 

Let $F$ be a field and $V$ a finite-dimensional $F$-vector space. By a \emph{quadratic form on $V$}, we mean a map $q \colon V \rightarrow F$ such that (i) $q(\lambda v) = \lambda^2 q(v)$ for all $(\lambda,v) \in F \times V$, and (ii) the associated ``polar form'' $b_q \colon V \times V \rightarrow F$ given by $b_q(v,w) = q(v+w) - q(v) -q(w)$ is $F$-bilinear. If the $F$-linear map from $V$ to its dual space given by $v \mapsto b_q(v,-)$ is bijective, then we say that $q$ is \emph{non-degenerate}. The present article is concerned with quadratic forms which are \emph{diagonalizable}, in the sense that they arise as the diagonal part of a symmetric bilinear form over their field of definition. More precisely, $q$ is said to be \emph{diagonalizable} if there exists a symmetric bilinear form $b$ on $V_q$ such that $q(v) = b(v,v)$ for all $v \in V_q$. If $\mathrm{char}(F) \neq 2$, then every quadratic form over $F$ has this property; indeed, we can (and must) take $b = \frac{1}{2}b_q$ in this case. By contrast, if $\mathrm{char}(F) = 2$, then $q$ is diagonalizable if and only if it is \emph{quasilinear}, i.e., if and only if  $q(v+w) = q(v) + q(w)$ for all $v,w \in V_q$. As such our discussion will concern the entire theory of quadratic forms over fields of characteristic $\neq 2$, but only the quasilinear part of the characteristic-$2$ theory. \newline

\noindent {\bf Terminology.} In this section, by a \emph{quadratic form over $F$} (or simply a \emph{form over $F$}), we shall mean a \emph{diagonalizable} quadratic form on some finite-dimensional $F$-vector space.
\vspace{1 \baselineskip} 

We now recall some basic concepts and results concerning diagonalizable quadratic forms over arbitrary fields. The reader is referred to \cite{EKM} for a comprehensive discussion.

\subsection{Basic concepts} \label{secbasic} Let $q$ be a quadratic form over $F$. The $F$-vector space on which $q$ is defined will be denoted by $V_q$. The dimension of this vector space will be called the \emph{dimension} of $q$, and will be denoted by $\mathrm{dim}(q)$. The set $\lbrace q(v)\;|\;v \in V_q \rbrace$ consisting of all elements of $F$ represented by $q$ will be denoted by $D(q)$. Given a field extension $L$ of $F$, there is a unique quadratic form on the $L$-vector space $V_q \otimes_F L$ which extends $q$ and whose polar form extends $b_q$. We denote this form by $q_L$. If $R$ is a subring of $L$ containing $F$, then $D(q_R)$ will denote the subset $\lbrace q(w)\;|\;w \in V_q \otimes_F R \rbrace$ of $D(q_L) \cap R$. Given $a \in F^*$, we will write $aq$ for the quadratic form $v \mapsto aq(v)$ on the vector space $V_q$.

Let $p$ be another quadratic form over $F$. If there exists a bijective $F$-linear map $f \colon V_p \rightarrow V_q$ such that $q\big(f(v)\big) = p(v)$ for all $v \in V_p$, then we will say that $p$ and $q$ are \emph{isometric}, and write $p \simeq q$. If $p \simeq aq$ for some $a \in F^*$, then we will say that $p$ and $q$ are \emph{similar}. An element $a \in F^*$ for which $aq \simeq q$ is said to be a \emph{similarity factor} of $q$. The set $G(q)^* = \lbrace a \in F^*\;|\;aq \simeq q \rbrace$ of all similarity factors of $q$ is evidently a subgroup of $F^*$. We will write $G(q)$ for the union $G(q)^* \cup \lbrace 0 \rbrace$. Note that we have $F^2 \subseteq G(q)$.

The assignment $v + w \mapsto p(v) + q(w)$ \big($(v,w) \in V_p \times V_q$\big) defines a quadratic form $p \perp q$ on $V_p \oplus V_q$ called the \emph{orthogonal sum} of $p$ and $q$. Given a positive integer $n$, $n \cdot q$ will denote the orthogonal sum of $n$ copies of $q$ (note that $n\cdot q \neq nq$). If there exists a quadratic form $r$ over $F$ such that $q \simeq p \perp r$, then we will say that $p$ is a \emph{subform} of $q$. 

The \emph{tensor product} $p \otimes q$ is the unique quadratic form on $V_p \otimes V_q$ such that (i) the polar form of $p \otimes q$ is given by the Kronecker product $b_p \otimes b_q$, and (ii) for all $(v,w) \in V_p \times V_q$, $p \otimes q$ maps the element $v \otimes w$ to $p(v)q(w)$. If there exists a quadratic form $r$ over $F$ such that $q \simeq p \otimes r$, then we will say that $q$ is \emph{divisible} by $p$.

Given elements $a_1,\hdots,a_n \in F$, we write $\form{a_1,...,a_n}$ for the quadratic form $(\lambda_1,\hdots,\lambda_n) \mapsto \sum_{i=1}^n a_i\lambda_i^2$ on the $F$-vector space $F^{\oplus n}$. Every quadratic form over $F$ (in the sense we are considering) is clearly isometric to one of this type.

A vector $v \in V_q$ is said to be \emph{isotropic} (with respect to $q$) if $q(v) = 0$. An $F$-linear subspace of $V_q$ consisting entirely of isotropic vectors is said to be \emph{totally isotropic}. We will say that $q$ is \emph{isotropic} if $V_q$ contains a non-zero isotropic vector, and \emph{anisotropic} otherwise. Equivalently, $q$ is anisotropic if the projective quadric $Q = \lbrace q = 0\rbrace \subset \mathbb{P}(V_q)$ has no $F$-rational points. If $\mathrm{char}(F) \neq 2$, then this implies that $Q$ is smooth (\cite[Prop. 22.1]{EKM}). By contrast, if $\mathrm{char}(F) = 2$ and $q$ is non-zero, then $Q$ is \emph{totally singular} (in the sense that it has no smooth points at all), irrespective of whether $q$ is anisotropic or not. The \emph{isotropy index} of $q$, denoted $\mathfrak{i}_0(q)$, is defined to be the maximal dimension of a totally isotropic subspace of $V_q$. If $\mathrm{char}(F) \neq 2$, then $\mathfrak{i}_0(q)$ is more commonly known as the \emph{Witt index} of $q$. In any characteristic, the integer $\mathfrak{i}_0(q)$ is insensitive to making purely transcendental extensions of the base field (see \cite[Lem. 7.15]{EKM}).

\subsection{Decomposition of isotropic forms} \label{secisodecomp} To any quadratic form $q$ over $F$, one may associate an anisotropic quadratic form $\anispart{q}$ over $F$ known as the \emph{anisotropic part} of $q$. The precise nature of this form depends on the characteristic of $F$.

If $\mathrm{char}(F) \neq 2$, then the assignment $(x,y) \mapsto xy$ defines a quadratic form $\mathbb{H}$ on the $F$-vector space $F^{\oplus 2}$ known as the \emph{hyperbolic plane} over $F$. Up to isometry, $\mathbb{H}$ is the unique non-degenerate isotropic form of dimension 2 over $F$. If, in this case, $q$ is non-degenerate, then the \emph{Witt decomposition theorem} (see \cite[Thm. 8.5]{EKM}) characterizes $\anispart{q}$ as the unique anisotropic form over $F$ (up to isometry) such that $q \simeq \mathfrak{i}_0(q) \cdot \mathbb{H} \perp \anispart{q}$. Note, in particular, that we have $\mathrm{dim}(\anispart{q}) = \mathrm{dim}(q) -2\witti{0}{q}$ in this situation.

If $\mathrm{char}(F) = 2$ (so that $q$ is quasilinear), then the additivity of $q$ implies that the set $U$ of all isotropic vectors in $V_q$ is, in fact, an $F$-linear subspace of $V_q$. In particular, the isotropy index of $q$, $\witti{0}{q}$, is nothing else but the dimension of $U$. In this case, the form $\anispart{q}$ may be defined as the restriction of $q$ to the quotient space $V_q/U$. Up to isometry, $\anispart{q}$ is then the unique anisotropic form over $F$ such that $q \simeq \witti{0}{q} \cdot \form{0} \perp \anispart{q}$. From another point of view, the additivity of $q$ also implies that the set $D(q)$ consisting of all elements of $F$ represented by $q$ is an $F^2$-linear subspace of $F$. The form $\anispart{q}$ may then also be characterised up to isometry as the unique anisotropic form over $F$ such that $D(\anispart{q}) = D(q)$ (see Proposition \ref{propsubforms} below). It is important to note that in this setting we have $\mathrm{dim}(\anispart{q}) = \mathrm{dim}(q) - \witti{0}{q}$, as opposed to the aforementioned $\mathrm{dim}(\anispart{q}) = \mathrm{dim}(q) - 2\witti{0}{q}$ which prevails for non-degenerate forms in characteristic $\neq 2$.

Nevertheless, in all characteristics, we see that the dimension of $\anispart{q}$ measures the extent to which $q$ is isotropic. If $\mathrm{dim}(\anispart{q}) \leq 1$, then we say that $q$ is \emph{split}. Note that if $F$ is algebraically closed, then every quadratic form over $F$ is split. Moreover, if $\mathrm{char}(F) = 2$, then one only requires that $F$ be perfect in order to make the same conclusion. For example, there are no non-split forms over a finite field of characteristic 2.

\subsection{Function fields of quadrics} \label{secfunctionfields} Let $q$ be a quadratic form over $F$. If $q$ is not split, then it is irreducible as an element of the symmetric algebra $S(V_q^*)$, and so the projective quadric $Q$ is an integral $F$-scheme, as is its affine cone $\lbrace q = 0 \rbrace \subset \mathbb{A}(V_q)$ (see \cite[Ch. IV]{EKM}) In this case, we will write $F(q)$ for the function field of the former and $F[q]$ for that of the latter. The field $F[q]$ is of course a degree-1 purely transcendental extension of $F(q)$. If $L$ is a field extension of $F$, then we will simply write $L(q)$ instead of $L(q_L)$ whenever it is defined. Note that if $q \simeq \form{a_0,a_1,\hdots,a_n}$ for some $a_i \in F$ with $a_0,a_1 \neq 0$, then we have $F$-isomorphisms
$$F(q) \simeq F(S)\bigg(\sqrt{a_0^{-1}(a_1 + a_2S_2^2 + \cdots + a_nS_n^2)}\bigg)$$
and
$$ F[q] \simeq \mathrm{Frac}\big(F[T]/(a_0T_0^2 + \cdots +a_nT_n^2)\big) \simeq F(U)\left(\sqrt{a_0^{-1}(a_1U_1^2 + \cdots + a_nU_n^2)}\right),$$
where $S = (S_2,\hdots,S_n)$, $T = (T_0,\hdots,T_n)$ and $U = (U_1,\hdots,U_n)$ are systems of algebraically independent variables over $F$. Evidently, the form $q_{F(q)}$ is isotropic. 

\subsection{The Knebusch splitting pattern} \label{secsplittingtower} Let $q$ be a non-zero quadratic form over $F$. In \cite{Knebusch1}, Knebusch introduced the following construction (at least in the case where $\mathrm{char}(F) \neq 2$): Set $F_0 = F$, $q_0 = \anispart{q}$, and recursively define
\begin{itemize} \item $F_r = F_{r-1}(q_{r-1})$ (provided $q_{r-1}$ is not split), and
\item $q_r = \anispart{(q_{F_r})}$ (provided $F_r$ is defined). \end{itemize}
Note that if $q_r$ is defined, then $\mathrm{dim}(q_r) < \mathrm{dim}(q_{r-1})$, since an anisotropic quadratic form becomes isotropic over the function field of the quadric which it defines. As such, Knebusch's  process is finite, terminating at the first non-negative integer $h(q)$ for which $q_{h(q)}$ is split. The integer $h(q)$ is called the \emph{height} of $q$, and the tower of fields $F= F_0 \subset F_1 \subset \cdots \subset F_{h(q)}$ is called the \emph{Knebusch splitting tower} of $q$. For each $0 \leq r \leq h(q)$, we set $\wittj{r}{q} = \witti{0}{q_{F_r}}$. If $q$ is not split and $r \geq 1$, then the integer $\wittj{r}{q} - \wittj{r-1}{q}$ will be called the \emph{r-th higher isotropic index} of $q$, and will be denoted by $\witti{r}{q}$. In this case, the form $q_r$ will be called the \emph{r-th higher anisotropic kernel} of $q$. When $\mathrm{char}(F) \neq 2$, the tower $F = F_0 \subset F_1 \subset \cdots \subset F_{h(q)}$ is usually referred to as the \emph{generic splitting tower} of $q$, as it is generic (in a valuation-theoretic sense) among all towers of extensions of $F$ which ultimately split $q$. In particular, if $K$ is any field extension of $F$, then there exists in this case an integer $r \in [0,h(q)]$ such that $\witti{0}{q_K} = \witti{0}{q_{F_r}}$. This is easily deduced from the following more specific statement:

\begin{lemma}[{see \cite[\S 5]{Knebusch1}}] \label{lemgenerictower} Let $q$ be a quadratic form over a field $F$ of characteristic $\neq 2$ with Knebusch splitting tower $F = F_0 \subset F_1 \subset \cdots \subset F_{h(q)}$, and let $r$ be an integer in $[0,h(q)-1]$. If $K$ is a field extension of $F$ such that $\witti{0}{q_K} > \wittj{r}{q}$, then the free compositum $K \cdot F_{r+1}$ is a purely transcendental extension of $K$. \end{lemma}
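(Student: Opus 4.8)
The plan is to argue by induction on $r$, reducing the whole assertion to a single geometric fact about function fields of anisotropic quadrics. First I would make the harmless reduction to the case where $q$ is anisotropic over $F$: replacing $q$ by $\anispart{q}$ changes neither the tower $F_0 \subset F_1 \subset \cdots \subset F_{h(q)}$ nor the higher anisotropic kernels $q_r$ (these are built from $q_0 = \anispart{q}$ in any case), while it shifts each of $\wittj{r}{q}$ and $\witti{0}{q_K}$ by the same constant $\witti{0}{q}$; so both the hypothesis $\witti{0}{q_K} > \wittj{r}{q}$ and the conclusion are unaffected. If $\mathrm{dim}(q) \leq 1$ then $h(q) = 0$ and there is nothing to prove, so I may assume $q$ is anisotropic and not split, whence $F_1 = F(q)$ is defined.

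The crucial ingredient is the following sublemma: \emph{if $\phi$ is an anisotropic form of dimension $\geq 2$ over a field $E$ with $\mathrm{char}(E) \neq 2$, and $L/E$ is an extension over which $\phi$ becomes isotropic, then the free compositum $L \cdot E(\phi)$ is a purely transcendental extension of $L$.} When $\mathrm{dim}(\phi) \geq 3$ this is the standard fact that a smooth projective quadric of positive dimension with a rational point is rational: as $\phi$ is anisotropic and $\mathrm{char}(E) \neq 2$ it is nondegenerate, so the quadric $\Phi$, and hence $\Phi_L$, is smooth; the isotropy of $\phi_L$ supplies an $L$-rational (necessarily smooth) point, and stereographic projection from it exhibits $\Phi_L$ as $L$-rational (see \cite{EKM}); since $\Phi$ is geometrically integral one has $L(\Phi_L) = L \cdot E(\phi)$, which is therefore purely transcendental over $L$. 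When $\mathrm{dim}(\phi) = 2$, $E(\phi)$ is the separable quadratic field extension $E\big(\sqrt{-\mathrm{det}(\phi)}\big)$ of $E$, and the isotropy of $\phi_L$ means precisely that $-\mathrm{det}(\phi) \in L^{*2}$, so that $E(\phi)$ embeds $E$-linearly into $L$ and the free compositum $L \cdot E(\phi)$ is simply $L$; the conclusion is then trivial.

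Granting the sublemma, I would induct on $r \in [0, h(q)-1]$. For $r = 0$ the hypothesis says $q_K$ is isotropic, so the sublemma with $E = F$, $\phi = q$, $L = K$ gives that $K \cdot F_1 = K \cdot F(q)$ is purely transcendental over $K$. For the inductive step, let $r \geq 1$ and $\witti{0}{q_K} > \wittj{r}{q}$. Since $\witti{s}{q} > 0$ for $1 \leq s \leq h(q)$ (an anisotropic form always becomes isotropic over the function field of its own quadric), we have $\wittj{r-1}{q} < \wittj{r}{q}$, hence $\witti{0}{q_K} > \wittj{r-1}{q}$, and the inductive hypothesis gives that $K \cdot F_r$ is purely transcendental over $K$. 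Now I apply the sublemma over $E = F_r$ to $\phi = q_r$ (anisotropic over $F_r$ of dimension $\geq 2$, because $r \leq h(q)-1$) and the extension $L = K \cdot F_r$. To verify its hypothesis, note that $q_{F_r} \simeq \wittj{r}{q} \cdot \mathbb{H} \perp q_r$ by definition of $q_r$, so $\witti{0}{q_{K \cdot F_r}} = \wittj{r}{q} + \witti{0}{(q_r)_{K \cdot F_r}}$; and since $K \cdot F_r$ is purely transcendental over $K$ and $\witti{0}{-}$ is insensitive to such extensions \cite[Lem. 7.15]{EKM}, $\witti{0}{q_{K \cdot F_r}} = \witti{0}{q_K}$. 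Thus $\witti{0}{(q_r)_{K \cdot F_r}} = \witti{0}{q_K} - \wittj{r}{q} > 0$, so $(q_r)_{K \cdot F_r}$ is isotropic, and the sublemma gives that $(K \cdot F_r) \cdot F_r(q_r)$ is purely transcendental over $K \cdot F_r$. As $F_{r+1} = F_r(q_r) \supseteq F_r$, that compositum equals $K \cdot F_{r+1}$, and transitivity of purely transcendental extensions completes the induction.

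The substance of the argument lies entirely in the sublemma, whose only delicate point is the case $\mathrm{dim}(\phi) = 2$: there the quadric $\Phi$ is zero-dimensional and the ``rational point forces rationality'' mechanism must be replaced by the direct observation that the quadratic extension defining $E(\phi)$ collapses into any field over which $\phi$ becomes isotropic. Everything else — the Witt-decomposition bookkeeping, the strict monotonicity of the $\wittj{r}{q}$, and the transitivity of purely transcendental extensions — is routine; the one remaining point needing a little care is the correct interpretation of the free compositum (and verifying that the relevant tensor products are domains, resp.\ that $E(\phi)$ already embeds into $L$) when some intermediate higher anisotropic kernel happens to be binary.
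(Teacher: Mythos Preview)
Your proof is correct and follows the same route the paper indicates: the paper does not give a detailed argument but simply notes that the lemma ``follows from the fact that any smooth projective quadric which admits a rational point is a rational variety (\cite[Prop. 22.9]{EKM}),'' which is exactly your sublemma, and your induction on $r$ is the natural way to unwind this into a proof for the full tower. One small remark: your closing caveat about ``some intermediate higher anisotropic kernel'' being binary is unnecessary, since if $\mathrm{dim}(q_i) = 2$ then $q_{i+1}$ is automatically split and hence $i = h(q)-1$; the binary case can therefore only occur at the final step, which you have already handled cleanly.
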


Lemma \ref{lemgenerictower} follows from the fact that any smooth projective quadric which admits a rational point is a rational variety (\cite[Prop. 22.9]{EKM}). This is not the case for totally singular quadrics (see \cite[Rem. 7.4 (iii)]{Hoffmann2}), whence the characteristic restriction.

\subsection{Pfister and quasi-Pfister forms} \label{secPfister} Let $n$ be a positive integer. Given $a_1,\hdots,a_n \in F^*$, we write $\pfister{a_1,\hdots,a_n}$ for the $2^n$-dimensional quadratic form over $F$ defined as the $n$-fold tensor product $\form{1,-a_1} \otimes \cdots \otimes \form{1,-a_n}$. If $\mathrm{char}(F) \neq 2$, then forms of this type are known as (\emph{n-fold}) \emph{Pfister forms}. If $\mathrm{char}(F) =2$, they are more commonly referred to as (\emph{n-fold}) \emph{quasi-Pfister forms} (in order to distinguish them from the symmetric bilinear forms which also bear Pfister's name). In either case, any such form $\pi$ is \emph{round} (or \emph{multiplicative}), meaning that $G(\pi) = D(\pi)$. If $\mathrm{char}(F) = 2$, then the only round anisotropic forms over $F$ are those which are quasi-Pfister (\cite[Prop. 7.14]{Hoffmann2}). By contrast, if $\mathrm{char}(F) \neq 2$, there can be non-Pfister anisotropic round forms; in this case, the anisotropic Pfister forms over $F$ are precisely those anisotropic forms $\pi$ which are \emph{strongly multiplicative} in the sense that $G(\pi_K) = D(\pi_K)$ for \emph{every} field extension $K$ of $F$ (see \cite[Theorem 23.2]{EKM}).

\section{Some preliminaries in characteristic $\neq 2$} We begin our investigations with the case of fields of characteristic different from 2. Here we will make use of a certain interplay which exists in this setting between the Knebusch splitting pattern of an anisotropic quadratic form and some discrete motivic invariants of its associated quadric. We therefore begin by recalling some basic concepts and results concerning this interaction. For the remainder of this section, $F$ will denote an arbitrary field of characteristic $\neq 2$.

\subsection{Motivic decomposition type and upper motives} \label{secmotivic} For readable introductions to the theory of Chow motives, with particular emphasis on motives of quadrics, the reader is referred to \cite{Vishik4} or \cite{EKM}. If $k$ is a field, then we write $\textsf{\textsl{Chow}}(k)$ for the additive category of Grothendieck-Chow motives over $k$ with integral coefficients (as defined, for example, in \cite[\S 1]{Vishik4}). If $X$ is a smooth projective variety over $k$, then we will write $M(X)$ to denote its motive as an element of $\textsf{\textsl{Chow}}(k)$. In the special case where $X = \mathrm{Spec}(k)$, we simply write $\mathbb{Z}$ instead of $M(X)$, thus suppressing its dependency on the base field $k$. Given an integer $i$ and an object $M$ of $\textsf{\textsl{Chow}}(k)$, we will write $M\lbrace i \rbrace$ for the $i$-th Tate twist of $M$. In particular, $\mathbb{Z} \lbrace i \rbrace$ will denote the Tate motive with shift $i$ in $\textsf{\textsl{Chow}}(k)$. If $K$ is a field extension of $k$, we write $M_K$ to denote the image of an object $M$ in $\textsf{\textsl{Chow}}(k)$ under the natural scalar extension functor $\textsf{\textsl{Chow}}(k) \rightarrow \textsf{\textsl{Chow}}(K)$. 

Now, let $X$ be a smooth projective quadric of dimension $n \geq 1$ over our fixed field $F$ of characteristic $\neq 2$. By a result of Vishik (see \cite{Vishik1} or \cite[\S 3]{Vishik4}), any direct summand of $M(X)$ decomposes into a finite direct sum of indecomposable objects in $\textsf{\textsl{Chow}}(F)$. Furthermore, this decomposition is unique up to reordering of the summands. In the case where $X$ is split (i.e., $X$ is the vanishing locus of a split quadratic form over $F$), the full decomposition of $M(X)$ was obtained by Rost (\cite{Rost}), who showed that $M(X)$ decomposes into a certain direct sum of Tate motives. Since every smooth projective quadric over an algebraically closed field is split, it follows that $M(X_{\overline{F}})$ decomposes into a direct sum of Tate motives in $\textsf{\textsl{Chow}}(\overline{F})$, where $\overline{F}$ denotes a fixed algebraic closure of $F$. The precise statement is the following:
\begin{equation} \label{motdec} M(X_{\overline{F}}) \simeq \begin{cases} \bigoplus_{i=0}^n \mathbb{Z} \lbrace i \rbrace & \text{ if $n$ is odd} \\
                       \left(\bigoplus_{i=0}^n \mathbb{Z} \lbrace i \rbrace\right) \oplus \mathbb{Z}\lbrace \frac{n}{2} \rbrace & \text{ if $n$ is even} \end{cases} \end{equation}
(see \cite[Prop. 2.2]{Vishik4}). If $N$ is an indecomposable direct summand of $M(X)$, then it follows that $N_{\overline{F}}$ uniquely decomposes into a direct sum of a subset of the Tate motives appearing in the decomposition of $M(X_{\overline{F}})$. Moreover, the subsets which arise from distinct indecomposable summands of $M(Q)$ are necessarily disjoint (see \cite[\S 4]{Vishik4}). The complete motivic decomposition of $X$ over $F$ therefore determines a partition of the Tate motives which appear in \eqref{motdec}. This partition is an important discrete invariant of $X$ known as its \emph{motivic decomposition type}. The motivic decomposition type of $X$ interacts non-trivially with other known discrete invariants of $X$. In particular, it interacts with the Knebusch splitting pattern of its underlying form, and this interaction has been exploited to obtain deep results concerning the latter invariant in recent years. In order to prove the characteristic $\neq 2$ part of Theorem \ref{thmmain}, we will need the most recent advance in the study of the motivic decomposition type of anisotropic quadrics, which is a far-reaching result due to Vishik (\cite{Vishik5}). First, following \cite{Karpenko2}, we define the \emph{upper motive} of $X$ to be the unique indecomposable direct summand $U(X)$ of $M(X)$ such that the trivial Tate motive $\mathbb{Z}$ is isomorphic to a direct summand of $M(X)$. Vishik's result then gives the following information regarding $U(X)$ in the anisotropic case (we recall that an anisotropic quadric in characteristic $\neq 2$ is necessarily smooth -- see \S \ref{secbasic} above):

\begin{theorem}[{Vishik, see \cite[Thm. 2.1]{Vishik5}}] \label{thmexcellentconnections} Let $\phi$ be an anisotropic quadratic form of dimension $\geq 2$ over $F$ with associated $($smooth$)$ projective quadric $X$. Write
$$ \mathrm{dim}(\phi) - \witti{1}{\phi} = 2^{r_1} - 2^{r_2} + \cdots + (-1)^{t-1}2^{r_t} $$
for uniquely determined integers $r_1>r_2> \cdots >r_{t-1}>r_t + 1\geq 1$, and, for each $1 \leq l \leq t$, set
$$ D_l = \sum_{i=1}^{l-1} (-1)^{i-1}2^{r_i - 1} + \epsilon(l)\sum_{j=l}^t(-1)^{j-1}2^{r_j},$$
where $\epsilon(l) = 1$ if $l$ is even and $\epsilon(l) = 0$ if $l$ is odd. Then, for any $1 \leq l \leq t$, the Tate motive $\mathbb{Z}\lbrace D_l \rbrace$ is isomorphic to a direct summand of $U(X)_{\overline{F}}$.\end{theorem}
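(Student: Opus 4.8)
\emph{Strategy and first reduction.} The plan is to follow the circle of ideas developed by Vishik and Karpenko for results of this type: reinterpret the claim as the existence of certain \emph{rational} algebraic cycles on $X\times X$, and then manufacture those cycles from a small number of manifestly rational ``seed'' cycles by applying Steenrod-type operations on Chow groups modulo $2$, together with their integral refinements. By Vishik's foundational work on motives of quadrics (\cite{Vishik1}, see also \cite{Vishik4}), the motivic decomposition of $X$ over $F$---and in particular the decomposition of its upper summand $U(X)$ over $\bar F$ into the Tate motives of \eqref{motdec}---is governed by the image of $\mathrm{Ch}^*(X\times X)$ in $\mathrm{Ch}^*\big((X\times X)_{\bar F}\big)$, i.e.\ by the lattice of \emph{rational} mod-$2$ cycles on $X\times X$. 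Concretely, $\mathbb{Z}\{D_l\}$ occurs as a summand of $U(X)_{\bar F}$ exactly when some rational cycle links the class of a point of $X_{\bar F}$ to the codimension-$D_l$ basis class in the appropriate ``outer'' position; so the theorem reduces to producing, for each $l$, such a cycle. The case $l=1$ is vacuous since $D_1=0$, and the real content lies in the range $l\geq 2$, i.e.\ when $\mathrm{dim}(\phi)-\witti{1}{\phi}$ is not a power of $2$.

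\emph{Seed cycles.} I would start from two obviously-rational cycles on $X\times X$: the class of the diagonal $\Delta_X$, and the class of the closure of the generic point of the first factor (rational because $X$ acquires a rational point over $F(X)$). Decomposed over $\bar F$ in the standard basis of the Chow ring of a split quadric---the hyperplane class and the classes of linear subspaces, as determined by Rost \cite{Rost}---the second of these has a shape governed precisely by $\witti{1}{\phi}$, which is why the relevant parameter is $\mathrm{dim}(\phi)-\witti{1}{\phi}$ and not $\mathrm{dim}(\phi)$ itself. A good way to keep one's bearings is the case of Pfister neighbours, where the generic splitting tower and the full motivic decomposition are classically known via Rost motives; there $t\leq 2$, and for $\phi$ an $n$-fold Pfister form one has $D_2=2^{n-1}-1$, which matches the Rost motive $R_\phi$ exactly. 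The integers $D_l$ in general should be thought of as the positions forced by this ``excellent'' model.

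\emph{Propagating the connections.} The heart of the argument is that the lattice of rational cycles is stable under the total Steenrod operation $S$ on $\mathrm{Ch}^*\big((X\times X)_{\bar F}\big)$ and, crucially, under Vishik's \emph{symmetric operations}, which carry the integral information invisible to $S$ alone. Using the known action of these operations on the Chow rings of products of split quadrics---whose combinatorics is controlled by binomial coefficients modulo $2$, hence by Lucas' theorem, the ultimate source of the powers of $2$ and of the alternating binary expansion of $\mathrm{dim}(\phi)-\witti{1}{\phi}$---I would iterate $S$ and the symmetric operation, starting from the seeds, in a pattern dictated by that expansion: each application ``reflects'' or ``halves'' an already-established connection, and careful bookkeeping should produce, one term at a time, rational cycles realizing the connections $0\leftrightarrow D_2,\ 0\leftrightarrow D_3,\ \dots$ Reading off the idempotent cutting out $U(X)$ then yields the assertion.

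\emph{The main difficulty.} The delicate point is not the mod-$2$ combinatorics but the passage to the genuinely \emph{integral} statement: one must know that the multiplicities produced by the Steenrod calculus are \emph{odd}, so that the relevant cycles split off single Tate motives $\mathbb{Z}\{D_l\}$ rather than pairs of them or configurations entangled by $2$-torsion. This is exactly what the symmetric operations are designed to control, and getting them to cooperate with the combinatorics of all the $D_l$ simultaneously is where the real work lies. Two further points demand care: keeping the cycles produced attached to the \emph{upper} summand $U(X)$ rather than letting them drift into another indecomposable piece of $M(X)$, and handling the end of the iteration, where the hypothesis $r_{t-1}>r_t+1$ enters---this gap of at least $2$ is precisely what makes the final operation admissible.
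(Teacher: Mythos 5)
This theorem is not proved in the paper at all: it is cited verbatim from Vishik's article \emph{Excellent connections in the motives of quadrics} (\cite[Thm.~2.1]{Vishik5}), and the present paper uses it as a black box. There is therefore no ``paper's own proof'' to compare against.

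What you have written is a high-level roadmap of Vishik's actual argument rather than a proof. The broad strokes are right: the statement does reduce to producing rational cycles on $X\times X$, the engine is indeed the mod-$2$ Steenrod operations together with Vishik's symmetric operations (the latter being what supplies integral/parity control), and the binary combinatorics entering through Lucas' theorem is what makes the alternating $2$-expansion of $\mathrm{dim}(\phi)-\witti{1}{\phi}$ the natural bookkeeping device. You also correctly flag the genuinely delicate points---oddness of multiplicities, staying within the upper summand $U(X)$, and the role of the hypothesis $r_{t-1}>r_t+1$ at the last step. But none of these points is actually carried out: every substantive step is deferred with language such as ``careful bookkeeping should produce\dots'' and ``getting them to cooperate\dots is where the real work lies.'' As such, this is a plausible description of how the proof goes, not a proof.

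Two concrete inaccuracies worth noting. First, your Pfister-form sanity check is off: for $\phi$ an anisotropic $n$-fold Pfister form one has $\mathrm{dim}(\phi)-\witti{1}{\phi}=2^{n}-2^{n-1}=2^{n-1}$, which is already a power of $2$, so $t=1$ and there is no $D_2$ in the statement; the value $2^{n-1}-1$ you quote is the shift in the Rost motive $\mathbb{Z}\oplus\mathbb{Z}\{2^{n-1}-1\}$, but it does not arise as a $D_l$ here. Second, the assertion that $\mathbb{Z}\{D_l\}$ occurs as a summand of $U(X)_{\overline F}$ ``exactly when some rational cycle links'' the appropriate classes is stated as if it were a routine dictionary; in reality this equivalence (and in particular the passage from mod-$2$ rational cycles to honest direct-sum decompositions with $\mathbb{Z}$-coefficients) is itself a nontrivial part of Vishik's theory and needs to be invoked with precision, not merely asserted. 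If you want to actually prove the theorem, you should work through \cite{Vishik5} directly; if you only need to \emph{use} it, as the present paper does, the correct move is simply to cite it.
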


We will also make use the following more elementary observation (also due to Vishik) which relates two anisotropic quadrics on the motivic level in the situation where one of the quadrics is stably birational to the variety of totally isotropic subspaces of prescribed dimension in the other (see also \cite[Thm. 4.17]{Vishik4} for a strengthening of this result):

\begin{theorem}[{Vishik, cf. \cite[Prop. 1]{Vishik2}, \cite[Thm. 4.15]{Vishik4}}] \label{thmvishiksummands} Let $p$ and $q$ be anisotropic quadratic forms of dimension $\geq 2$ over $F$ with associated $($smooth$)$ projective quadrics $P$ and $Q$, respectively. Suppose that, for every field extension $K$ of $F$, we have
$$ \witti{0}{p_K} > 0 \;\;\; \Leftrightarrow \;\;\; \witti{0}{q_K} > l. $$
Then $U(P)\lbrace l \rbrace$ is isomorphic to a direct summand of $M(Q)$. \end{theorem}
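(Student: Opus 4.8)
The plan is to route the comparison between $P$ and $Q$ through the $l$-th orthogonal Grassmannian of $q$, and to establish the conclusion in two stages: first, that the quadric $P$ and this Grassmannian have isomorphic upper motives; second, that the upper motive of the Grassmannian, Tate-twisted by $l$, splits off $M(Q)$.

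Let $\Gamma$ denote the smooth projective $F$-scheme parametrising $l$-dimensional totally isotropic (projective) linear subspaces of $Q$; it is geometrically cellular, since $\Gamma_{\overline{F}}$ is a split orthogonal Grassmannian, and therefore satisfies the nilpotence principle and has a well-defined upper motive $U(\Gamma)$. For any field extension $K/F$ one has $\Gamma(K) \neq \emptyset$ exactly when $\witti{0}{q_K} > l$, while, since $p$ is anisotropic of dimension $\geq 2$, $P(K) \neq \emptyset$ exactly when $\witti{0}{p_K} > 0$; thus the hypothesis says precisely that $P$ and $\Gamma$ acquire rational points over the same field extensions of $F$. Taking $K = F(P)$ and $K = F(\Gamma)$, we see that $\Gamma_{F(P)}$ and $P_{F(\Gamma)}$ each carry a zero-cycle of degree $1$ (indeed an $F$-rational point in the first case and an isotropic point in the second), and the standard criterion from the theory of upper motives (Karpenko, Vishik) then yields $U(P) \cong U(\Gamma)$. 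In particular $U(P)$ is a direct summand of $M(\Gamma)$.

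For the second stage I would use the incidence variety $W = \lbrace (x,\Lambda) \in Q \times \Gamma \, : \, x \in \Lambda \rbrace$ with its projections $\sigma \colon W \to Q$ and $\rho \colon W \to \Gamma$. The map $\rho$ is the projectivisation of the tautological rank-$(l+1)$ subbundle on $\Gamma$, hence a $\mathbb{P}^l$-bundle, so the projective bundle formula gives $M(W) \cong \bigoplus_{i=0}^{l} M(\Gamma)\lbrace i \rbrace$; in particular $U(\Gamma)\lbrace l \rbrace \cong U(P)\lbrace l \rbrace$ occurs as a direct summand of $M(W)$, lying in the top piece $M(\Gamma)\lbrace l \rbrace$. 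It remains to transport this summand along $\sigma$ into $M(Q)$ (for $l = 0$ this is vacuous, as $W = Q$ and $\sigma = \mathrm{id}_Q$). Geometrically, $\sigma$ realises $W$ as the relative $(l-1)$-st orthogonal Grassmannian of the bundle of ``derived quadrics'' over $Q$; over $\overline{F}$ both $M(W)$ and $M(Q)$ (by \eqref{motdec}) are direct sums of Tate motives, and a bookkeeping of the cellular structures shows that each Tate motive occurring in $U(\Gamma)_{\overline{F}}\lbrace l \rbrace$ is matched, through $\sigma_{\overline{F}}$, by a Tate summand of $M(Q)_{\overline{F}}$. Converting this numerical match into an actual splitting over $F$ — producing $F$-defined correspondences between the summand $U(\Gamma)\lbrace l \rbrace$ of $M(W)$ and $M(Q)$ whose composite is the identity modulo a nilpotent, and then invoking the nilpotence principle for $Q$ to split $U(P)\lbrace l \rbrace$ off $M(Q)$ — is the technical heart of the argument and the step I expect to require the most care; the obstacle is genuine, since $W \to Q$ is \emph{not} a relative cellular fibration over $F$ (its generic fibre is an anisotropic Grassmannian), so $M(W)$ is not simply a sum of Tate twists of $M(Q)$. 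The difficulty is thus one of keeping careful track of Tate twists and of the multiplicities of the correspondences involved; conceptually, the bundle $W$ is exactly the device that turns ``$P$ has a point over $F(\Gamma)$'' into the precise shift $\lbrace l \rbrace$ that appears in the statement.
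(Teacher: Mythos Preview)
The paper does not give a proof of this theorem; it is quoted from Vishik's work without argument, so there is no in-paper proof to compare your proposal against.

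Judged on its own, your first two stages are sound: the hypothesis says precisely that $P$ and the orthogonal Grassmannian $\Gamma$ have rational points over the same extensions of $F$, and the upper-motive criterion then yields $U(P) \cong U(\Gamma)$; the projective bundle formula for $\rho \colon W \to \Gamma$ correctly exhibits $U(P)\lbrace l \rbrace$ as a summand of $M(W)$. The gap you flag in stage three, however, is genuine and not merely bookkeeping. Because $\sigma \colon W \to Q$ has anisotropic orthogonal Grassmannians as generic fibres, there is no general transfer principle that carries a summand of $M(W)$ into $M(Q)$, and the graph correspondences $\sigma_*$, $\sigma^*$ alone do not do the job (their composite $\sigma_*\sigma^*$ is far from the identity). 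To close the gap you must produce explicit correspondences between $P$ and $Q$ and compute their composites on Tate summands over $\overline{F}$ --- and this is exactly what Vishik does directly in \cite[Thm.~4.15]{Vishik4}: the rational map $P \dashrightarrow \Gamma$ furnishes an $l$-plane in $Q_{F(P)}$, whose closure is a cycle in $\mathrm{CH}_{\dim P + l}(P \times Q)$ defining a morphism $M(P)\lbrace l \rbrace \to M(Q)$; the reverse hypothesis produces a morphism back; one then checks over $\overline{F}$ that the composites act as the identity on the relevant Tate summands and invokes Rost nilpotence. Your incidence-variety framework is a pleasant geometric repackaging of where these cycles come from, but once the missing step is written out it reduces to precisely this explicit construction, so the detour through $M(\Gamma)$ does not actually shorten the road.
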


\subsection{A criterion for stable birational equivalence of smooth projective quadrics}

Let $p$ and $q$ be anisotropic quadratic forms of dimension $\geq 2$ over $F$. If $q_{F(p)}$ is isotropic, then $\witti{0}{q_{F(p)}} \geq \witti{1}{q}$ (see \S \ref{secsplittingtower}), and so $\mathrm{dim}(q) - \witti{1}{q} \geq \mathrm{dim}(p) - \witti{1}{p}$ by an application of Theorem \ref{thmKMT}. In order to make use of Theorem \ref{thmvishiksummands} above in the situation of Theorem \ref{thmmain}, we will need the following complementary statement, also due to Karpenko and Merkurjev:

\begin{theorem}[{Karpenko-Merkurjev, see \cite[Thm. 4.1]{KarpenkoMerkurjev}}] \label{thmKMcriterion} Let $p$ and $q$ be anisotropic quadratic forms of dimension $\geq 2$ over $F$. If $q_{F(p)}$ is isotropic, and $\mathrm{dim}(q) - \witti{1}{q} = \mathrm{dim}(p) - \witti{1}{p}$, then $p_{F(q)}$ is isotropic as well. \end{theorem}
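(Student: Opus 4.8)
The plan is to recast the conclusion as a statement about the Chow motives of $P$ and $Q$, and then to show that the numerical hypothesis forces the \emph{upper motives} $U(P)$ and $U(Q)$ to be isomorphic. As a preliminary, I would first record the ``extremal'' shape of the hypothesis. Since $q_{F(p)}$ is isotropic, the generic splitting tower property (a consequence of Lemma \ref{lemgenerictower}) gives $\witti{0}{q_{F(p)}} \geq \witti{1}{q}$; feeding this into Theorem \ref{thmKMT} together with the equality $\mathrm{dim}(q) - \witti{1}{q} = \mathrm{dim}(p) - \witti{1}{p}$ yields
$$ \witti{1}{q} \leq \witti{0}{q_{F(p)}} \leq \mathrm{dim}(q) - \mathrm{dim}(p) + \witti{1}{p} = \witti{1}{q}, $$
so in fact $\witti{0}{q_{F(p)}} = \witti{1}{q}$: the form $q$ acquires \emph{exactly} its first Witt index over $F(p)$. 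On the motivic side, since the integers $D_1, \dots, D_t$ in Theorem \ref{thmexcellentconnections} depend only on $\mathrm{dim}-\witti{1}$, the hypothesis ensures that $U(P)_{\overline{F}}$ and $U(Q)_{\overline{F}}$ contain the same Tate motives $\mathbb{Z}\lbrace D_1\rbrace, \dots, \mathbb{Z}\lbrace D_t\rbrace$; in particular both contain $\mathbb{Z}\lbrace 0\rbrace$, since $D_1 = 0$. This is the structural reason to expect $U(P) \simeq U(Q)$.

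Next I would reduce everything to the assertion that $U(P)$ is isomorphic to a direct summand of $M(Q)$. Granting this: the uniqueness of the motivic decomposition of $M(Q)$ and the disjointness of the sets of Tate motives attached to its distinct indecomposable summands (see \S\ref{secmotivic}) show that $U(Q)$ is the \emph{only} indecomposable summand of $M(Q)$ whose base change to $\overline{F}$ contains $\mathbb{Z}\lbrace 0\rbrace$; since $U(P)_{\overline{F}}$ contains $\mathbb{Z}\lbrace 0\rbrace$, we get $U(P)\simeq U(Q)$. Extending scalars to $F(q)$ and using that $Q_{F(q)}$ has a rational point — so that $\mathbb{Z}\lbrace 0\rbrace$ splits off $M(Q_{F(q)})$, necessarily from the summand $U(Q)_{F(q)}$ — we conclude that $\mathbb{Z}\lbrace 0\rbrace$ is a direct summand of $M(P_{F(q)})$. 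Equivalently, $P$ carries a zero-cycle of degree $1$ over $F(q)$, and hence $p_{F(q)}$ is isotropic (by Springer's theorem, a quadric carrying a zero-cycle of odd degree is isotropic).

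It remains to embed $U(P)$ as a summand of $M(Q)$, and this is the crux. Since $q_{F(p)}$ is isotropic, the generic point of $P$ is a rational point of $Q$ over $F(p)$, giving a rational map $f\colon P \dashrightarrow Q$ over $F$; the closure of its graph is a cycle $\Gamma \in \mathrm{CH}_{\mathrm{dim}\,P}(P\times Q)$ projecting birationally onto the first factor, i.e.\ of multiplicity one over the generic point of $P$. Viewing $\Gamma$ and its transpose as correspondences, the task is to manufacture from them a pair of correspondences whose compositions are, \emph{modulo nilpotent correspondences}, the idempotents cutting out $U(P)$ in $M(P)$ and in $M(Q)$; Rost's nilpotence theorem for quadrics then upgrades this to a genuine direct summand. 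The technical heart is a computation in $\mathrm{CH}_*\big((P\times Q)_{\overline{F}}\big)/2$: one expresses a rational correspondence in terms of the standard basis of the Chow group of the split quadric $(P\times Q)_{\overline{F}}$, identifies which basis cycles can occur in a rational correspondence of multiplicity one, and shows that the lowest-degree (``diagonal'') component — the one responsible for splitting off $\mathbb{Z}\lbrace 0\rbrace$ — is forced to be nonzero modulo $2$. This is exactly where the hypothesis $\mathrm{dim}(p) - \witti{1}{p} = \mathrm{dim}(q) - \witti{1}{q}$ is indispensable: combined with Vishik's description of the Tate motives in $U(P)_{\overline{F}}$ and $U(Q)_{\overline{F}}$ (Theorem \ref{thmexcellentconnections}) and the identity $\witti{0}{q_{F(p)}} = \witti{1}{q}$ obtained above, it matches the ``sizes'' of the two upper motives and keeps the relevant component alive; without it the component can vanish, as the example of an anisotropic conic $Q$ and a binary subform $p$ of $q$ shows (there $q_{F(p)}$ is isotropic but $U(P)\not\simeq U(Q)$). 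I expect this cycle-theoretic estimate to be the main obstacle; the remaining steps are formal.
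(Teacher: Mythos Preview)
The paper does not prove this statement; it is quoted from \cite[Thm.~4.1]{KarpenkoMerkurjev} and used as a black box. So there is no in-paper proof to compare against, only the original Karpenko--Merkurjev argument.

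Your proposal has two genuine gaps. First, a circularity: you open by invoking Theorem~\ref{thmKMT} to deduce $\witti{0}{q_{F(p)}}=\witti{1}{q}$, but in \cite{KarpenkoMerkurjev} that bound is Corollary~4.2, itself derived \emph{from} Theorem~4.1 --- the very statement you are asked to prove. Second, and more seriously, the heart of the argument is not given. You correctly reduce to showing that $U(P)$ is a direct summand of $M(Q)$ (the subsequent deduction of $U(P)\simeq U(Q)$ and of isotropy of $p_{F(q)}$ is fine), but you then only say that you ``expect this cycle-theoretic estimate to be the main obstacle'' and stop. The appeal to Theorem~\ref{thmexcellentconnections} does not help here: knowing that $U(P)_{\overline{F}}$ and $U(Q)_{\overline{F}}$ each contain the Tate summands $\mathbb{Z}\lbrace D_l\rbrace$ produces no morphism between $U(P)$ and $M(Q)$ over $F$, and that $\mathbb{Z}\lbrace 0\rbrace$ lies in $U(P)_{\overline{F}}$ is just the definition of the upper motive, not a consequence of excellent connections. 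Your overall strategy is not wrong --- the stable birational equivalence of $P$ and $Q$ is indeed equivalent to $U(P)\simeq U(Q)$ --- but the actual 2003 proof predates both upper motives and Vishik's excellent connections. It works directly with the closure of the graph of $P\dashrightarrow Q$ as a correspondence and shows, by an explicit computation in Chow groups modulo~$2$ using only the numerical hypothesis and the description of rational cycles on an anisotropic quadric in terms of $\witti{1}$, that the transposed correspondence has odd multiplicity over the generic point of $Q$; Springer's theorem then finishes. That computation is precisely what your sketch leaves undone.
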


\begin{remark} In \cite{Totaro1}, Totaro showed that Theorem \ref{thmKMcriterion} is also valid in characteristic 2 (irrespective of any smoothness assumptions). We shall not need this result here. \end{remark}

\section{Main results: The characteristic $\neq 2$ case} \label{secchar0} In this section, we show that Theorem \ref{thmmain} is valid in characteristic $\neq 2$. Following the previous section, $F$ will denote an arbitrary field of characteristic $\neq 2$ throughout. Note that if $p$ is an anisotropic quadratic form of dimension $\geq 2$ over $F$, then Theorem \ref{thmKarpenko} asserts that the integer $\witti{1}{p} - 2^{v_2(\mathrm{dim}(p) - \witti{1}{p})}$ is non-positive. The characteristic $\neq 2$ case of Theorem \ref{thmmain} therefore follows from the following more refined statement:

\begin{theorem} \label{thmmaininchar0} Let $p$ and $q$ be anisotropic quadratic forms of dimension $\geq 2$ over $F$, and let $s = v_2\big(\mathrm{dim}(p) - \witti{1}{p}\big)$. Then
$$ \witti{0}{q_{F(p)}} \leq \mathrm{max}\left(\mathrm{dim}(q) - \mathrm{dim}(p) + \witti{1}{p} - 2^{s}, 2^{s}\right). $$
\begin{proof} In order to simplify the notation, set $\mathfrak{i} := \witti{0}{q_{F(p)}}$. Our assertion then reads
\begin{equation} \label{mainineq}  \mathfrak{i} \leq \mathrm{max}\left(\mathrm{dim}(q) - \mathrm{dim}(p) + \witti{1}{p} - 2^{s}, 2^{s}\right). \end{equation}
The idea of the proof is now the following: Let $P$ and $Q$ be the (necessarily smooth) projective $F$-quadrics defined by the vanishing of $p$ and $q$, respectively. Assuming that \eqref{mainineq} fails to holds, we show (modulo an inductive assumption) that a certain shift of the upper motive of $P$ is a direct summand of the motive of $Q$ (see \S \ref{secmotivic} above). We then show that this is not possible by using Theorem \ref{thmexcellentconnections} to analyse the motivic decomposition types of $P$ and $Q$. The key observation here is the following:

\begin{lemma} \label{lemchar0calc} Suppose, in the above situation, that \eqref{mainineq} does not hold. Then, for any field extension $K$ of $F$, we have
$$ \witti{0}{p_K} > 0 \;\;\; \Leftrightarrow \;\;\; \witti{0}{q_K} \geq \mathfrak{i}. $$
\begin{proof} Let $F = F_0 \subset F_1 \subset \cdots \subset F_{h(q)}$ be the Knebusch splitting tower of $q$ and let $q_1,\hdots,q_{h(q)}$ be its higher anisotropic kernel forms (see \S \ref{secsplittingtower}). Recall that for each integer $k \in [0,h(q)]$, $\wittj{k}{q}$ denotes the isotropy index $\witti{0}{q_{F_k}}$ of $q$ over $F_k$. Since \eqref{mainineq} fails to hold by hypothesis, we have $\mathfrak{i} > 1$. In particular, $q_{F(p)}$ is isotropic. Thus, by the genericity of Knebusch's construction, there exists  an integer $r \in [0,h(q)-1]$ such that $\wittj{r+1}{q} = \mathfrak{i}$ (again, see \S \ref{secsplittingtower}). We claim that $p_{F_{r+1}}$ is isotropic. In order to prove this claim, we may clearly assume that $p_{F_r}$ is anisotropic (in fact, one can show that this is indeed the case, but we don't need to know that here). Working under this assumption, let us now set
$$ \mu := \mathrm{dim}(q_r) - \witti{1}{q_r} - \big(\mathrm{dim}(p_{F_r}) - \witti{1}{p_{F_r}}\big). $$
Since $\mathfrak{i} = \wittj{r+1}{q}> \wittj{r}{q}$, $q_r$ becomes isotropic over $F_r(p)$. As $F_{r+1} = F_r(q_r)$ by definition, Karpenko and Merkurjev's Theorem \ref{thmKMcriterion} shows that, to prove our claim, it suffices to check that $\mu = 0$. By the remarks preceding Theorem \ref{thmKMcriterion}, we certainly have $\mu \geq 0$. To see that equality holds here, let us first note that $F_r(p)$ is a purely transcendental extension of $F(p)$ by Lemma \ref{lemgenerictower}. Since isotropy indices are insensitive to purely transcendental extensions (\S \ref{secbasic}), it follows that $\witti{1}{p_{F_r}} = \witti{1}{p}$. In particular, we have
\begin{equation} \label{mudesc} \mu = \mathrm{dim}(q_r) - \witti{1}{q_r} - \big(\mathrm{dim}(p) - \witti{1}{p}\big). \end{equation}
Now, since $\mathfrak{i} > \mathrm{dim}(q) - \mathrm{dim}(p) + \witti{1}{p} - 2^s$ by hypothesis, we see that
\begin{eqnarray*} \big(\mathrm{dim}(p) - \witti{1}{p}\big) + \mu & = &  \mathrm{dim}(q_r) - \witti{1}{q_r} \\
&=& \mathrm{dim}(q) - 2\mathfrak{j}_r(q) - \witti{r+1}{q} \\
&=& \mathrm{dim}(q) - \mathfrak{j}_{r+1}(q) - \mathfrak{j}_r(q) \\
&=& \mathrm{dim}(q) - \mathfrak{i} - \mathfrak{j}_r(q) \\
&<& \big(\mathrm{dim}(p) - \witti{1}{p}\big) + 2^s - \mathfrak{j}_r(q), \end{eqnarray*}
and so $\mu < 2^s - \mathfrak{j}_r(q)$. On the other hand, since we are also assuming that $\mathfrak{i} > 2^s$, we have
$$ \witti{1}{q_r} = \mathfrak{j}_{r+1}(q) - \mathfrak{j}_r(q) = \mathfrak{i} - \mathfrak{j}_r(q) > 2^s - \mathfrak{j}_r(q). $$
By Karpenko's theorem (see Theorem \ref{thmKarpenko}), it follows that $\mathrm{dim}(q_r) - \witti{1}{q_r}$ is divisible by $2^t$ for some integer $t$ satisfying $2^t \geq \witti{1}{q_r} > 2^s - \mathfrak{j}_r(q)$. But, by \eqref{mudesc}, we have
$$ \mathrm{dim}(q_r) - \witti{1}{q_r} = \big(\mathrm{dim}(p) - \witti{1}{p}\big) + \mu. $$
As $\mathrm{dim}(p) - \witti{1}{p}$ is (by definition) divisible by $2^s$, it follows that $\mu$ is divisible by $\mathrm{min}(2^s, 2^t)$. Since $0 \leq \mu < 2^s - \mathfrak{j}_r(q) < \mathrm{min}(2^t,2^s)$, this shows that $\mu = 0$, and thus proves our initial claim that $p_{F_{r+1}}$ is isotropic. Finally, to complete the proof, let $K$ be any field extension of $F$. If $\witti{0}{p_K}>0$, then $K(p)$ is, by Lemma \ref{lemgenerictower}, a purely transcendental extension of $K$, and so $\witti{0}{q_K} = \witti{0}{q_{K(p)}} \geq \witti{0}{q_{F(p)}} \geq \mathfrak{i}$. Conversely, if $\witti{0}{q_K} \geq \mathfrak{i}$, then Lemma \ref{lemgenerictower} again shows that the compositum $L :=K\cdot F_{r+1}$ is a purely transcendental extension of $K$, and so $\witti{0}{p_K}  = \witti{0}{p_L} \geq \witti{0}{p_{F_{r+1}}} > 0$. The conditions $\witti{0}{p_K} > 0$ and $\witti{0}{q_K} \geq \mathfrak{i}$ are therefore equivalent, which is what we wanted to prove. \end{proof}
\end{lemma}
We now return to the proof of Theorem \ref{thmmaininchar0}. We will argue by induction on $\mathrm{dim}(q)$, the case where $\mathrm{dim}(q) = 2$ being trivial. Assume now that $\mathrm{dim}(q) > 2$ and that the theorem is valid for all forms of dimension $< \mathrm{dim}(q)$ over $F$. Under this inductive hypothesis, we will show that \eqref{mainineq} is valid for $q$. For this, we will use the following trivial observation:

\begin{lemma} \label{lemsubformreduction} Let $q$ be a quadratic form over $F$ and let $K$ be a field extension of $F$. Then, for any integer $i \in [0,\witti{0}{q_K}]$, there exists a subform $q' \subset q$ such 
\begin{enumerate} \item $\mathrm{dim}(q') \leq \mathrm{dim}(q) - i$.
\item $\witti{0}{q'_K} = \witti{0}{q_K} - i$. \end{enumerate}
\begin{proof} It is enough to treat the case where $i = 1$. We argue in this case by induction on $\mathrm{dim}(q)$. The case where $\mathrm{dim}(q) =0$ is trivial. Assume now that $\mathrm{dim}(q) > 0$, and let $U$ be a totally isotropic $K$-linear subspace of $V_q \otimes_F K$ of dimension $\witti{0}{q_K}$. If $W$ is a hyperplane in $V_q$, then the intersection of $U$ and $W \otimes_F K$ in $V_q \otimes_F K$ has $K$-dimension at least $\witti{0}{q_K} - 1$. In particular, if $r$ is a codimension-1 subform of $q$, then $\witti{0}{r_K} \geq \witti{0}{q_K} -1$. By the induction hypothesis, $r$ admits a subform $q'$ of dimension $\leq \mathrm{dim}(q) - i$ such that $\witti{0}{q'_K} = \witti{0}{q_K} - i$. Since $r$ is a subform of $q$, so is $q'$, and the result therefore follows.   \end{proof} \end{lemma}

Now, suppose, for the sake of contradiction, that inequality \eqref{mainineq} is not valid for $q$. Then $\mathfrak{i}>2^s$, and so, by Lemma \ref{lemsubformreduction}, there exists a subform $q'$ of codimension at least $\mathfrak{i} - (2^s +1)$ in $q$ such that $\witti{0}{q'_{F(p)}} = 2^s + 1$. But then the statement of our theorem fails to hold for the pair $(q',p)$. By our inductive assumption, we therefore conclude that $q' = q$, and so $\mathfrak{i} = 2^s + 1$. Now, since \eqref{mainineq} fails to hold, Lemma \ref{lemchar0calc} shows that the condition of Theorem \ref{thmvishiksummands} is satisfied with $l = \mathfrak{i}-1 = 2^s$, and hence $U(P)\lbrace 2^s \rbrace$ is isomorphic to a direct summand of $M(Q)$. Note, however, that $U(P)\lbrace 2^s \rbrace$ is \emph{not} isomorphic to the upper motive $U(Q)$ of $Q$. Indeed, since $2^s \geq 1$, the trivial Tate motive $\mathbb{Z}$ is not isomorphic to a direct summand of $U(P)\lbrace 2^s \rbrace_{\overline{F}}$. Thus, $U(Q)$ and $U(P)\lbrace 2^s \rbrace$ are (isomorphic to) \emph{distinct} indecomposable direct summands of $M(Q)$. To complete the proof, we will now obtain a contradiction to our initial supposition by showing that $U(P)\lbrace 2^s \rbrace_{\overline{F}}$ and $U(Q)_{\overline{F}}$ have a common Tate motive in their respective decompositions (this cannot happen for non-isomorphic direct summands of $M(Q)$ -- see \S \ref{secmotivic} above). For this, we use Vishik's Theorem \ref{thmexcellentconnections}. More specifically, we will use Vishik's result to show that $\mathbb{Z}\lbrace m \rbrace$ is isomorphic to a direct summand of both $U(P)\lbrace 2^s \rbrace_{\overline{F}}$ and $U(Q)_{\overline{F}}$, where $m := (\mathrm{dim}(p) - \witti{1}{p} + 2^s)/2$. In the former case, this follows immediately from Theorem \ref{thmexcellentconnections}. Indeed, the reader will quickly check that, for $X = P$, the $l=t$ case of the latter result states precisely that $\mathbb{Z}\lbrace m - 2^s \rbrace$ is isomorphic to a direct summand of $U(P)_{\overline{F}}$. To see that $\mathbb{Z} \lbrace m \rbrace$ is isomorphic to a direct summand of $U(Q)_{\overline{F}}$, however, we first need to analyse the alternating $2$-expansion of the integer $\mathrm{dim}(q) - \witti{1}{q}$. More specifically, we need the following observation: 

\begin{lemma} \label{lemaltexp} Suppose we are in the above situation, and write
$$ \mathrm{dim}(q) - \witti{1}{q} = 2^{r_1} - 2^{r_2} + \cdots + (-1)^{t-1}2^{r_t} $$
for uniquely determined integers $r_1>r_2> \cdots>r_{t-1}>r_t + 1 \geq 1$. Then there exists an even integer $k \in [1,t-1]$ such that
$$ 2m = 2^{r_1} - 2^{r_2} + \cdots + 2^{r_{k-1}} - 2^{r_k}. $$ \end{lemma}

Given this assertion, we obtain that $\mathbb{Z}\lbrace m \rbrace$ is isomorphic to a direct summand of $U(Q)_{\overline{F}}$ by applying Theorem \ref{thmexcellentconnections} to the case where $X = Q$ and $l = k+1$. Before proving Lemma \ref{lemaltexp}, we make an intermediate calculation:

\begin{sublemma} \label{sublemalpha} In the above situation, the integer
$$\alpha := \big(\mathrm{dim}(q) - \mathrm{dim}(p) + \witti{1}{p} - 2^s\big) - \witti{1}{q}$$
is positive.
\begin{proof} Let $F = F_0 \subset F_1 \subset \cdots \subset F_{h(q)}$ be the Knebusch splitting tower of $q$, and let $r \in [0,h(q) - 1]$ be such that $\wittj{r+1}{q} = \mathfrak{i}$ (see \S \ref{secsplittingtower}). Since we are assuming that \eqref{mainineq} does not hold, the proof of Lemma \ref{lemchar0calc} shows that
\begin{eqnarray}  \notag\mathrm{dim}(p) - \witti{1}{p} &=& \mathrm{dim}(q_r) - \witti{1}{q_r} \\
 \label{eqsublem} &=& \mathrm{dim}(q) - \mathfrak{i} - \wittj{r}{q}\\ 
 \notag & =& \mathrm{dim}(q) - (2^s + 1) - \wittj{r}{q}, \end{eqnarray}
and so $\alpha = \wittj{r}{q} - \witti{1}{q} + 1$. Since $\wittj{r}{q} \geq \witti{1}{q}$ if and only if $r \geq 1$, the claim therefore amounts to the assertion that $r \neq 0$. Suppose, for the sake of contradiction, that this is not the case. Then we have
$$ \witti{1}{q} = \wittj{1}{q} = \mathfrak{i} = 2^s + 1. $$
On the other hand, since $\wittj{0}{q} = 0$, \eqref{eqsublem} then becomes
$$ \mathrm{dim}(q) - \witti{1}{q} = \mathrm{dim}(p) - \witti{1}{p}, $$
and so $\witti{1}{q} \leq 2^s $ by Karpenko's theorem (see Theorem \ref{thmKarpenko}) and the definition of $s$. We thus conclude that our supposition was incorrect, and so our claim is valid.
\end{proof} \end{sublemma}

\begin{proof}[Proof of Lemma \ref{lemaltexp}] We begin by observing that we can write
\begin{equation} \label{alphaeq} \mathrm{dim}(q) - \witti{1}{q} = \big(\mathrm{dim}(p) - \witti{1}{p} + 2^s\big) + \alpha, \end{equation}
where, as above,
$$ \alpha = \big(\mathrm{dim}(q) - \mathrm{dim}(p) + \witti{1}{p} - 2^s\big) - \witti{1}{q}.$$
By Sublemma \ref{sublemalpha}, $\alpha$ is positive. On the other hand, since \eqref{mainineq} fails to hold, the integer $\big(\mathrm{dim}(q) - \mathrm{dim}(p) + \witti{1}{p} - 2^s\big)$ is strictly less than $\mathfrak{i} = 2^s + 1$. Since $\witti{1}{q} \geq 1$, we conclude that $0 < \alpha < 2^s$. Now, by the very definition of $s$, we have 
$$ \mathrm{dim}(p) - \witti{1}{p} = 2^{b_1} - 2^{b_2} + \cdots + (-1)^{u-2}2^{b_{u-1}} + (-1)^{u-1}2^{s} $$
for unique integers $b_1 > b_2 > \cdots > b_{u-1} > s + 1 \geq 1$. Adding $2^s$, we see that
$$ \mathrm{dim}(p) - \witti{1}{p} + 2^s = \begin{cases} 2^{b_1} - 2^{b_2} + \cdots + 2^{b_{u-1}} & \text{ if $u$ is even}\\
2^{b_1} - 2^{b_2} + \cdots - 2^{b_{u-1}} + 2^{s+1} & \text{ if $u$ is odd.} \end{cases} $$
Irrespective of whether $u$ is odd or even, it follows that
$$ \mathrm{dim}(p) - \witti{1}{p} + 2^s = 2^{r_1} - 2^{r_2} + \cdots + 2^{r_{k-1}} - 2^{r_k} $$
for some even integer $k$ and integers $r_1>r_2>\cdots>r_{k-1}>r_k>s$. At the same time, since $0 < \alpha < 2^s$, we can write
$$ \alpha = 2^{r_{k+1}} - 2^{r_{k+2}} + \cdots + (-1)^{t-1}2^{r_t} $$
for uniquely determined integers $s \geq r_{k+1} > r_{k+2} > \cdots > r_{t-1} > r_t + 1 \geq 1$. By \eqref{alphaeq}, we then have
$$ \mathrm{dim}(q) - \witti{1}{q} = \big(2^{r_1} - 2^{r_2} + \cdots + 2^{r_{k-1}} - 2^{r_k}\big) + \big(2^{r_{k+1}} - 2^{r_{k+2}} + \cdots + (-1)^{t-1}2^{r_t}\big), $$
and this is precisely the description of $\mathrm{dim}(q) - \witti{1}{q}$ as an alternating sum of $2$-powers which appears in the statement of the lemma. Since $m = (\mathrm{dim}(p) - \witti{1}{p} + 2^s)/2$ by definition, we are done. \end{proof}

With the lemma proved, we have shown that the non-isomorphic indecomposable direct summands $U(Q)$ and $U(P)\lbrace 2^s \rbrace$ of $M(Q)$ admit a common Tate motive in their respective decompositions after scalar extension to $\overline{F}$, thus providing us with the needed contradiction. This completes the induction step and so the theorem is proved. \end{proof}
\end{theorem}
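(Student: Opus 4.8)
The plan is to argue by contradiction, using induction on $\mathrm{dim}(q)$ and exploiting the interplay between the Knebusch splitting pattern of $q$ and the motivic decomposition types of the (necessarily smooth) quadrics $P$ and $Q$ cut out by $p$ and $q$. Set $\mathfrak{i} \coloneqq \witti{0}{q_{F(p)}}$ and suppose the asserted inequality fails, so that both $\mathfrak{i} > 2^s$ and $\mathfrak{i} > \mathrm{dim}(q) - \mathrm{dim}(p) + \witti{1}{p} - 2^s$; the base case $\mathrm{dim}(q) = 2$ is trivial since then $\mathfrak{i} \leq 1 \leq 2^s$. For the inductive step I would first reduce to the extremal case $\mathfrak{i} = 2^s + 1$: intersecting a maximal totally isotropic subspace of $q_{F(p)}$ with a descending chain of hyperplanes of $V_q$ produces, for each $j \leq \mathfrak{i}$, an (automatically anisotropic) subform $q' \subseteq q$ with $\mathrm{dim}(q') \leq \mathrm{dim}(q) - j$ and $\witti{0}{q'_{F(p)}} = \mathfrak{i} - j$. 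Taking $j = \mathfrak{i} - 2^s - 1$ and a short computation with the failing inequality (noting $\witti{1}{p}$ and $s$ depend only on $p$) shows the theorem would then also fail for the pair $(q', p)$; since $\mathrm{dim}(q') < \mathrm{dim}(q)$ whenever $j \geq 1$, the induction hypothesis forces $j = 0$, i.e.\ $\mathfrak{i} = 2^s + 1$.

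The heart of the argument is the genericity equivalence
$$ \witti{0}{p_K} > 0 \quad \Longleftrightarrow \quad \witti{0}{q_K} \geq \mathfrak{i} \qquad \text{ for every field extension } K/F. $$
The forward implication is automatic: if $p_K$ is isotropic then $K(p)$ is purely transcendental over $K$ by Lemma \ref{lemgenerictower}, so $\witti{0}{q_K} = \witti{0}{q_{K(p)}} \geq \witti{0}{q_{F(p)}} = \mathfrak{i}$. For the converse I would choose $r$ in the splitting tower $F = F_0 \subset \cdots \subset F_{h(q)}$ of $q$ with $\wittj{r+1}{q} = \mathfrak{i}$ and prove that $p_{F_{r+1}}$ is isotropic, after which a further application of Lemma \ref{lemgenerictower} propagates this to arbitrary $K$. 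We may assume $p_{F_r}$ anisotropic; since $F_{r+1} = F_r(q_r)$ and $q_r$ becomes isotropic over $F_r(p)$, Theorem \ref{thmKMcriterion} reduces the claim to $\mu \coloneqq \big(\mathrm{dim}(q_r) - \witti{1}{q_r}\big) - \big(\mathrm{dim}(p) - \witti{1}{p}\big) = 0$, where I use $\witti{1}{p_{F_r}} = \witti{1}{p}$ (again purely transcendental invariance, via Lemma \ref{lemgenerictower}). Here $\mu \geq 0$ by Theorem \ref{thmKMT}, and the identity $\big(\mathrm{dim}(p) - \witti{1}{p}\big) + \mu = \mathrm{dim}(q) - \wittj{r}{q} - \mathfrak{i}$ combined with the two hypotheses yields both $\mu < 2^s - \wittj{r}{q}$ and $\witti{1}{q_r} > 2^s - \wittj{r}{q}$; Karpenko's theorem (Theorem \ref{thmKarpenko}) applied to $q_r$ then gives $2^{v_2(\mathrm{dim}(q_r) - \witti{1}{q_r})} \geq \witti{1}{q_r} > 2^s - \wittj{r}{q}$, and since $2^s \mid \mathrm{dim}(p) - \witti{1}{p}$ while $2^{v_2(\mathrm{dim}(q_r) - \witti{1}{q_r})} \mid \mathrm{dim}(q_r) - \witti{1}{q_r} = \big(\mathrm{dim}(p) - \witti{1}{p}\big) + \mu$, the minimum of these two powers of $2$ divides $\mu$; but that minimum strictly exceeds $\mu \geq 0$, so $\mu = 0$.

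Granting the equivalence, Theorem \ref{thmvishiksummands} applied with $l = \mathfrak{i} - 1 = 2^s$ shows that $U(P)\lbrace 2^s \rbrace$ is a direct summand of $M(Q)$, and it is distinct from the upper motive $U(Q)$ because, as $2^s \geq 1$, the trivial Tate motive $\mathbb{Z}$ occurs in $U(Q)_{\overline{F}}$ but not in $U(P)\lbrace 2^s \rbrace_{\overline{F}}$. Since distinct indecomposable summands of a quadric's motive have disjoint sets of Tate motives after scalar extension to $\overline{F}$ (see \S \ref{secmotivic}), it suffices to exhibit a common Tate motive $\mathbb{Z}\lbrace m \rbrace$ in $U(P)\lbrace 2^s \rbrace_{\overline{F}}$ and $U(Q)_{\overline{F}}$, where $m = \big(\mathrm{dim}(p) - \witti{1}{p} + 2^s\big)/2$. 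For $P$ this is precisely the $l = t$ case of Vishik's Theorem \ref{thmexcellentconnections} (producing $\mathbb{Z}\lbrace m - 2^s \rbrace$ in $U(P)_{\overline{F}}$), Tate-twisted by $2^s$. For $Q$ I would analyse the alternating $2$-adic expansion of $\mathrm{dim}(q) - \witti{1}{q}$: write $\mathrm{dim}(q) - \witti{1}{q} = \big(\mathrm{dim}(p) - \witti{1}{p} + 2^s\big) + \alpha$ with $\alpha = \big(\mathrm{dim}(q) - \mathrm{dim}(p) + \witti{1}{p} - 2^s\big) - \witti{1}{q}$, show $0 < \alpha < 2^s$, and splice the (explicitly known) alternating expansion of $\mathrm{dim}(p) - \witti{1}{p} + 2^s$ with that of the small quantity $\alpha$; this realises $2m$ as a truncation of the expansion of $\mathrm{dim}(q) - \witti{1}{q}$ ending at an even-indexed term, so the corresponding instance of Theorem \ref{thmexcellentconnections} places $\mathbb{Z}\lbrace m \rbrace$ in $U(Q)_{\overline{F}}$, yielding the contradiction.

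I expect the main obstacle to be the genericity equivalence --- concretely, forcing $\mu = 0$ by playing the two opposing inequalities (one coming from the failure of the bound, one from $\mathfrak{i} > 2^s$) against the $2$-power divisibility supplied by Karpenko's theorem, and getting the splitting-tower bookkeeping right (in particular $\witti{1}{p_{F_r}} = \witti{1}{p}$ and $r \neq 0$, the latter being exactly what makes $\alpha$ strictly positive, via a second application of Karpenko's theorem). Once this is in place the motivic half is comparatively mechanical, the only genuine combinatorial input being the compatibility of adding $2^s$, and then a quantity in $(0, 2^s)$, to $\mathrm{dim}(p) - \witti{1}{p}$ with the alternating $2$-adic expansions appearing in Theorem \ref{thmexcellentconnections}.
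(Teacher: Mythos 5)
Your proposal is correct and follows essentially the same route as the paper: induction reduces to the extremal case $\mathfrak{i} = 2^s + 1$ via the subform-intersection lemma, the genericity equivalence is proved by working in Knebusch's splitting tower and forcing $\mu = 0$ by playing the two inequalities against Karpenko's $2$-power divisibility, Vishik's Theorem \ref{thmvishiksummands} then yields $U(P)\lbrace 2^s \rbrace$ as a summand of $M(Q)$, and the contradiction comes from exhibiting $\mathbb{Z}\lbrace m\rbrace$ with $m = (\mathrm{dim}(p) - \witti{1}{p} + 2^s)/2$ as a common Tate motive of $U(P)\lbrace 2^s\rbrace_{\overline{F}}$ and $U(Q)_{\overline{F}}$ by splicing alternating $2$-adic expansions. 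You have also correctly identified that the positivity of $\alpha$ (equivalently $r \neq 0$) needs a second application of Karpenko's theorem, which is exactly the content of the paper's Sublemma \ref{sublemalpha}.
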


\begin{remark} Theorem \ref{thmmaininchar0} is indeed a non-trivial refinement of Theorem \ref{thmmain} in the characteristic $\neq 2$ setting. For example, if $p$ is odd-dimensional, then the integer $\witti{1}{p} - 2^s$ is negative by Theorem \ref{thmKarpenko}, and so we get a slightly better result in this case. \end{remark}

\section{Some preliminaries in characteristic 2} We now turn to the case of quasilinear quadratic forms in characteristic 2. We begin by collecting some preliminary results to be used later on. For detailed expositions of the basic theory of quasilinear quadratic forms, the reader is referred to \cite{Hoffmann2} and \cite{Scully3}. For the remainder of this section, $F$ will now denote an arbitrary field of \emph{characteristic $2$}.

\subsection{The representation theorem} It is well known that an anisotropic quadratic form over an arbitrary field can be recovered (up to isometry) from the set of values that it represents over \emph{every} extension of that field (see \cite[Thm. 17.12]{EKM}) In the quasilinear setting, an anisotropic form can already be recovered from the set of values that it represents over the \emph{base field}. Indeed, we have the following simple observation:

\begin{proposition}[{cf. \cite[Prop. 8.1]{HoffmannLaghribi1}}] \label{propsubforms} Let $p$ and $q$ be quasilinear quadratic forms over $F$. Then $\anispart{p} \subset \anispart{q}$ if and only if $D(p) \subseteq D(q)$. In particular, $\anispart{p} \simeq \anispart{q}$ if and only if $D(p) = D(q)$. \end{proposition}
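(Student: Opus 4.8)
The plan is to exploit the fact that, for a quasilinear quadratic form in characteristic $2$, the set of represented values $D(q)$ is an $F^2$-subspace of $F$, and moreover $D(q) = D(\anispart{q})$ with $\dim(\anispart{q}) = \dim_{F^2} D(\anispart{q})$. These two observations, already recorded in Section 2, reduce the proposition to an essentially linear-algebraic statement about $F^2$-subspaces of $F$. First I would recall that if $q \simeq \form{a_1,\ldots,a_n}$ then $D(q) = \sum_{i=1}^n F^2 a_i$ (by additivity of $q$), so $D(q)$ is indeed an $F^2$-subspace of $F$, and $D(q) = D(q_L) \cap F$-type considerations are not even needed. Then I would note that $q$ is anisotropic precisely when the vectors $a_1,\ldots,a_n$ are $F^2$-linearly independent in $F$, in which case $\dim(q) = \dim_{F^2} D(q)$; in general, passing to $\anispart{q}$ (the restriction of $q$ to $V_q / U$, $U$ the isotropic subspace) does not change $D(q)$, so $D(\anispart{q}) = D(q)$ always, and $\dim(\anispart{q}) = \dim_{F^2} D(q)$.

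The forward direction is immediate: if $\anispart{p} \subset \anispart{q}$, then $\anispart{p}$ is a subform of $\anispart{q}$, so $D(p) = D(\anispart{p}) \subseteq D(\anispart{q}) = D(q)$. For the converse, suppose $D(p) \subseteq D(q)$. Choose a diagonalization $\anispart{p} \simeq \form{a_1,\ldots,a_m}$; then $a_1,\ldots,a_m$ is an $F^2$-basis of $D(p)$, hence an $F^2$-linearly independent subset of $D(q)$. Since $a_1,\ldots,a_m \in D(q) = D(\anispart{q})$ and $\anispart{q}$ is anisotropic, each $a_i$ is represented by $\anispart{q}$; the key point is then that an $F^2$-linearly independent family of values represented by an anisotropic quasilinear form can be ``realized'' as (part of) an orthogonal diagonalization of that form. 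I would prove this by induction on $m$: if $a_1 \in D(\anispart{q})$, write $\anispart{q} \simeq \form{a_1} \perp q'$ for some (automatically anisotropic) quasilinear form $q'$; this is the standard fact that any nonzero represented value of a form occurs as a diagonal entry (\cite[Prop. 2.4]{Hoffmann2} or the analogue of \cite[Lem. 1.16]{EKM} in the quasilinear case). Then $D(q') = D(\anispart{q}) / $ ... more precisely, I must check that $a_2,\ldots,a_m$ remain representable by $q'$; this follows because $D(\anispart{q}) = F^2 a_1 \oplus D(q')$ as $F^2$-spaces (anisotropy of $\anispart{q}$ forces the sum to be direct), so any $a_i$ ($i \geq 2$), being $F^2$-independent from $a_1$, has its $D(q')$-component equal to $a_i$ modulo $F^2 a_1$, and one adjusts by an element of $F^2 a_1 \subseteq F^2 a_1$... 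Here a small subtlety arises: $a_i$ itself need not lie in $D(q')$, only in $F^2 a_1 + D(q')$. This is handled by noting that we are free to replace the chosen basis $a_1, \ldots, a_m$ of $D(p)$ by any other $F^2$-basis; after fixing $a_1$, we may perform Gaussian elimination over $F^2$ to arrange that $a_2, \ldots, a_m$ lie in a complement of $F^2 a_1$, and since $D(q') $ is such a complement inside $D(\anispart{q})$, we can choose the new $a_2, \ldots, a_m$ to lie in $D(q')$. Then the induction hypothesis applied to $q'$ (with $m-1$ values) gives $\form{a_2,\ldots,a_m} \subset q'$, whence $\form{a_1,\ldots,a_m} \subset \form{a_1}\perp q' \simeq \anispart{q}$, i.e.\ $\anispart{p} \subset \anispart{q}$.

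The final ``in particular'' clause follows formally: $\anispart{p}\simeq\anispart{q}$ forces $D(p)=D(q)$ by the first part applied both ways; conversely $D(p)=D(q)$ gives mutual containment $\anispart{p}\subset\anispart{q}$ and $\anispart{q}\subset\anispart{p}$, and two anisotropic forms each a subform of the other must be isometric by a dimension count ($\dim(\anispart{p})=\dim_{F^2}D(p)=\dim_{F^2}D(q)=\dim(\anispart{q})$, and a subform of the same dimension is the whole form).

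\textbf{Main obstacle.} The only genuinely non-formal point is the inductive step in the converse direction: upgrading ``$a_1,\ldots,a_m$ are $F^2$-independent elements of $D(\anispart{q})$'' to ``$\form{a_1,\ldots,a_m}$ is a subform of $\anispart{q}$.'' The delicate bookkeeping is that after splitting off $\form{a_1}$, the remaining $a_i$ need not literally lie in $D(q')$ but only in $F^2 a_1 + D(q')$; I expect to resolve this cleanly by the flexibility in choosing the $F^2$-basis of $D(p)$ (equivalently, by column operations on the diagonalization of $\anispart{p}$), which is exactly where the $F^2$-linear structure of $D(q)$ does the real work. Everything else is direct unwinding of the definitions recalled in Section 2.
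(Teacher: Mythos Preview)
Your argument is correct. Note, however, that the paper does not supply its own proof of this proposition; it is simply quoted from \cite[Prop.~8.1]{HoffmannLaghribi1}, so there is no proof in the paper to compare against.

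Two remarks on your write-up. First, the ``obstacle'' you flag in the inductive step dissolves exactly as you suggest, and it is worth making explicit why this is not circular: replacing $a_i$ by $a_i' = a_i + \lambda_i^2 a_1$ (with $\lambda_i \in F$) corresponds on the level of $\anispart{p}$ to the linear substitution $e_i \mapsto e_i + \lambda_i e_1$, which is an isometry because $p(e_i + \lambda_i e_1) = a_i + \lambda_i^2 a_1$ by additivity. So $\form{a_1,a_2,\ldots,a_m} \simeq \form{a_1,a_2',\ldots,a_m'}$ is established directly, not by appeal to the proposition being proved.

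Second, there is a slicker route that bypasses the induction. Since $q$ is quasilinear, the polar form $b_q$ vanishes identically, so $q \colon V_q \to F$ is additive with $q(\lambda v) = \lambda^2 q(v)$; when $q$ is anisotropic this map is a bijection onto $D(q)$. For anisotropic $p,q$ with $D(p) \subseteq D(q)$, one then checks that $q^{-1} \circ p \colon V_p \to V_q$ is an injective $F$-linear map (additivity of $q^{-1}$ is clear, and $q^{-1}(\lambda^2 a) = \lambda\, q^{-1}(a)$ gives homogeneity) which realizes $p$ as the restriction of $q$ to a subspace. Because $b_q = 0$, any vector-space complement of that subspace is automatically orthogonal, so this restriction is a subform in the sense of \S\ref{secbasic}. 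Your inductive argument is this map written out one basis vector at a time.
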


We will need the following easy consequence of Proposition \ref{propsubforms}:

\begin{corollary}[{cf. \cite[Prop. 8.1]{HoffmannLaghribi1}}] \label{corexcellence}Let $q$ be a quasilinear quadratic form over $F$ and let $L$ be a field extension of $F$. Then there exists a subform $r \subset \anispart{q}$ such that $r_L \simeq \anispart{(q_L)}$.
\begin{proof} $D(q_L)$ is obviously spanned as an $L^2$-vector space by elements of $D(q)$. If $a_1,\hdots,a_n \in D(q)$ form a basis of $D(q_L)$ over $L^2$, then Proposition \ref{propsubforms} shows that $r = \form{a_1,\hdots,a_n}$ is a subform of $\anispart{q}$ having the required property. \end{proof} \end{corollary}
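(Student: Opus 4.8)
The plan is to deduce the statement directly from the Representation Theorem for quasilinear forms (Proposition \ref{propsubforms}). The key structural fact being exploited is that, since $q_L$ is quasilinear, the value set $D(q_L)$ is an $L^2$-linear subspace of $L$, and $\anispart{(q_L)}$ is characterized up to isometry (among anisotropic forms over $L$) as the unique one with value set equal to $D(q_L)$. So it suffices to produce a subform $r \subset \anispart{q}$ with $D(r_L) = D(q_L)$; Proposition \ref{propsubforms}, applied over $L$, will then give $r_L \simeq \anispart{(q_L)}$.

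First I would observe that $D(q_L)$ is spanned as an $L^2$-vector space by the elements of $D(q)$: writing $q \simeq \form{a_1,\ldots,a_n}$ with $a_i \in D(q) \subseteq F \subseteq L$, every value $q_L(\lambda_1,\ldots,\lambda_n) = \sum \lambda_i^2 a_i$ is an $L^2$-combination of the $a_i \in D(q)$, and conversely each $a_i$ lies in $D(q_L)$. Next, since $D(q_L)$ is a finite-dimensional $L^2$-subspace of $L$ and is spanned by the subset $D(q)$, I can extract from $D(q)$ a finite basis $a_1,\ldots,a_m$ of $D(q_L)$ over $L^2$. Set $r = \form{a_1,\ldots,a_m}$. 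Since each $a_j$ lies in $D(q) = D(\anispart{q})$, Proposition \ref{propsubforms} (together with the fact that $r$ is already anisotropic over $F$ — or, if not, one replaces $r$ by its anisotropic part without changing its value set) shows that $r \subset \anispart{q}$. Finally, by construction $D(r_L)$ is the $L^2$-span of $\{a_1,\ldots,a_m\}$, which is exactly $D(q_L)$; applying the "in particular" part of Proposition \ref{propsubforms} over the field $L$ then yields $r_L \simeq \anispart{(q_L)}$.

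There is essentially no hard step here — the whole content is packaged in Proposition \ref{propsubforms} and in the linear-algebraic characterization of $D(q_L)$ as an $L^2$-space. The one point requiring a small amount of care is that $r = \form{a_1,\ldots,a_m}$, as defined over $F$, need not a priori be anisotropic over $F$ even though its base-change to $L$ will be; but its value set over $F$ is contained in $D(q) = D(\anispart{q})$, so by Proposition \ref{propsubforms} its anisotropic part embeds in $\anispart{q}$, and passing to that anisotropic part changes neither $D(r)$ nor $D(r_L)$. In the generic situation the $a_i$ chosen from a basis will in fact already be anisotropic, so this is a formality rather than an obstacle.
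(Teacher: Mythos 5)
Your argument is correct and is essentially the paper's own proof: span $D(q_L)$ over $L^2$ by elements of $D(q)$, pick a basis $a_1,\ldots,a_m$ from $D(q)$, and apply Proposition \ref{propsubforms} over both $F$ and $L$. The one caveat you raise — that $r=\form{a_1,\ldots,a_m}$ might not be anisotropic over $F$ — is actually vacuous: the $a_i$ are $L^2$-linearly independent by choice, hence a fortiori $F^2$-linearly independent (as $F^2\subseteq L^2$), so $r$ is automatically anisotropic over $F$ and the extra "replace by anisotropic part" step is unnecessary.
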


\subsection{Inseparable quadratic extensions} We will make use of some basic results concerning the behaviour of quasilinear quadratic forms under scalar extension to an inseparable quadratic extension of the ground field. We first note the following statement, which is a direct consequence of the additivity property of quasilinear quadratic forms.

\begin{lemma}[{cf. \cite[Lem. 3.8]{Scully2}}] \label{lemvaluesquadratic} Let $q$ be a quasilinear quadratic form over $F$, and let $K = F(\sqrt{a})$ for some $a \in F \setminus F^2$. Then $D(q_K) = D(q) + aD(q)$ $($as subsets of $K)$. \end{lemma}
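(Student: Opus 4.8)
The plan is to exploit the additivity of quasilinear quadratic forms together with the fact that $\{1,\sqrt{a}\}$ is an $F$-basis of $K$, so that the whole statement reduces to a short computation. I would first record the two inclusions separately. For $D(q) + aD(q) \subseteq D(q_K)$: if $d = q(v)$ for some $v \in V_q$, then certainly $d \in D(q_K)$, and moreover $ad = aq(v) = q_K(\sqrt{a}\cdot v)$ (scaling $v$ by the scalar $\sqrt{a} \in K$), so $ad \in D(q_K)$ as well; since $q_K$ is again quasilinear, $D(q_K)$ is closed under addition, and hence contains every element $d + ad'$ with $d,d' \in D(q)$, i.e.\ all of $D(q) + aD(q)$.

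For the reverse inclusion $D(q_K) \subseteq D(q) + aD(q)$, I would write an arbitrary vector $w \in V_q \otimes_F K$ as $w = u + \sqrt{a}\cdot v$ with $u,v \in V_q$, using the $F$-basis $\{1,\sqrt{a}\}$ of $K$. The only place where the characteristic-$2$ and quasilinearity hypotheses really enter is the additivity $q_K(u + \sqrt{a}\cdot v) = q_K(u) + q_K(\sqrt{a}\cdot v)$, valid because $q$ — and therefore $q_K$ — is quasilinear (equivalently, the polar form $b_q$, hence $b_{q_K}$, vanishes identically). This gives $q_K(w) = q(u) + a\,q(v) \in D(q) + aD(q)$, and since $w$ was arbitrary the inclusion follows.

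Alternatively, one can argue via a diagonalization $q \simeq \form{a_1,\dots,a_n}$: then $D(q)$ is the $F^2$-span of $a_1,\dots,a_n$ inside $F$, while $D(q_K)$ is the $K^2$-span of the same elements inside $K$; since $(x + y\sqrt{a})^2 = x^2 + a y^2$ in characteristic $2$, one has $K^2 = F^2 + aF^2$, and substituting gives $D(q_K) = \sum_{i} a_i(F^2 + aF^2) = D(q) + aD(q)$ directly. Either route is entirely routine, so there is no genuine obstacle here; the one thing to keep track of is that squaring is additive in characteristic $2$ (so no cross terms appear when a $K$-vector is expanded along $\{1,\sqrt{a}\}$), which is exactly the feature responsible for the statement and which fails for non-quasilinear forms.
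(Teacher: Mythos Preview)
Your argument is correct and is exactly what the paper has in mind: the lemma is stated there without proof, merely as ``a direct consequence of the additivity property of quasilinear quadratic forms,'' and both of your computations make that remark precise. There is nothing to add.
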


The following result elaborates upon Lemma \ref{corexcellence} in the case where $L$ is an inseparable quadratic extension of $F$, and is directly analogous to a standard result in the characteristic $\neq 2$ theory (see \cite[Cor. 22.12]{EKM}):

\begin{lemma}[{cf. \cite[Prop. 5.10]{Hoffmann2}}] \label{lemquadraticextensions} Let $q$ be an anisotropic quasilinear quadratic form over $F$ and let $K = F(\sqrt{a})$ for some $a \in F \setminus F^2$. If $\witti{0}{q_K} = n$, and $r \subset q$ is such that $r_{K} \simeq \anispart{(q_K)}$, then there exist elements $b_1,\hdots,b_n \in D(r)$ such that 
$$ q \simeq r \perp a\form{b_1,\hdots,b_n}. $$ \end{lemma}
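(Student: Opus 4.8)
The plan is to leverage Lemma \ref{lemvaluesquadratic} together with Proposition \ref{propsubforms} and a dimension count. Write $q \simeq r \perp r'$ for some complementary subform $r'$ of $q$ of dimension $\mathrm{dim}(q) - \mathrm{dim}(r)$. Since $r_K \simeq \anispart{(q_K)}$ and $q$ is anisotropic, we have $\mathrm{dim}(r) = \mathrm{dim}(q) - \witti{0}{q_K} = \mathrm{dim}(q) - n$ (using $\mathrm{dim}(\anispart{(q_K)}) = \mathrm{dim}(q_K) - \witti{0}{q_K}$ from the characteristic-$2$ part of \S\ref{secisodecomp}); hence $\mathrm{dim}(r') = n$. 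So the claim amounts to showing that $r'$ can be chosen of the special form $a\form{b_1,\hdots,b_n}$ with all $b_i \in D(r)$; equivalently, that each value of $r'$ is $a$ times a value of $r$. The first step is therefore to analyze $D(q_K)$: by Lemma \ref{lemvaluesquadratic}, $D(q_K) = D(q) + aD(q)$, and since $q$ is anisotropic (so $q = \anispart{q}$) while $r_K \simeq \anispart{(q_K)}$ gives $D(r_K) = D(q_K)$ by Proposition \ref{propsubforms}, we obtain $D(r) + aD(r) = D(q) + aD(q)$ as $F^2$-subspaces of $K = F \oplus aF$.

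Next I would extract the ``$a$-component'' of this equality. Viewing $K$ as the $F$-vector space $F \oplus aF$ and projecting onto the second summand, the equality $D(r) + aD(r) = D(q) + aD(q)$ forces $D(r) = D(q) \cap F$-projection-to-first-coordinate... more carefully: since $D(r) \subseteq D(q)$ and both are $F^2$-subspaces of $F$, and since $D(q) + aD(q) = D(r) + aD(r)$ inside $K$, comparing the ``$F$-part'' gives $D(q) = D(r)$ would be too strong; rather, the correct extraction is that for every $c \in D(q)$, we have $ac \in D(q) + aD(q) = D(r) + aD(r)$, so $ac = x + ay$ with $x, y \in D(r)$, forcing $x = 0$ (comparing coefficients in $F \oplus aF$) and $c = y \in D(r)$ — wait, that would give $D(q) \subseteq D(r)$, hence $D(q) = D(r)$, hence $r \simeq q$ and $n = 0$, which is false in general. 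The resolution is that $D(r) + aD(r)$ need not be a \emph{direct} sum decomposition-compatible object, because elements of $D(r) \subseteq F$ can coincide with elements of $aD(r)$ only if they are $0$ — actually $D(r), aD(r)$ do live in complementary $F$-summands, so the sum expression $x + ay$ is genuinely unique. The subtlety I am missing is that $D(q_K) = D(q) + aD(q)$ is correct but $\anispart{(q_K)}$ representing exactly $D(q_K)$ does \emph{not} mean $D(r) = D(q_K) \cap F$; I should instead argue directly on the form level rather than purely via value sets.

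So the better plan is: proceed by induction on $n = \witti{0}{q_K}$, the case $n = 0$ being trivial (then $r = q$ and the orthogonal complement is the zero form). For the inductive step with $n \geq 1$, since $q_K$ is isotropic pick a nonzero isotropic vector; by the additivity of quasilinear forms over $K$ and Lemma \ref{lemvaluesquadratic}, find an element of the form $u + a v \in V_q \otimes_F F \oplus a(V_q \otimes_F F) = V_q \otimes_F K$ with $q_K(u + av) = q(u) + a^2 q(v) = q(u) + a q(v)$ — wait, $a^2$, not $a$; but $a^2 \in F^2 \subseteq$... hmm, $a^2$ is just a square in $F$. Let me restate: an isotropic vector of $q_K$ can be written $w = u + \sqrt{a}\, v$ with $u, v \in V_q \otimes_F F$ and, using quasilinearity, $q_K(w) = q(u) + a\, q(v) = 0$, so $q(u) = a\, q(v)$ in $F$; since $q$ is anisotropic and $a \notin F^2$, this forces $q(u) = q(v) = 0$ unless... no: $q(u), q(v) \in F$ and $q(u) = a q(v)$ with $a \notin F^2$ forces both sides to vanish, so $u, v$ are isotropic for $q$, contradicting anisotropy of $q$ unless $u = v = 0$ — but then $w = 0$. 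This shows $q_K$ anisotropic whenever $q$ is, i.e. $n = 0$ always. That is plainly wrong ($q_K$ can certainly be isotropic for quasilinear $q$, e.g. $q = \form{1, a}$ over $F(\sqrt a)$). The error: $q_K(w)$ is computed with the \emph{quasilinear} extension, and $q_K(\sqrt a\, v) = (\sqrt a)^2 q(v) = a\, q(v)$ — that part is right — but $q_K(u + \sqrt a\, v) = q_K(u) + q_K(\sqrt a\, v) = q(u) + a q(v)$ only if the cross term vanishes, which it does by quasilinearity. So indeed $q_K$ isotropic $\iff$ $\exists (u,v) \ne 0$ with $q(u) = a q(v)$; for $q = \form{1,a}$, take $u = (0, \bar a), v = (\bar 1, 0)$ wait let me not. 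The point is $q(u) = a q(v)$ with $u, v$ \emph{not both} making both sides zero \emph{is} possible — $q(u), q(v)$ are specific field elements and the equation $q(u) = a\,q(v)$ can hold with $q(v) \ne 0$. I was wrong to say it forces vanishing; it forces $q(u)/q(v) = a$, perfectly possible. Good — so the induction does work: from such $(u,v)$ with (WLOG) $q(v) \ne 0$, extract a split-off piece. \textbf{The main obstacle} will be making this extraction precise: showing that after splitting off one hyperbolic-type summand $\form{0}$ from $q_K$, the residual form descends to $q \simeq \langle r \text{-piece}\rangle \perp a\langle b_1 \rangle \perp (\text{rest})$ with $b_1 \in D(\text{the } r\text{-piece})$, set up so that the induction hypothesis applies to the ``rest'' and reassembles the $b_i$. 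Concretely I expect to show $q \simeq q'' \perp a\form{b_1}$ with $b_1 = q(v) \in D(q'')$ and $\witti{0}{q''_K} = n - 1$, then apply induction to $q''$, using Corollary \ref{corexcellence}/Proposition \ref{propsubforms} to match up the subform $r$ throughout. The bookkeeping to guarantee $b_1 \in D(r)$ (not merely $b_1 \in D(q)$) — for which one uses that $r_K \simeq \anispart{(q_K)}$ pins down $D(r) = D(q_K) = D(q) + aD(q)$ intersected appropriately — is the delicate point; cf.\ the proof of \cite[Prop.\ 5.10]{Hoffmann2}, which this lemma is modelled on.
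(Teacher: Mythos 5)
The paper does not prove this lemma itself; it is cited to \cite[Prop.~5.10]{Hoffmann2}. So the question is simply whether your proposal constitutes a correct proof, and as written it does not: the value-set route is abandoned on account of a false mental picture, and the inductive route is left with an explicitly acknowledged gap at its central step.

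The root of the confusion in the first half is the identification ``$K = F \oplus aF$'' and the ensuing attempt to ``compare coefficients''. Since $a \in F^*$, one has $aF = F$, so this is not a decomposition of $K$ (or of anything). Even with the evident typo corrected to $K = F \oplus \sqrt{a}F$, the argument cannot be run, because by Lemma~\ref{lemvaluesquadratic} the sets $D(q_K) = D(q)+aD(q)$ and $D(r_K) = D(r)+aD(r)$ are $F^2$-subspaces of $F$ itself (the $\sqrt{a}$-component of every value drops out, as $q_K(u + \sqrt{a}v) = q(u) + aq(v) \in F$). There is therefore no ``$a$-part'' to project onto; $D(r)$ and $aD(r)$ do not sit in complementary summands of $K$, contrary to what you asserted. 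Your claim that ``the sum expression $x + ay$ is genuinely unique'' happens to be true here, but not for the reason you gave.

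In fact the value-set approach you abandoned does work, and the missing ingredient is precisely the directness of $D(r) + aD(r)$, which one proves by a dimension count rather than by a spurious coordinate decomposition: since $r_K \simeq \anispart{(q_K)}$ is anisotropic, $D(r_K)$ has $K^2$-dimension $\dim r$, hence $F^2$-dimension $2\dim r$ (as $[K^2:F^2] = 2$ because $a \notin F^2$); on the other hand $\dim_{F^2} D(r) = \dim_{F^2} aD(r) = \dim r$, so $D(r) \cap aD(r) = 0$. Once this is in hand, choose an $F^2$-complement $B$ of $D(r)$ in $D(q)$ with basis $c_1,\dots,c_n$; each $c_i \in D(q) \subseteq D(q_K) = D(r) \oplus aD(r)$ decomposes uniquely as $c_i = x_i + a b_i$ with $x_i, b_i \in D(r)$. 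Then $ab_i = c_i + x_i \in D(q)$, so $D(r) + a\,\mathrm{span}_{F^2}\{b_i\} \subseteq D(q)$, and conversely $D(q) = D(r) + B \subseteq D(r) + a\,\mathrm{span}_{F^2}\{b_i\}$; equality plus a dimension count shows the $b_i$ are $F^2$-independent and the sum is direct, so $r \perp a\form{b_1,\dots,b_n}$ is anisotropic with value set $D(q)$, and Proposition~\ref{propsubforms} gives $q \simeq r \perp a\form{b_1,\dots,b_n}$.

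The inductive route you sketch second (split off $q \simeq q(v)\form{1,a} \perp q''$ from an isotropic vector $u + \sqrt{a}v$) is plausible but you have left the decisive step open yourself: one must ensure $b_1 = q(v) \in D(r)$ (not merely $D(q)$) and arrange the decomposition so that $r$ survives as a subform of the residual form $q(v)\form{1} \perp q''$. An arbitrary isotropic vector of $q_K$ need not cooperate with the given $r$; the choice of $(u,v)$ has to be constrained by $r$, and that constraint is the entire content of the lemma. So as it stands the proposal identifies the correct ingredients but does not close the argument by either route.
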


\subsection{Quasi-Pfister forms} \label{secqpforms} Let $\pi$ be a quasi-Pfister form over $F$ (see \S \ref{secPfister} above). Then $\pi$ is round, i.e, $D(q) = G(q)$. In particular, the set $D(q)$ is not only an $F^2$-linear subspace of $F$, it is an $F^2$-linear \emph{subfield} of $F$ (recall that the similarity factors of any form over $F$ constitute a subgroup of $F^*$). Moreover, this property characterises anisotropic quasi-Pfister forms over $F$, since such forms are characterised by their roundness.

\subsection{The norm form} (see \cite[\S 8]{HoffmannLaghribi1}, \cite[\S 4]{Hoffmann2}) \label{secnormform} Let $q$ be a quasilinear quadratic form over $F$. The \emph{norm field} of $q$, denoted $N(q)$, is defined as the smallest subfield of $F$ containing all ratios of non-zero elements of $D(q)$. Evidently, we have $N(aq) = N(q)$ for all $a \in F^*$. Note also that $F^2 \subseteq N(q)$ by definition. Thus, by \S \ref{secqpforms}, there exists, up to isometry, a unique anisotropic quasi-Pfister form $\normform{q}$ over $F$ with the property that $D(\normform{q}) = N(q)$. This form is called the \emph{norm form} of $q$. Note that if $a$ is any non-zero element of $F$ represented by $q$, then $a\anispart{q} \subset \normform{q}$. Indeed, this follows from Proposition \ref{propsubforms} in light of the obvious inclusion $D(a\anispart{q}) = aD(q) \subseteq N(q) = D(\normform{q})$. The dimension of $\normform{q}$ is an important invariant known as the \emph{norm degree} of $q$. In what follows, it will be more convenient to work with its base-2 logarithm which we denote by $\mathrm{lndeg}(q)$. In other words, $\mathrm{lndeg}(q) = \mathrm{log}_2\big(\mathrm{dim}(\normform{q})\big) = \mathrm{log}_2[N(q):F^2]$. We will need the following lemma:

\begin{lemma}[{cf. \cite[Thm. 8.11 (i)]{HoffmannLaghribi1}}] \label{lemndegtower} Let $p$ be a quasilinear quadratic form over $F$. Then $\mathrm{lndeg}(p_i) = \mathrm{lndeg}(p) - i$ for all $1 \leq i \leq h(p)$. \end{lemma}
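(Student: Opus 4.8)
Lemma \ref{lemndegtower}: for a quasilinear form $p$ over $F$ of characteristic 2, $\mathrm{lndeg}(p_i) = \mathrm{lndeg}(p) - i$ for all $1 \leq i \leq h(p)$, where $p_i$ is the $i$-th higher anisotropic kernel in the Knebusch splitting tower.

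The plan is to reduce the statement to the single-step assertion $\mathrm{lndeg}(p_i) = \mathrm{lndeg}(p_{i-1}) - 1$, which then yields the general formula by an immediate induction on $i$ (using $p_0 = \anispart{p}$, for which $N(p_0) = N(p)$ since $D(p_0) = D(p)$). So fix $i$ and write $K = F_{i-1}$, $\phi = p_{i-1} = \anispart{(p_K)}$, which is anisotropic over $K$; by definition $F_i = K(\phi)$ and $p_i = \anispart{(\phi_{K(\phi)})}$. Replacing $F$ by $K$ and $p$ by $\phi$, the task is: if $\phi$ is an anisotropic quasilinear form over $F$ with $\mathrm{dim}(\phi) \geq 2$ (so $\phi$ is not split) and $\psi := \anispart{(\phi_{F(\phi)})}$, then $\mathrm{lndeg}(\psi) = \mathrm{lndeg}(\phi) - 1$.

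First I would compute $N(\psi)$ in terms of $N(\phi)$. Pick $a \in D(\phi)^*$; then $a\phi \subset \normform{\phi}$ by the remark in \S\ref{secnormform}, and $N(a\phi) = N(\phi)$, so after scaling we may assume $\phi \subset \normform{\phi}$ and in fact $1 \in D(\phi)$; note $N(\phi) = D(\normform{\phi})$ is an $F^2$-subfield of $F$. Using the explicit description $F(\phi) \cong F(S)\big(\sqrt{a_0^{-1}(a_1 + a_2 S_2^2 + \cdots)}\big)$ from \S\ref{secfunctionfields}, one sees that over $F(\phi)$ the quadric acquires a new represented value: writing $\phi \simeq \form{a_0, a_1, \ldots, a_n}$ with $a_0 = 1$, the element $c := a_1 + a_2 S_2^2 + \cdots + a_n S_n^2 \in F(S) \subset F(\phi)$ is a square in $F(\phi)$, which forces $\phi_{F(\phi)}$ to be isotropic and identifies $D(\phi_{F(\phi)})$. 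Concretely, Lemma \ref{lemvaluesquadratic} (applied over the purely transcendental base $F(S)$, to which isotropy indices and norm fields are insensitive) gives $D(\psi) = D(\phi_{F(\phi)}) = D(\phi_{F(S)}) + \sqrt{c}\,D(\phi_{F(S)})$, and taking the field generated by ratios shows $N(\psi) = N(\phi_{F(S)})(\sqrt{c}) = N(\phi)\cdot F(S)(\sqrt{c})$. Since $N(\phi_{F(S)}) = N(\phi)\otimes_{F^2} F(S)^2$ has the same index over $F(S)^2$ as $N(\phi)$ over $F^2$, and adjoining $\sqrt{c}$ is a genuine degree-2 extension (because $c$ is represented by $\phi_{F(S)}$ hence lies in $N(\phi_{F(S)})$ which is a field not containing $\sqrt{c}$ — this uses anisotropy of $\phi$ to guarantee $c \notin F(S)^2$, equivalently $F(\phi) \neq F(S)$), we get $[N(\psi):F(\phi)^2] = \tfrac{1}{2}[N(\phi_{F(S)}):F(S)^2] = \tfrac{1}{2}[N(\phi):F^2]$, i.e. $\mathrm{lndeg}(\psi) = \mathrm{lndeg}(\phi) - 1$.

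The main obstacle I expect is the bookkeeping around the purely transcendental extension $F(S)$: one must be careful that the norm field behaves well under $F \rightsquigarrow F(S)$ (it does, because $D(\phi_{F(S)})$ is spanned over $F(S)^2$ by $D(\phi)$, so $[N(\phi_{F(S)}):F(S)^2] = [N(\phi):F^2]$), and that $\sqrt{c}$ really generates a quadratic extension of the \emph{norm field} $N(\phi_{F(S)})$, not merely of $F(S)^2$ — i.e. that $c \notin N(\phi_{F(S)})$. This last point is where anisotropy of $\phi$ is essential: if $c$ were a square in $N(\phi_{F(S)})$ one could deduce $\phi_{F(S)}$ is already isotropic, contradicting the stability of the isotropy index under purely transcendental extension (\S\ref{secbasic}). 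Granting this, the degree count is forced and the lemma follows; the alternative, and perhaps cleaner, route is simply to cite \cite[Thm. 8.11(i)]{HoffmannLaghribi1} as indicated, since that reference establishes exactly this behaviour of the norm degree along the splitting tower.
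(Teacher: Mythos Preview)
The paper does not actually prove this lemma; it simply records the statement with a pointer to \cite[Thm.~8.11(i)]{HoffmannLaghribi1}. So there is no argument in the paper to compare against, and your closing suggestion --- to cite that reference --- is exactly what the paper does.

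Your direct argument has the right architecture (reduce to a single step of the tower, pass to the rational-function base $F(S)$, and track the norm field through the quadratic extension $F(\phi)=F(S)(\sqrt{c})$), but the bookkeeping around $c$ versus $\sqrt{c}$ goes wrong in a way that matters. Lemma~\ref{lemvaluesquadratic} gives $D(\phi_{F(\phi)}) = D(\phi_{F(S)}) + c\,D(\phi_{F(S)})$, with $c$ rather than $\sqrt{c}$; in particular $D(\phi_{F(\phi)}) \subset F(S)$. Since the norm field is generated over $F(\phi)^2$ by ratios of these elements, one finds $N(\psi)\subset F(S)$ and hence $\sqrt{c}\notin N(\psi)$: in fact $N(\psi)=N(\phi)\cdot F(\phi)^2 = N(\phi_{F(S)})(c)=N(\phi_{F(S)})$, not $N(\phi_{F(S)})(\sqrt{c})$ as you wrote. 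The correct degree count is then
\[
[N(\psi):F(\phi)^2]\;=\;[\,N(\phi_{F(S)})\,:\,F(S)^2(c)\,]\;=\;\tfrac{1}{2}\,[N(\phi_{F(S)}):F(S)^2],
\]
using that $F(\phi)^2=F(S)^2(c)$ is a genuine degree-$2$ extension of $F(S)^2$ (since $c\notin F(S)^2$, which is where anisotropy of $\phi_{F(S)}$ enters) and that $c\in N(\phi_{F(S)})$ (because $c\in D(\phi_{F(S)})$ and $1\in D(\phi)$). Your parenthetical ``i.e.\ that $c\notin N(\phi_{F(S)})$'' is therefore backwards: what you need is precisely that $c$ \emph{does} lie in $N(\phi_{F(S)})$ but not in $F(S)^2$. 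With these corrections the one-step drop $\mathrm{lndeg}(\psi)=\mathrm{lndeg}(\phi)-1$ follows, and your induction then gives the lemma.
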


\subsection{Similarity factors} \label{secsimilarity} (see \cite[\S 6]{Hoffmann2}) Another direct consequence of Proposition \ref{propsubforms} is the following statement concerning similarity factors:

\begin{lemma}[{cf. \cite[Lem. 6.3]{Hoffmann2}}] \label{lemsimilarity} Let $q$ be a quasilinear quadratic form over $F$, and let $a \in F^*$. Then $a \in G(q)$ if and only if $aD(q) = D(q)$. \end{lemma}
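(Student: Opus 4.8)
The plan is to prove the two implications separately, in each case reducing to the representation theorem for quasilinear forms (Proposition~\ref{propsubforms}) once the totally isotropic part has been split off.

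First I would dispatch the forward direction, which is purely formal: if $a \in G(q)$, so that $aq \simeq q$, then the two forms represent exactly the same scalars, whence $D(q) = D(aq) = aD(q)$, the last equality being the definition of $aq$. Nothing is needed here beyond unwinding definitions.

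For the converse I would start from the hypothesis $aD(q) = D(q)$, which reads $D(aq) = D(q)$, and apply Proposition~\ref{propsubforms} to obtain $\anispart{(aq)} \simeq \anispart{q}$. The remaining task is to upgrade this isometry of anisotropic parts to an isometry $aq \simeq q$. Here I would use that scaling by a nonzero element leaves the isotropic structure untouched: a vector $v \in V_q$ is isotropic for $aq$ if and only if it is isotropic for $q$, so $aq$ and $q$ share the same totally isotropic subspaces; in particular $\witti{0}{aq} = \witti{0}{q}$, and, applying $a(-)$ to the decomposition $q \simeq \witti{0}{q}\cdot\form{0} \perp \anispart{q}$ recalled in \S\ref{secisodecomp}, one gets $aq \simeq \witti{0}{q}\cdot\form{0} \perp a\anispart{q}$ with $a\anispart{q}$ anisotropic, so $\anispart{(aq)} \simeq a\anispart{q}$. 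Chaining $q \simeq \witti{0}{q}\cdot\form{0}\perp\anispart{q} \simeq \witti{0}{q}\cdot\form{0}\perp a\anispart{q} \simeq aq$ then gives $a \in G(q)$, as wanted.

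The only point requiring any attention — and it is a very minor one — is this bookkeeping around the isotropic summand in the converse, which is forced on us because Proposition~\ref{propsubforms} is phrased in terms of anisotropic parts rather than the forms themselves. Once one records that $a \neq 0$ makes $\witti{0}{aq} = \witti{0}{q}$ and $\anispart{(aq)} \simeq a\anispart{q}$, the argument closes with no computation. In the anisotropic case, which is the only situation in which the lemma is invoked later on, even this is unnecessary: the statement is then a verbatim special case of Proposition~\ref{propsubforms}.
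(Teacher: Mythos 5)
Your proof is correct, and it is essentially the argument the paper intends: the paper gives no explicit proof, labeling Lemma~\ref{lemsimilarity} ``another direct consequence of Proposition~\ref{propsubforms},'' and your argument is exactly that consequence spelled out, including the small but necessary bookkeeping (that $v \mapsto b(v,v)$ for $b\neq 0$ preserves isotropic vectors, so $\witti{0}{aq}=\witti{0}{q}$ and $\anispart{(aq)}\simeq a\anispart{q}$) needed to pass from an isometry of anisotropic parts to an isometry of the full forms. Your closing remark that the lemma is a verbatim special case of Proposition~\ref{propsubforms} when $q$ is anisotropic is also accurate and explains why the paper treats this as immediate.
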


Note that because $q$ is quasilinear, the condition of the preceding lemma is closed under addition. Since $G(q) \setminus \lbrace 0 \rbrace$ is a subgroup of $F^*$, it follows that $G(q)$ is a subfield of $F$.

\subsection{Divisibility by quasi-Pfister forms} \label{secdivbyqp} Our main results on the isotropy behaviour of quasilinear quadratic forms over function fields of quadrics will be obtained by studying the extent to which certain forms are divisible by quasi-Pfister forms. With this in mind, it is useful to introduce some related terminology (see \cite[\S 2.11]{Scully3}): 

Given a quasilinear quadratic form $q$ over $F$, we define its \emph{divisibility index}, denoted $\mathfrak{d}_0(q)$, to be the largest non-negative integer $s$ such that $\anispart{q}$ is divisible by an $s$-fold quasi-Pfister form. The \emph{higher divisibility indices} of $q$, $\mathfrak{d}_1(q), \mathfrak{d}_2(q),\hdots,\mathfrak{d}_{h(q)}(q)$, are defined as the divisibility indices of the higher anisotropic kernel forms $q_1,q_2,\hdots,q_{h(q)}$, respectively. It will be worth recording the following easy calculation from \cite{Scully3}:

\begin{lemma}[{cf. \cite[Lem. 2.35]{Scully3}}] \label{lemdivinvariance} Let $q$ be a quasilinear quadratic form over $F$ and let $L$ be a purely transcendental field extension of $F$. Then $\mathfrak{d}_0(q_L) = \mathfrak{d}_0(q)$. In particular, the higher divisibility indices of any quasilinear quadratic form are insensitive to purely transcendental field extensions. \end{lemma}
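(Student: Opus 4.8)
The plan is to reduce to a simple transcendental extension $L = F(t)$ and then re-express $\mathfrak{d}_0$ in terms of the field of similarity factors, whose behaviour under $F(t)/F$ is transparent.

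\textbf{Reductions.} Since $\mathfrak{d}_0$ of a form depends only on its anisotropic part, and $\anispart{(q_L)} \simeq (\anispart{q})_L$ because isotropy indices are insensitive to purely transcendental extensions (\S\ref{secbasic}, \S\ref{secisodecomp}), I may assume $q$ anisotropic. One always has $\mathfrak{d}_0(q_L) \geq \mathfrak{d}_0(q)$: if $q \simeq \pi \otimes r$ with $\pi$ an $s$-fold quasi-Pfister form over $F$, then $q_L \simeq \pi_L \otimes r_L$, and $\pi_L$ remains anisotropic because $F$ is algebraic over $F^2$ whereas $L^2$ is purely transcendental over $F^2$, so the two are linearly disjoint over $F^2$. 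Conversely, a quasi-Pfister divisor of $q_L$ is already defined over a finitely generated purely transcendental subextension $L_0 = F(t_1,\dots,t_n)$ of $L$, so $\mathfrak{d}_0(q_{L_0}) \geq \mathfrak{d}_0(q_L)$; combined with the previous inequality for $L/L_0$ this gives $\mathfrak{d}_0(q_L) = \mathfrak{d}_0(q_{L_0})$. Writing $F(t_1,\dots,t_n) = F(t_1,\dots,t_{n-1})(t_n)$ and inducting on $n$, it remains only to prove $\mathfrak{d}_0(q_{F(t)}) = \mathfrak{d}_0(q)$ for anisotropic $q$ over an arbitrary base field $F$ of characteristic $2$.

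\textbf{The divisibility index via similarity factors.} The key is the identity $\mathfrak{d}_0(\phi) = \log_2[G(\phi):F^2]$ for any anisotropic quasilinear form $\phi$ over $F$. Indeed, $G(\phi)$ is a subfield of $F$ containing $F^2$ (\S\ref{secsimilarity}) contained in $N(\phi)$ -- one has $a = (ac)/c$ for $a \in G(\phi)$ nonzero and $c \in D(\phi)$ nonzero -- so $[G(\phi):F^2]$ is finite and a power of $2$. If $\phi \simeq \pi \otimes r$ with $\pi$ an anisotropic $s$-fold quasi-Pfister form, then each nonzero element of $D(\pi) = G(\pi)$ (roundness, \S\ref{secPfister}) is a similarity factor of $\phi$, so $D(\pi) \subseteq G(\phi)$ and $2^s = [D(\pi):F^2] \leq [G(\phi):F^2]$. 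Conversely, fixing a $2$-basis $g_1,\dots,g_d$ of the purely inseparable extension $G(\phi)/F^2$ (so $d = \log_2[G(\phi):F^2]$), the form $\pfister{g_1,\dots,g_d}$ is an anisotropic $d$-fold quasi-Pfister form with $D(\pfister{g_1,\dots,g_d}) = G(\phi)$; as $D(\phi)$ is then a $G(\phi)$-subspace of $F$ by Lemma \ref{lemsimilarity}, Proposition \ref{propsubforms} shows $\phi \simeq \pfister{g_1,\dots,g_d} \otimes \form{c_1,\dots,c_m}$ for any $G(\phi)$-basis $c_1,\dots,c_m$ of $D(\phi)$. Hence $\mathfrak{d}_0(\phi) = d$.

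\textbf{Behaviour of $G$ under $F(t)/F$.} It now suffices to show $G(q_{F(t)}) = G(q) \cdot F(t)^2$: since $F$, and hence $G(q) \subseteq F$, is linearly disjoint from $F(t)^2 = F^2(t^2)$ over $F^2$ -- $t^2$ being transcendental over $F$ -- this gives $\mathfrak{d}_0(q_{F(t)}) = \log_2[G(q)F(t)^2 : F(t)^2] = \log_2[G(q):F^2] = \mathfrak{d}_0(q)$. The inclusion $\supseteq$ is clear. For $\subseteq$: by the additivity of $q$, $D(q_{F(t)})$ is the $F(t)^2$-span of $D(q)$, so all values of $q_{F(t)}$, and hence all their ratios, lie in the field $F \cdot F(t)^2 = F(t^2)$; thus $G(q_{F(t)}) \subseteq N(q_{F(t)}) \subseteq F(t^2)$. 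Fix an $F^2$-basis $\{m_\alpha\}$ of $F(t)^2$; by the linear disjointness above it is also an $F$-basis of $F(t^2)$, and it identifies $D(q_{F(t)})$ with $\bigoplus_\alpha D(q)\,m_\alpha$. Write $a \in G(q_{F(t)})$ nonzero as $a = \sum_\alpha a_\alpha m_\alpha$ with $a_\alpha \in F$. The containment $aD(q) \subseteq D(q_{F(t)})$ forces $a_\alpha D(q) \subseteq D(q)$ for every $\alpha$; as multiplication by a nonzero $a_\alpha$ is an injective $F^2$-linear self-map of the finite-dimensional space $D(q)$, Lemma \ref{lemsimilarity} gives $a_\alpha \in G(q)$, so $a \in G(q) \cdot F(t)^2$.

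\textbf{Main difficulty, and the higher-index statement.} The reductions and the reformulation of $\mathfrak{d}_0$ are routine; the real content is the computation $G(q_{F(t)}) = G(q) \cdot F(t)^2$ -- that the similarity factors of $q$ over $F(t)$ are, up to squares, already similarity factors over $F$ -- and the care needed lies entirely in tracking which products of subfields of $F(t)$ are linearly disjoint over $F^2$. Finally, the assertion about higher divisibility indices follows by applying the first statement successively along the Knebusch splitting tower of $q$, using the standard fact that when $q_L$ is anisotropic the higher anisotropic kernels of $q_L$ are the base changes of those of $q$ to suitable purely transcendental extensions (which reduces, step by step, to the fact that the function field of a quadric base-changed along a purely transcendental extension is purely transcendental over the original function field -- cf. \cite{Hoffmann2}, \cite{Scully3}).
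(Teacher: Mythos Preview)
The paper does not actually prove this lemma; it merely records the statement with a citation to \cite[Lem.~2.35]{Scully3}. So there is no in-paper argument to compare against, and the relevant question is simply whether your proof is correct. It is.

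Your reformulation $\mathfrak{d}_0(\phi) = \log_2[G(\phi):F^2]$ is essentially a repackaging of Proposition~\ref{propdivisibility} (an anisotropic quasi-Pfister form $\pi$ divides $\phi$ iff $D(\pi) \subseteq G(\phi)$), together with the fact from \S\ref{secqpforms} that any finite subfield $F^2 \subseteq K \subseteq F$ is $D(\pi)$ for a unique anisotropic quasi-Pfister $\pi$. With this in hand, the substantive step is indeed the identity $G(q_{F(t)}) = G(q)\cdot F(t)^2$, and your argument for it is clean: the equality $F(t)^2 = F^2(t^2)$ and the linear disjointness of $F$ and $F^2(t^2)$ over $F^2$ give both $F\cdot F(t)^2 = F(t^2)$ and the direct-sum decomposition $D(q_{F(t)}) = \bigoplus_\alpha D(q)\,m_\alpha$, from which the coefficient-wise containment $a_\alpha D(q) \subseteq D(q)$ follows. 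The only point that deserves a word more of care is the spanning assertion $F\cdot F^2(t^2) = F(t^2)$: for $h \in F[t^2]$ one has $h^2 \in F^2[t^2]$, so $1/h = h/h^2 \in F\cdot F^2(t^2)$, and since $F\cdot F^2(t^2)$ is visibly a subring this gives all of $F(t^2)$. Your final paragraph on the higher indices is also fine; the needed fact that $L(q_0)$ is purely transcendental over $F(q_0)$ when $L/F$ is purely transcendental is standard (and is implicit in the references you cite).
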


We will need some further observations regarding the above notion of divisibility. The first of these is analogous to a standard result (\cite[Cor. 2.1.11]{Kahn}) regarding divisibility of non-degenerate quadratic forms by Pfister forms over fields of characteristic $\neq 2$:

\begin{lemma}[{cf. \cite[Prop. 4.19]{Hoffmann2}}] \label{lemanisdivisibility} Let $q$ be a quasilinear quadratic form and $\pi$ an anisotropic quasi-Pfister form over $F$. If $q$ is divisible by $\pi$, then $\anispart{q}$ is also divisible by $\pi$. \end{lemma}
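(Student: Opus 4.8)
The plan is to reduce the problem to a statement about the value sets $D(q)$ and $D(\anispart{q})$ using the representation theorem (Proposition \ref{propsubforms}), and to exploit the fact that an anisotropic quasi-Pfister form is characterized by roundness. Write $\pi = \pfister{a_1,\hdots,a_n}$. By hypothesis there is a quasilinear form $r$ over $F$ with $q \simeq \pi \otimes r$; choosing a diagonalization $r \simeq \form{c_1,\hdots,c_m}$, this says $D(q)$ is the $F^2$-span of $\lbrace a_I c_j : I \subseteq \lbrace 1,\hdots,n\rbrace,\ 1 \leq j \leq m \rbrace$, where $a_I = \prod_{i \in I} a_i$. Equivalently, $D(q)$ is closed under multiplication by $D(\pi) = G(\pi)$, which is a field containing $F^2$ (see \S\ref{secqpforms}). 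I want to produce a diagonalization of $\anispart{q}$ whose entries can be grouped into $D(\pi)$-cosets, which by Proposition \ref{propsubforms} is exactly what divisibility by $\pi$ amounts to on the level of value sets.

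First I would observe that, since $\pi$ is anisotropic and $D(\pi)$ is a field, $D(q)$ is naturally a vector space over $D(\pi)$, and likewise $D(\anispart{q}) = D(q)$ (the anisotropic part has the same value set, by \S\ref{secisodecomp}) is a $D(\pi)$-subspace of $F$ — here I use that $D(q)$ is already an $F^2$-subspace and is stable under multiplication by the generators $a_i$, hence by all of $D(\pi)$. Next, pick any element $d \in D(\anispart{q})$; then $d \cdot D(\pi) \subseteq D(\anispart{q})$, and $\form{d} \otimes \pi$ is anisotropic (it is similar to $\pi$) with value set $d \cdot D(\pi)$, so by Proposition \ref{propsubforms} we get $\form{d}\otimes \pi \subset \anispart{q}$, say $\anispart{q} \simeq (\form{d}\otimes\pi) \perp q'$. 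The value set of $q'$ is again a $D(\pi)$-subspace of $F$ strictly smaller than $D(\anispart{q})$ (strictly, because dimensions drop and $\anispart{q}$ is anisotropic), so iterating this — formally, inducting on $\mathrm{dim}(\anispart{q})$, with base case $\mathrm{dim}(\anispart{q}) = 0$ trivial — produces $\anispart{q} \simeq \form{d_1,\hdots,d_k} \otimes \pi$ for suitable $d_1,\hdots,d_k \in D(\anispart{q})$, which is the desired divisibility.

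The one point requiring a little care — and the main obstacle — is the inductive step: I must check that the orthogonal complement $q'$ of $\form{d}\otimes\pi$ inside $\anispart{q}$ can be taken to be \emph{anisotropic} of value set still a $D(\pi)$-space, so that the induction hypothesis applies to $q'$ and to its quasi-Pfister divisor $\pi$. Anisotropy of $q'$ is immediate since it is a subform of the anisotropic form $\anispart{q}$. That $D(q')$ is a $D(\pi)$-subspace is the genuinely substantive claim: one shows $D(\anispart{q}) = (d\cdot D(\pi)) \oplus D(q')$ as $F^2$-spaces is compatible with the $D(\pi)$-action. This follows because, having realized $\form{d}\otimes\pi$ as an \emph{orthogonal summand}, the ambient $D(\pi)$-module structure on $D(\anispart{q})$ restricts to each summand — here anisotropy of $\anispart{q}$ is essential, as it guarantees the decomposition $V_{\anispart{q}} = V_{\form{d}\otimes\pi} \oplus V_{q'}$ is also a direct sum decomposition of value sets with no cancellation. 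Alternatively, and perhaps more cleanly, one can avoid choosing the complement altogether and argue purely with value sets: let $D = D(\anispart{q})$, a finite-dimensional $D(\pi)$-space; choose a $D(\pi)$-basis $d_1,\hdots,d_k$ of $D$; then $\rho := \form{d_1,\hdots,d_k}\otimes\pi$ is anisotropic (being a subform of $\anispart{q}$ by Proposition \ref{propsubforms}, since each $d_j D(\pi) \subseteq D$) with $D(\rho) = \sum_j d_j D(\pi) = D = D(\anispart{q})$, whence $\rho \simeq \anispart{q}$ by the "in particular" clause of Proposition \ref{propsubforms}, and $\rho$ is by construction divisible by $\pi$. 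This second route sidesteps the complement bookkeeping entirely, so I would present the argument that way, using only: (i) $D(q) = D(\anispart{q})$, (ii) $D(q)$ is stable under $D(\pi)$, and (iii) Proposition \ref{propsubforms}.
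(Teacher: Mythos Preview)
The paper does not actually give a proof of this lemma; it is stated with a citation to \cite[Prop.~4.19]{Hoffmann2} and no argument. So there is no ``paper's own proof'' to compare against, and your proposal should be judged on its own merits.

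Your second route is correct and is the right way to present it. The essential inputs are exactly the three you list: $D(\anispart{q}) = D(q)$; the fact that $D(q) = D(\pi \otimes r)$ is stable under multiplication by the field $D(\pi)$ (roundness of anisotropic quasi-Pfister forms, \S\ref{secqpforms}); and Proposition~\ref{propsubforms}. Choosing a $D(\pi)$-basis $d_1,\dots,d_k$ of $D(\anispart{q})$ and setting $\rho = \form{d_1,\dots,d_k}\otimes\pi$ gives $D(\rho) = D(\anispart{q})$, and then $\rho \simeq \anispart{q}$ once you know $\rho$ is anisotropic. One small point: your parenthetical justification for the anisotropy of $\rho$ (``being a subform of $\anispart{q}$ by Proposition~\ref{propsubforms}'') is circular as written, since Proposition~\ref{propsubforms} only embeds $\anispart{\rho}$, not $\rho$, into $\anispart{q}$. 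The clean fix is to compute directly: the $d_j$ are $D(\pi)$-linearly independent and $[D(\pi):F^2] = \dim(\pi)$, so the $F^2$-span of $\{d_j D(\pi)\}$ has $F^2$-dimension $k\cdot\dim(\pi) = \dim(\rho)$, which is precisely the anisotropy condition for a quasilinear form.

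Your first route (peeling off one copy of $d\cdot\pi$ at a time) does have the obstacle you identify: it is not automatic that an arbitrary orthogonal complement $q'$ has $D(q')$ stable under $D(\pi)$, and your sketch of why this should hold is not quite an argument. You are right to abandon it in favour of the basis approach, which avoids the issue entirely. Incidentally, once Proposition~\ref{propdivisibility} is available (it is stated immediately after this lemma, also by citation), there is an even shorter proof: divisibility of $q$ by $\pi$ gives $D(\pi) \subseteq G(q)$, and since $G(q) = G(\anispart{q})$ one applies Proposition~\ref{propdivisibility} to $\anispart{q}$. But your value-set argument is self-contained and does not depend on that later statement.
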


Next, it is well known that if $\mathrm{char}(F) \neq 2$, then an anisotropic quadratic form $q$ over $F$ is divisible by an anisotropic Pfister form $\pi$ if and only if $D(\pi_K) \subseteq G(q_K)$ for \emph{every} field extension $K/F$ (see \cite[Thm. 20.16 and Cor. 23.6]{EKM}). In the quasilinear setting, this condition can already be checked over the \emph{base field} (compare Proposition \ref{propsubforms}):

\begin{proposition}[{cf. \cite[Prop. 6.4]{Hoffmann2}, \cite[Cor. 2.20]{Scully3}}] \label{propdivisibility} Let $q$ be an anisotropic quasilinear quadratic form and $\pi$ an anisotropic quasi-Pfister form over $F$. Then $q$ is divisible by $\pi$ if and only if $D(\pi) \subseteq G(q)$. \end{proposition}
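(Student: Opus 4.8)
The plan is to prove the non-trivial direction: assuming $D(\pi) \subseteq G(q)$, we must show $q$ is divisible by $\pi$. (The converse is immediate, since if $q \simeq \pi \otimes r$ then $D(\pi) = G(\pi) \subseteq G(\pi \otimes r) = G(q)$, using the roundness of $\pi$ from \S\ref{secqpforms} and the obvious fact that similarity factors of a tensor factor are similarity factors of the product.) I would proceed by induction on $\mathrm{dim}(\pi) = 2^n$, the case $n=0$ being vacuous. For the inductive step, write $\pi \simeq \pfister{a_1,\hdots,a_n} \simeq \form{1,-a_n} \otimes \sigma$, where $\sigma = \pfister{a_1,\hdots,a_{n-1}}$; note $\sigma$ is anisotropic (being a subform of the anisotropic $\pi$), and $D(\sigma) \subseteq D(\pi) \subseteq G(q)$, so by induction $q$ is already divisible by $\sigma$, say $q \simeq \sigma \otimes \rho$ for some anisotropic quasilinear form $\rho$ over $F$ (anisotropic because $q$ is).

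The heart of the argument is then to upgrade divisibility by $\sigma$ to divisibility by $\pi$ using the extra similarity factor $a_n$. The key point is that $-a_n \in D(\pi) \subseteq G(q)$, so $a_n D(q) = D(q)$ by Lemma \ref{lemsimilarity} (note $a_n \neq 0$); moreover $a_n \in N(\sigma\otimes\rho)$-type considerations are not even needed. First I would show $a_n \in G(\rho)$, or rather reduce to a statement about $\rho$: since $D(q) = D(\sigma \otimes \rho)$ is the $F^2$-span of $\{ c d : c \in D(\sigma),\ d \in D(\rho)\}$ and $D(\sigma)$ is already a field (roundness of $\sigma$, \S\ref{secqpforms}) contained in $G(q)$, one sees that $D(q)$ is a $D(\sigma)$-vector space and $a_n \cdot D(q) = D(q)$. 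Now pick a basis $d_1, \hdots, d_k$ of $D(\rho)$ as an $F^2$-vector space with each $d_i \in D(\rho)$, so that $\rho \simeq \form{d_1, \hdots, d_k}$ up to isometry (using anisotropy of $\rho$ and Proposition \ref{propsubforms}); the goal becomes to show $\rho$ is divisible by $\form{1,-a_n}$, i.e. that $a_n D(\rho) \subseteq D(q) = D(\sigma)\cdot D(\rho)$ in a controlled way — more precisely, I would aim to show $a_n \in G(q)$ forces $q \simeq \form{1,-a_n}\otimes (\sigma \otimes \rho')$ for a suitable subform $\rho'$. The cleanest route is probably: since $a_n \in G(q)$ and $D(q)$ is a $D(\sigma)$-subspace of $F$, the scalar $a_n$ acts $D(\sigma)$-linearly on $D(q)$, and $D(q) = D(\sigma)\langle d_1, \hdots, d_k\rangle$; choosing the $d_i$ so that $a_n d_1, \hdots, a_n d_j$ together with $d_1, \hdots, d_j$ span, one extracts a subform $\rho'$ with $q \simeq \sigma \otimes \form{1,-a_n} \otimes \rho' = \pi \otimes \rho'$.

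The main obstacle I anticipate is the bookkeeping in this last extraction step: making the "basis-splitting" argument precise so that the decomposition $q \simeq \pi \otimes \rho'$ is forced on the nose (as an isometry, not merely an equality of value sets). The honest way around this is to lean on Proposition \ref{propsubforms} throughout — every isometry of anisotropic quasilinear forms is detected by equality of value sets — together with Lemma \ref{lemanisdivisibility} to pass freely between $q$ and $\anispart q$ (here $q = \anispart q$ already, but in the inductive bookkeeping this flexibility helps). Concretely, I would argue that $D(q) + a_n D(q) = D(q)$, hence $D(\pi \otimes \rho') = D(q)$ where $\rho'$ is built as above, and then invoke Proposition \ref{propsubforms} to conclude $\pi \otimes \rho' \simeq q$; a short dimension count (both sides have the same dimension by construction) finishes it. I expect no essential difficulty beyond this combinatorial linear algebra over the field $D(\sigma)$, since all the hard structural input — roundness of quasi-Pfister forms, the value-set characterization of isometry, and Lemma \ref{lemsimilarity} — is already available.
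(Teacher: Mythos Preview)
The paper does not supply its own proof of this proposition; it is quoted as a preliminary result from \cite[Prop.~6.4]{Hoffmann2} and \cite[Cor.~2.20]{Scully3}, so there is nothing in the text to compare your argument against directly.

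That said, your proposal is correct in substance, but the induction on $n$ is unnecessary scaffolding. The whole point is this: since $\pi$ is an anisotropic quasi-Pfister form, $D(\pi)$ is a subfield of $F$ containing $F^2$ (\S\ref{secqpforms}); the hypothesis $D(\pi)\subseteq G(q)$ together with Lemma~\ref{lemsimilarity} says that $D(q)$ is stable under multiplication by every element of $D(\pi)$, i.e.\ $D(q)$ is a $D(\pi)$-vector subspace of $F$. Choosing a $D(\pi)$-basis $d_1,\ldots,d_m$ of $D(q)$ and setting $\rho=\form{d_1,\ldots,d_m}$, one has $D(\pi\otimes\rho)=D(q)$ and $\dim(\pi\otimes\rho)=[D(\pi):F^2]\cdot m=\dim_{F^2}D(q)=\dim(q)$, so $\pi\otimes\rho$ is anisotropic and Proposition~\ref{propsubforms} yields $q\simeq\pi\otimes\rho$ in one stroke. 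Your inductive step is doing exactly this one Pfister slot at a time, which works but obscures the picture. The ``basis-splitting'' step you worry about is just the passage from a $D(\sigma)$-basis to a $D(\sigma)(a_n)$-basis of $D(q)$; it goes through because $a_n\notin D(\sigma)$ (otherwise $\pi\simeq\form{1,a_n}\otimes\sigma$ would be isotropic), whence $[D(\sigma)(a_n):D(\sigma)]=2$, and there is no genuine obstacle there.
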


This has the following consequence:

\begin{corollary} \label{cordivbynormform} Let $q$ and $p$ be anisotropic quasilinear quadratic forms over $F$. Then $q$ is divisible by $\normform{p}$ if and only if $\anispart{(q \otimes p)}$ is similar to $q$.
\begin{proof} Let $a_0,a_1,\hdots,a_n \in F^*$ be such that $p \simeq \form{a_0,a_1,\hdots,a_n}$. By Proposition \ref{propdivisibility} and the definition of $\normform{p}$, $q$ is divisible by $\normform{p}$ if and only if $N(p) \subseteq G(q)$. But $N(p) = F^2\left(\frac{a_1}{a_0},\hdots,\frac{a_n}{a_0}\right)$, and, since $G(q)$ is a subfield of $F$ containing $F^2$ (see \S \ref{secsimilarity}), we see that $q$ is divisible by $\normform{p}$ if and only if $a_ia_0^{-1} \in G(q)$ for all $1 \leq i \leq n$. By Lemma \ref{lemsimilarity}, this is equivalent to the assertion that $a_ia_0^{-1}D(q) = D(q)$ for all $1 \leq i \leq n$. Since $D(q)$ is a finite-dimensional $F^2$-linear subspace of $F$, these equalities hold if and only if
$$ \sum_{i=0}^n a_ia_0^{-1}D(q) \subseteq D(q).$$
But $\sum_{i=0}^n a_ia_0^{-1}D(q)=D(a_0^{-1}q \otimes p)$, so by Proposition \ref{propsubforms}, the preceding inclusion amounts to the assertion that $\anispart{(q \otimes p)} \subset a_0q$. On the other hand, Proposition \ref{propsubforms} also shows that $a_0q$ is a subform of $\anispart{(q \otimes p)}$. Thus, we see that $q$ is divisible by $\normform{p}$ if and only if $a_0q \simeq \anispart{q \otimes p}$, which proves the desired assertion. \end{proof} \end{corollary}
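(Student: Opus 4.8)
The plan is to reduce everything to statements about represented values, using the quasilinear-specific machinery already in place --- principally Proposition \ref{propdivisibility}, Lemma \ref{lemsimilarity}, and Proposition \ref{propsubforms}. First I would fix a diagonalization $p \simeq \form{a_0,a_1,\dots,a_n}$ with all $a_i \in F^*$, so that $q \otimes p \simeq a_0q \perp a_1q \perp \cdots \perp a_nq$ and hence $D(q \otimes p) = \sum_{i=0}^n a_iD(q)$. Since $N(p)$ is by definition the smallest subfield of $F$ containing all ratios of non-zero values of $p$, one checks immediately that $N(p) = F^2\!\left(a_1a_0^{-1},\dots,a_na_0^{-1}\right)$; as $G(q)$ is a subfield of $F$ containing $F^2$ (see \S\ref{secsimilarity}), the inclusion $N(p) \subseteq G(q)$ is equivalent to $a_ia_0^{-1} \in G(q)$ for every $1 \le i \le n$. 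By Proposition \ref{propdivisibility} (applied to $\pi = \normform{p}$, whose set of represented values is $N(p)$ by definition of the norm form), this is precisely the condition that $q$ be divisible by $\normform{p}$.

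Next I would translate $a_ia_0^{-1} \in G(q)$ into a statement about $D(q)$ via Lemma \ref{lemsimilarity}: it says $a_ia_0^{-1}D(q) = D(q)$. Since $D(q)$ is a finite-dimensional $F^2$-subspace of $F$ and $a_0a_0^{-1}D(q) = D(q)$ trivially, these equalities hold simultaneously if and only if the single inclusion $\sum_{i=0}^n a_ia_0^{-1}D(q) \subseteq D(q)$ holds (a containment in one direction of finite-dimensional $F^2$-spaces of equal dimension forces equality, and then each individual equality follows). Multiplying through by $a_0$, this reads $D(q \otimes p) \subseteq a_0D(q) = D(a_0q)$, which by Proposition \ref{propsubforms} is equivalent to $\anispart{(q \otimes p)} \subset a_0q$.

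It remains to observe that $a_0q$ is \emph{always} a subform of $\anispart{(q \otimes p)}$: indeed $a_0q$ is an orthogonal summand of $q \otimes p$ and is anisotropic, so $D(a_0q) \subseteq D(q \otimes p) = D\big(\anispart{(q \otimes p)}\big)$ and Proposition \ref{propsubforms} applies. Combining this with the previous paragraph, $q$ is divisible by $\normform{p}$ if and only if $\anispart{(q \otimes p)}$ and $a_0q$ are each subforms of the other; comparing dimensions and invoking Proposition \ref{propsubforms} once more, this happens if and only if $a_0q \simeq \anispart{(q \otimes p)}$, which --- since $a_0q$ is similar to $q$ --- is exactly the assertion that $\anispart{(q \otimes p)}$ is similar to $q$. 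I do not anticipate a genuine obstacle here; the only points requiring mild care are the bookkeeping with the scalar $a_0$ and the (standard, for quasilinear forms) fact that passage to the anisotropic part commutes with scaling, so that $\anispart{(a_0^{-1}q \otimes p)} = a_0^{-1}\anispart{(q \otimes p)}$ and the displayed inclusions may be freely rescaled.
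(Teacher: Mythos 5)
Your argument is correct and matches the paper's proof step for step: the same diagonalization, the same reduction via Proposition \ref{propdivisibility}, Lemma \ref{lemsimilarity}, and Proposition \ref{propsubforms}, the same rewriting of the condition as $\sum_{i=0}^n a_ia_0^{-1}D(q) \subseteq D(q)$, and the same observation that $a_0q$ is always a subform of $\anispart{(q \otimes p)}$. Nothing to add.
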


\subsection{Isotropy of quasilinear quadratic forms under field extensions} \label{secquasilinearisotropy} We record here some basic observations regarding the isotropy behaviour of quasilinear quadratic forms under field extensions. The first concerns \emph{separable} extensions (here, when we say that $L$ is a separable extension of $F$, we mean that for any algebraic closure $\overline{F}$ of $F$, the ring $L \otimes_F \overline{F}$ has no nilpotent elements):

\begin{lemma}[{cf. \cite[Prop. 5.3]{Hoffmann2}}] \label{lemseparable} Let $q$ be an anisotropic quasilinear quadratic form over $F$. If $L$ is a separable field extension of $F$, then $q_L$ is anisotropic.
\end{lemma}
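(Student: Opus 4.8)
The plan is to reduce the general case to the two elementary building blocks out of which every separable extension is assembled: purely transcendental extensions and finite separable extensions. Recall from Section~\ref{secbasic} that $\witti{0}{q}$ (hence in particular anisotropy) is unaffected by purely transcendental base change, so the first building block is immediate. For the second, the key point is that a finite separable extension $L/F$, in characteristic $2$, is generated by a single element whose minimal polynomial is separable; since $q$ is quasilinear, isotropy of $q_L$ is a statement about $F^2$-linear dependence among the coefficients of $q$ in the field $L$, and separability guarantees that no new $F^2$-linear relations among elements of $F$ can appear in $L$. Concretely, because $L/F$ is separable, $L$ is linearly disjoint from the purely inseparable closure $F^{1/2^\infty}$ of $F$; equivalently, $L \otimes_F F^{1/2}$ is a field (reduced, being separable over $F^{1/2}$, and of the right dimension), so $\{1\}$ — or rather an $F$-basis of $F$ over $F^2$ restricted appropriately — remains $L^2$-linearly independent. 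Thus an $F^2$-linear relation $\sum a_i \lambda_i^2 = 0$ with $\lambda_i \in L$ forces all $\lambda_i = 0$, which is exactly anisotropy of $q_L$.

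The concrete steps I would carry out are: (1) Write an arbitrary separable extension $L/F$ as a filtered union of finitely generated separable subextensions, and observe that anisotropy is detected on finitely generated subextensions (a nonzero isotropic vector has finitely many coordinates, each lying in such a subextension); so we may assume $L/F$ is finitely generated. (2) Factor such an $L/F$ as $F \subseteq F(t_1,\dots,t_r) \subseteq L$ with the first step purely transcendental and $L$ finite separable over the intermediate field; by Lemma behaviour of $\witti{0}{-}$ under purely transcendental extensions it suffices to treat $L/F$ finite separable. (3) For $L/F$ finite separable, use the primitive element theorem to write $L = F(\theta)$ with separable minimal polynomial, and argue that the $F$-algebra $L \otimes_F K$ is reduced for any extension $K/F$ — in particular for $K = F^{1/2}$ — which is precisely the hypothesis that $L/F$ is separable. (4) Conclude that if $\{e_j\}$ is any $F^2$-basis of the (finite-dimensional) space $D(q)_{F^2}$... more simply: if $q \simeq \form{a_1,\dots,a_n}$ and $\sum a_i \lambda_i^2 = 0$ in $L$ with not all $\lambda_i$ zero, then since $L \otimes_F F^{1/2}$ is a field we get a nontrivial relation $\sum a_i^{1/2} \mu_i = 0$ over $F^{1/2}$ with $\mu_i = \lambda_i \otimes 1$, contradicting the anisotropy of $q$ after the (purely inseparable, hence harmless for this purpose) base change to $F^{1/2}$, because anisotropy of $q$ over $F$ is equivalent to the $a_i$ being $F^2$-linearly independent, equivalently the $a_i^{1/2}$ being $F$-linearly independent in $F^{1/2}$, equivalently remaining $(F^{1/2})$-linearly independent after the field base change $F^{1/2} \to L \otimes_F F^{1/2}$.

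The main obstacle is step (3)–(4): making precise the passage between ``$q_L$ isotropic'' and an $F^{1/2}$-linear dependence, and checking that separability of $L/F$ is exactly what prevents the base change from introducing such a dependence. The cleanest formulation is: anisotropy of a quasilinear $q \simeq \form{a_1,\dots,a_n}$ over $F$ is equivalent to $F$-linear independence of $a_1^{1/2},\dots,a_n^{1/2}$ inside $F^{1/2}$; tensoring the inclusion $F \hookrightarrow F^{1/2}$ with the flat (indeed free) module $L$ over $F$, linear independence is preserved, so the $a_i^{1/2} \otimes 1$ are $L$-linearly independent in $L \otimes_F F^{1/2}$; and since $L/F$ separable makes $L \otimes_F F^{1/2}$ a field containing $L$ (here one uses that $L^2 \otimes_{F^2} F$ maps isomorphically onto $L$, a standard reformulation of separability in characteristic $p$), their images in this field remain $L$-linearly independent, which unwinds to anisotropy of $q_L$. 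Once this linear-algebra reformulation is in place the argument is routine; everything else is bookkeeping about reducing to the finitely generated, then finite separable, case.
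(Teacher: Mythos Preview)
The paper does not actually prove this lemma; it merely records it with a reference to Hoffmann's article. So there is no ``paper's proof'' to compare against beyond noting that the cited source gives the argument.

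Your proposal is correct, and the core idea you eventually arrive at---that anisotropy of a quasilinear form $q \simeq \form{a_1,\dots,a_n}$ amounts to $F^2$-linear independence of the $a_i$, and that MacLane's separability criterion (injectivity of the multiplication map $L^2 \otimes_{F^2} F \to L$) then forces $L^2$-linear independence of the $a_i$ in $L$---is exactly the right one. Two remarks, however. First, the reduction steps (1)--(3) are unnecessary: MacLane's criterion holds for \emph{any} separable extension $L/F$, not only finite or finitely generated ones, so once you have the linear-algebra reformulation you are done in one line without ever factoring $L/F$ or invoking the primitive element theorem. Second, in your final parenthetical you claim that $L^2 \otimes_{F^2} F \to L$ is an \emph{isomorphism}; you only need (and only have, in general) injectivity---surjectivity can fail for infinite extensions, but since you reduced to the finite case this does not affect correctness. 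Stripped of the scaffolding, your argument is the standard short proof.
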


In particular, anisotropic forms remain anisotropic under purely transcendental field extensions. Secondly, we will need a slightly more subtle result regarding function fields of (affine) hypersurfaces. Let $T = (T_1,\hdots,T_m)$ be a system of algebraically independent variables over $F$, let $g \in F[T]$ be an \emph{irreducible} polynomial and let $F[g]$ denote the fraction field of the integral domain $F[T]/(g)$ (i.e., the function field of the integral affine hypersurface $\lbrace g = 0 \rbrace \subset \mathbb{A}_F^m$). Given an element $f \in F[T]$, let us write $\mathrm{mult}_g(f)$ for the largest non-negative integer $s$ such that $f = g^sh$ for some $h \in F[T]$ (with the added convention that $\mathrm{mult}_g(0) = +\infty$). Then we have the following:

\begin{lemma}[{cf. \cite[Prop. 2.33]{Scully3}}] \label{lemmultiplicity} In the above situation, let $q$ be a quasilinear quadratic form and let $f \in F[T]$. Suppose that $f \in D(q_{F(T)})$ and that $q_{F[g]}$ is anisotropic. Then $\mathrm{mult}_g(f) \equiv 0 \pmod{2}$. \end{lemma}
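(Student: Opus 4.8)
The statement to prove is Lemma~\ref{lemmultiplicity}: if $q$ is a quasilinear quadratic form over $F$, $g \in F[T]$ is irreducible, $f \in D(q_{F(T)})$, and $q_{F[g]}$ is anisotropic, then $\mathrm{mult}_g(f) \equiv 0 \pmod 2$.

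\textbf{The plan.} The plan is to argue by contradiction: suppose $s := \mathrm{mult}_g(f)$ is odd (in particular $f \neq 0$, so $f \in D(q_{F(T)})^* $, and $s \geq 1$). Write $f = g^s h$ with $h \in F[T]$ coprime to $g$, so that $\mathrm{mult}_g(h) = 0$. Since $q$ is quasilinear, $D(q_{F(T)})$ is an $F(T)^2$-linear subspace of $F(T)$, and being a square is ``invisible'' to $D$ in the sense that $g^{2k} \cdot D(q_{F(T)}) \subseteq D(q_{F(T)})$. The first step is therefore to reduce, using the oddness of $s$, to the case $s = 1$: from $f = g^{s} h \in D(q_{F(T)})$ and $s - 1$ even, I get $g h = g^{-(s-1)} f = (g^{-(s-1)/2})^2 f \in D(q_{F(T)})$ after clearing: more carefully, $f/g^{s-1} = gh$ and $f/g^{s-1} = (1/g^{(s-1)/2})^2 f$, and since $D(q_{F(T)})$ is closed under multiplication by squares of elements of $F(T)^*$, we conclude $gh \in D(q_{F(T)})$ with $g \nmid h$.

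\textbf{Main step.} The heart of the matter is then: if $gh \in D(q_{F(T)})$ with $g \nmid h$ and $g$ irreducible, then $q_{F[g]}$ is isotropic, contradicting the hypothesis. To see this, work in the localization $R := F[T]_{(g)}$, a discrete valuation ring with uniformizer $g$, residue field a subfield of $F[g]$ (in fact $\mathrm{Frac}(R/gR) = F[g]$), and fraction field $F(T)$. The element $gh \in D(q_{F(T)})$ means there is a vector $w \in V_q \otimes_F F(T)$ with $q(w) = gh$. After scaling $w$ by an appropriate power of $g$, I may assume $w \in V_q \otimes_F R$ and that $w$ is \emph{primitive}, i.e.\ its reduction $\bar w \in V_q \otimes_F (R/gR)$ is nonzero --- here one picks $w$ with minimal $g$-adic valuation among all representatives, using that $q$ is quasilinear so that $q(g^{-1} w) = g^{-2} q(w)$ shifts the value's valuation by exactly $2$, a parity argument that pins down what's achievable. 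Since $gh$ has odd $g$-adic valuation $1$ while $q(g^j w_0)$ has valuation $2j + v_g(q(w_0))$, the valuation of $q(w_0)$ is forced to be $\geq 1$ (odd, hence $\geq 1$) once $w_0$ is primitive; reducing mod $g$ gives $q(\bar w) = 0$ with $\bar w \neq 0$ over $R/gR$, hence $q$ is isotropic over $F[g] = \mathrm{Frac}(R/gR)$, using (as before) that isotropy of a quasilinear form is detected over any field extension and that $R/gR \hookrightarrow F[g]$.

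\textbf{The obstacle and cross-reference.} The step I expect to be the main obstacle is the valuation-parity bookkeeping: making precise that one can choose a primitive representative $w_0$ and that the quasilinearity of $q$ forces $v_g\big(q(g^j w_0)\big) = 2j + v_g\big(q(w_0)\big)$, so that matching the odd valuation $1$ of $gh$ requires $v_g(q(w_0)) \geq 1$. This is where diagonalizability is essential: writing $q \simeq \form{a_1,\dots,a_n}$ with $a_i \in F^*$ and $w_0 = (h_1,\dots,h_n)$ with $h_i \in R$ not all divisible by $g$, we have $q(w_0) = \sum a_i h_i^2$, and each $a_i h_i^2$ has \emph{even} $g$-adic valuation (as $a_i \in F^*$ is a $g$-adic unit in $R$), so there can be no cancellation of the lowest-valuation terms modulo the fact that $v_g(a_ih_i^2)$ are all even --- hence $v_g(q(w_0))$ is even; combined with it being $\geq 1$ from the reduction, and with needing $2j + v_g(q(w_0)) = 1$ to be impossible unless we instead land at $q(\bar w_0) = 0$... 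Actually the cleanest route is: among all $w$ with $q(w) \in F(T)^{*2} \cdot gh$, pick one in $V_q \otimes R$ of minimal valuation content; primitivity of $w_0$ then gives $v_g(q(w_0))$ even, while $q(w_0) \in F(T)^{*2}\cdot gh$ has odd valuation $1 + 2\mathbb{Z}$ --- a contradiction --- \emph{unless} $q(w_0) = 0$, i.e.\ $q_{F(T)}$, hence $q_{F[g]}$, is isotropic. Since the statement is cited from \cite[Prop.~2.33]{Scully3}, I would also simply invoke that reference, but the self-contained argument above via the DVR $F[T]_{(g)}$ and the parity of $g$-adic valuations of values of a diagonalizable form is the proof I would write out.
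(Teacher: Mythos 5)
Your core argument via the discrete valuation ring $R = F[T]_{(g)}$ is correct and complete: reduce to $\mathrm{mult}_g(f) = 1$ using closure of $D(q_{F(T)})$ under multiplication by squares, rescale a representing vector to a \emph{primitive} $w_0 \in V_q \otimes_F R$, observe that $v_g\bigl(q(w_0)\bigr)$ is odd (by homogeneity) and $\geq 0$ (since $q(w_0) \in R$), hence $\geq 1$; reducing mod $g$ then yields a nonzero isotropic vector for $q$ over $R/gR = F[g]$, contradicting the hypothesis. The paper itself does not prove this lemma --- it cites \cite[Prop.\ 2.33]{Scully3} --- but the surrounding discussion indicates the route taken there is through the multi-variable Cassels--Pfister representation theorem (Theorem~\ref{thmcprep}): first pass to a representative with \emph{polynomial} coordinates in $F[T]$, then reduce modulo $g$ directly. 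Your localization argument bypasses Cassels--Pfister entirely (clearing denominators in the DVR suffices), so it is a genuinely different and in a sense more elementary route.

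However, the discussion beginning ``This is where diagonalizability is essential'' and the subsequent ``cleanest route'' paragraph contain an incorrect claim. It is \emph{not} true that primitivity of $w_0$ alone forces $v_g\bigl(q(w_0)\bigr)$ to be even. The individual summands $a_i h_i^2$ do all have even $g$-adic valuation, but when several attain the minimum, their leading coefficients can cancel in $F[g]$, raising $v_g$ of the sum by an odd amount. Concretely, with $m=1$, $g = T_1^2 - a$ for $a \in F \setminus F^2$, $q = \form{1,a}$ and $w_0 = (T_1, 1)$, one has $q(w_0) = T_1^2 + aT_1 \cdot 0 + a \cdot 1 = g$ in characteristic $2$ and $v_g = 1$, odd --- of course $q_{F[g]}$ is then isotropic, so this is not a counterexample to the lemma, but it shows that the evenness of $v_g\bigl(q(w_0)\bigr)$ is a \emph{consequence} of the anisotropy hypothesis, not of primitivity. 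The correct logic is the one you gave in your first pass (odd and $\geq 0$, hence $\geq 1$, hence reduction is isotropic), so I would delete the detour about cancellation. Relatedly, you attribute more to quasilinearity than the argument actually uses: the identity $q(g^{-1}w) = g^{-2}q(w)$ and the containment $q(w_0) \in R$ for $w_0 \in V_q \otimes_F R$ are degree-$2$ homogeneity facts valid for any quadratic form, not special features of quasilinear ones. As written, your DVR argument does not in fact invoke additivity of $q$ anywhere.
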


For $m=1$, the assertion of Lemma \ref{lemmultiplicity} holds for arbitrary quadratic forms in arbitrary characteristic. Indeed, this may be deduced as an easy consequence of the fundamental ``representation theorem'' of Cassels and Pfister (\cite[Thm. 17.3]{EKM}). For $m>1$, however, the statement is peculiar to the quasilinear case; as observed by Hoffmann (\cite[Cor. 3.4]{Hoffmann2}), one may use here the additivity of quasilinear quadratic forms to generalize the Cassels-Pfister theorem to the following stronger multi-variable statement:

\begin{theorem}[{Hoffmann, cf. \cite[Cor. 3.4]{Hoffmann2}}] \label{thmcprep} Let $q$ be a quasilinear quadratic form over $F$, let $T = (T_1,\hdots,T_n)$ be a system of algebraically independent variables over $F$ and let $f \in F[T]$. If $f \in D(q_{F(T)})$, then $f \in D(q_{F[T]})$. \end{theorem}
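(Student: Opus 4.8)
The plan is to prove Theorem~\ref{thmcprep} by induction on the number $n$ of variables, with the classical single-variable Cassels--Pfister representation theorem (\cite[Thm. 17.3]{EKM}) serving both as the base case and as the workhorse of the inductive step; the characteristic-$2$ hypothesis enters at exactly one (crucial but elementary) point. For $n = 1$ the statement is precisely \cite[Thm. 17.3]{EKM} applied to $q$ over $F$, so suppose $n \geq 2$ and that the result holds for systems of $n - 1$ algebraically independent variables over every field of characteristic $2$. Fix a diagonalization $q \simeq \form{a_1,\hdots,a_m}$ with all $a_i \in F$, and set $E \coloneqq F(T_1,\hdots,T_{n-1})$, so that $F(T) = E(T_n)$.

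Viewing $f$ as a polynomial in $T_n$ over $E$ that is represented by $q$ over the field $E(T_n) = F(T)$, the single-variable theorem applied over $E$ furnishes $g_1,\hdots,g_m \in E[T_n]$ with $f = \sum_{i=1}^m a_i g_i^2$. Write $g_i = \sum_k c_{i,k} T_n^k$ with $c_{i,k} \in E$. Since $\mathrm{char}(F) = 2$, squaring is additive, so $g_i^2 = \sum_k c_{i,k}^2 T_n^{2k}$ and hence $f = \sum_k \big( \sum_i a_i c_{i,k}^2 \big) T_n^{2k}$ is a polynomial in $T_n^2$. Comparing this with the expansion $f = \sum_j f_j T_n^j$ (whose coefficients $f_j$ lie in $F[T_1,\hdots,T_{n-1}]$, as $f \in F[T]$), we find that $f_j = 0$ for odd $j$, while for each $k$ the coefficient $f_{2k} \in F[T_1,\hdots,T_{n-1}]$ equals $\sum_i a_i c_{i,k}^2$ and is in particular represented by $q$ over $F(T_1,\hdots,T_{n-1})$. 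By the induction hypothesis there exist $d_{i,k} \in F[T_1,\hdots,T_{n-1}]$ with $f_{2k} = \sum_i a_i d_{i,k}^2$; putting $e_i \coloneqq \sum_k d_{i,k} T_n^k \in F[T]$ and invoking additivity of squaring once more, we obtain $\sum_i a_i e_i^2 = \sum_k f_{2k} T_n^{2k} = f$, so that $f \in D(q_{F[T]})$, as required.

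The whole force of the argument --- and the only place where the quasilinearity of $q$ (equivalently, the characteristic-$2$ hypothesis) is genuinely needed --- is the observation that the square of a polynomial in $T_n$ over a field of characteristic $2$ is again a polynomial in $T_n^2$. This is exactly what allows a representation of $f$ by $q$ over $F(T)$ to be taken apart, coefficient by coefficient in the last variable, into representations over the polynomial ring in the remaining $n-1$ variables, to which the induction hypothesis then applies. Apart from this, the proof consists only of the bookkeeping above, and I do not foresee any real obstacle; in particular, no control over denominators or clearing of common factors is needed, since squaring already suppresses all the odd-degree terms in $T_n$. (Replacing squares by $p$-th powers throughout yields verbatim the analogue of Theorem~\ref{thmcprep} for quasilinear $p$-forms in characteristic $p$, as mentioned at the end of the introduction.)
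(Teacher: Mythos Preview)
Your proof is correct; the paper does not actually prove Theorem~\ref{thmcprep} but simply cites Hoffmann, noting only that one uses ``the additivity of quasilinear quadratic forms to generalize the Cassels--Pfister theorem''. Your induction on $n$ --- applying the one-variable Cassels--Pfister theorem over $F(T_1,\hdots,T_{n-1})$ and then using the Frobenius to decouple the $T_n^{2k}$-coefficients --- is precisely the argument this remark encapsulates.
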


\section{Main results: The quasilinear case}

In this final section we prove several new results concerning the isotropy behaviour of quasilinear quadratic forms over function fields of (totally singular) quadrics. The main result is Theorem \ref{thmsubformdec}, from which a number of interesting statements, including the quasilinear part of Theorem \ref{thmmain}, follow formally. This theorem is essentially a generalization of \cite[Thm. 5.1]{Scully3}, and its proof follows that of the latter closely. However, certain adjustments are needed along the way to facilitate the extra generality. Among them, we need to work with certain ``generic subforms'' of a given anisotropic form. We therefore begin with a brief discussion around this idea. As in the previous section, $F$ will denote an arbitrary field of characteristic $2$ throughout.

\subsection{Generic subforms} The following lemma is well known, though we do not have a precise reference for it:

\begin{lemma} \label{lemspecofrationalpoints}  Let $k$ be a field and let $f \colon X \rightarrow Y$ be a proper morphism of $k$-schemes with $Y$ irreducible. If the generic fibre of $f$ has a rational point, then every fibre of $f$ over the regular locus of $Y$ has a rational point.

\begin{remark} Here we are regarding the fibre over a point $y \in Y$ as a scheme over the residue field $k(y)$. In particular, a rational point of $f^{-1}(y)$ means a $k(y)$-point of $f^{-1}(y)$. \end{remark}

\begin{proof} Assume that the generic fibre of $f$ has a rational point, and let $y$ be a regular point of $Y$ with reduced closure $Z$ in $Y$. If $Z$ has codimension $1$ in $Y$, then $\mathcal{O}_{Y,y}$ is a discrete valuation ring, and the statement follows from \cite[(7.3.8)]{EGAII} Otherwise, the regularity of $\mathcal{O}_{Y,y}$ implies that $y$ is a regular point of some codimension-1 integral closed subvariety of $Y$. The statement therefore follows by induction on the codimension of $Z$ in $Y$. \end{proof} \end{lemma}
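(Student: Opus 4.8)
The plan is to reduce the general statement to the codimension-one case and then quote the local result for discrete valuation rings found in \cite{EGAII}. Concretely, suppose the generic fibre of $f \colon X \rightarrow Y$ has a rational point, and fix a point $y$ lying in the regular locus of $Y$. Let $Z$ be the reduced closure of $y$ in $Y$; then $\mathcal{O}_{Y,y}$ is a regular local ring of dimension $d = \mathrm{codim}(Z,Y)$, and our goal is to produce a $k(y)$-point of $f^{-1}(y)$. I would argue by induction on $d$.

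If $d = 0$, then $y$ is the generic point of an irreducible component of $Y$; since $Y$ is irreducible, $y$ is the generic point of $Y$ itself, and the fibre over $y$ is exactly the generic fibre of $f$, which has a rational point by hypothesis. If $d = 1$, then $\mathcal{O}_{Y,y}$ is a one-dimensional regular local ring, hence a discrete valuation ring. Base-changing $f$ along $\mathrm{Spec}(\mathcal{O}_{Y,y}) \rightarrow Y$ gives a proper morphism over a DVR whose generic fibre has a rational point; the valuative criterion of properness (in the precise form recorded in \cite[(7.3.8)]{EGAII}) then extends this point to a section over $\mathrm{Spec}(\mathcal{O}_{Y,y})$, and evaluating at the closed point produces the desired $k(y)$-point of $f^{-1}(y)$.

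For the inductive step $d \geq 2$, I would use the fact that a regular local ring of dimension $d$ contains a regular local ring of dimension $1$ with the same fraction field behaviour in the geometric sense: more precisely, since $\mathcal{O}_{Y,y}$ is regular of dimension $d \geq 2$, its maximal ideal admits an element $t$ that is part of a regular system of parameters, and the quotient $\mathcal{O}_{Y,y}/(t)$ is again regular of dimension $d-1$. This element $t$ cuts out, in a neighbourhood of $y$, an integral closed subscheme $Y'$ of $Y$ of codimension $1$, and $y$ lies in the regular locus of $Y'$ with $\mathrm{codim}(Z, Y') = d - 1$. Applying the induction hypothesis to the base change $f \times_Y Y' \colon X \times_Y Y' \rightarrow Y'$ — whose generic fibre (the fibre over the generic point $\eta'$ of $Y'$) still has a rational point by the $d=1$ case applied to $f$ over the DVR $\mathcal{O}_{Y,\eta'}$ — yields a $k(y)$-point of $(X \times_Y Y')_y = f^{-1}(y)$, as required.

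The main subtlety to get right is the inductive step: one must ensure that $y$ genuinely lies in the regular locus of the chosen codimension-one subvariety $Y'$ and that the generic fibre of $f$ restricted to $Y'$ still carries a rational point. The first point is handled by choosing $t$ within a regular system of parameters so that the local ring of $Y'$ at $y$ is the regular ring $\mathcal{O}_{Y,y}/(t)$; the second follows by first applying the codimension-one case to pass a rational point from the generic fibre of $f$ down to the fibre over the generic point of $Y'$. Everything else is a routine unwinding of base change and the valuative criterion, so I would keep the written proof as terse as the excerpt already has it, simply citing \cite[(7.3.8)]{EGAII} for the DVR case and invoking induction on codimension for the rest.
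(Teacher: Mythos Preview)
Your proposal is correct and follows essentially the same approach as the paper: induction on the codimension of $\overline{\{y\}}$ in $Y$, with the DVR case handled by \cite[(7.3.8)]{EGAII} and the inductive step carried out by passing to a codimension-$1$ integral closed subvariety through $y$ on which $y$ remains regular. Your write-up is more explicit than the paper's---in particular, you spell out that the generic fibre of the restricted morphism over $Y'$ acquires a rational point by first applying the codimension-$1$ case to the generic point of $Y'$---but the underlying argument is identical.
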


Using this, we can prove the following existence statement which will be needed for the proof of Theorem \ref{thmsubformdec} below:

\begin{lemma} \label{lemgenericsubforms} Let $p$ be an anisotropic quasilinear quadratic form of dimension $\geq 2$ over $F$ such that $1 \in D(p)$. Then there exist a purely transcendental field extension $L$ of $F$ and a codimension-1 subform $p' \subset p_L$ such that
\begin{enumerate} \item $L[p']$ is a purely transcendental extension of $F(p)$.
\item $p_L \simeq \form{1} \perp p'$. \end{enumerate}

\begin{remark} We remind the reader that $L[p']$ denotes here the function field of the \emph{affine} (as opposed to projective) hypersurface defined by the vanishing of $p'$ over $L$. \end{remark}

\begin{proof} We construct the generic such $p'$ (compare \cite[Lem. 4.2]{Totaro1}) Let $q = \form{1} \perp p$ with underlying vector space $\ell \oplus V_p$, and let $P \subset \mathbb{P}(V_p)$ and $Q \subset \mathbb{P}(\ell \oplus V_p)$ denote the projective $F$-quadrics defined by the vanishing of $p$ and $q$, respectively. Let $Y$ denote the Grassmannian of hyperplanes in $V_p$ and consider the closed subschemes 
$$ I_1 = \lbrace (x,H) \in P \times Y\;|\;x \in \mathbb{P}(H) \rbrace$$
and
$$ I_2 = \lbrace (x,H) \in Q \times Y\;|\;x \in \mathbb{P}(\ell \oplus H) \rbrace$$
of $P \times Y$ and $Q \times Y$, respectively. Then the generic fibre $P'$ of the canonical projection $I_1 \rightarrow Y$ is the projective quadric defined by the vanishing of a codimension-1 subform $p' \subset p_L$, where $L = F(Y)$. Since $Y$ is a rational $F$-variety, $L$ is a purely transcendental extension of $F$. We claim that $p'$ satisfies conditions (1) and (2). For (1), it suffices to show that $L(p')$ is a purely transcendental extension of $F(p)$ ($L[p']$ is a purely transcendental extension of $L(p')$ -- see \ref{secfunctionfields}). Note, however, that $L(p')$ is also canonically $F$-isomorphic to the function field of the generic fibre of the projection $I_1 \rightarrow P$. As the latter fibre is nothing else but the projective space of hyperplanes in $\mathbb{P}(V_p \otimes_F F(p)\big)$ containing the canonical rational point of $P \times_F \mathrm{Spec}\big(F(p)\big)$, (1) follows. For (2), note that since $p$ is a quasilinear quadratic form representing $1$, we have $D(\form{1} \perp p') \subset D(p_L)$. As $p_L$ is anisotropic, Proposition \ref{propsubforms} implies that $\anispart{(\form{1} \perp p')} \subset p_L$, so by dimension reasons it will be enough to check that $\form{1} \perp p'$ is anisotropic. But the vanishing locus of $\form{1} \perp p'$ is nothing else but the generic fibre of the canonical projection from the second incidence scheme $I_2$ to $Y$. Since the latter projection is (i) proper (as $Q$ is projective over $F$) and (ii) has fibres over regular points which do not posses a rational point (because $p$ is anisotropic, there exists a hyperplane of $V_p$ where $p$ does not represent $1$), its generic fibre admits no rational point by Lemma \ref{lemspecofrationalpoints}. In other words, $\form{1} \perp p'$ is anisotropic, as claimed. \end{proof} \end{lemma}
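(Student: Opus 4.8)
The plan is to produce the desired $p'$ as a \emph{generic} codimension-$1$ subform of $p$: pass to the function field $L$ of the variety parametrising hyperplanes of $V_p$, take the generic hyperplane $H \subset V_p \otimes_F L$, and set $p' := (p_L)|_H$. More precisely, I would write $q = \form{1} \perp p$ with underlying space $\ell \oplus V_p$ (here $\ell$ is the adjoined line), let $P$ and $Q$ be the projective $F$-quadrics of $p$ and $q$, and let $Y$ be the Grassmannian of hyperplanes of $V_p$ — a projective space, hence rational over $F$. Then $L := F(Y)$ is purely transcendental over $F$, and $p'$ is by construction a codimension-$1$ subform of $p_L$. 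It then remains to establish (1) and (2). (I take $\mathrm{dim}(p) \geq 3$ throughout; for $\mathrm{dim}(p) = 2$ the incidence varieties below degenerate and that case should be treated directly.)

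For (1) the key objects are the two incidence schemes
$$ I_1 = \lbrace (x,H) \in P \times Y\;|\;x \in \mathbb{P}(H) \rbrace, \qquad I_2 = \lbrace (x,H) \in Q \times Y\;|\;x \in \mathbb{P}(\ell \oplus H) \rbrace. $$
Since $p$ is not split, $P$ (and likewise $Q$) is integral; and $I_1 \to P$ is a projective-space bundle — its fibre over $x$ being the space of hyperplanes of $V_p$ through the line $x$ — so $I_1$ is an integral $F$-variety. The generic fibre of $I_1 \to Y$ is precisely the projective quadric of $p'$, so $L(p') \cong F(I_1)$; on the other hand, the generic fibre of $I_1 \to P$ is the space of hyperplanes of $V_p \otimes_F F(p)$ passing through the canonical $F(p)$-point of $P$, which is a projective space over $F(p)$. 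Hence $F(I_1)$, equipped with the $F(p)$-structure coming from this second projection, is a purely transcendental extension of $F(p)$. As $L[p']$ is in turn a degree-$1$ purely transcendental extension of $L(p')$ (see \S\ref{secfunctionfields}), property (1) follows.

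For (2), note first that $p_L$ is anisotropic, since $L/F$ is purely transcendental, hence separable, so this is Lemma \ref{lemseparable}. Because $1 \in D(p) \subseteq D(p_L)$ and $D(p') \subseteq D(p_L)$, the additivity of quasilinear forms gives $D(\form{1} \perp p') \subseteq D(p_L)$, whence $\anispart{(\form{1} \perp p')} \subset p_L$ by Proposition \ref{propsubforms}. Since $\mathrm{dim}(\form{1} \perp p') = \mathrm{dim}(p_L)$, it suffices to show that $\form{1} \perp p'$ is anisotropic; and as $p' \subset p_L$ is anisotropic, this reduces to showing that $p'$ does not represent $1$ over $L$. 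Here I would use $I_2$: its projection to $Y$ is proper (as $Q$ is projective over $F$), its generic fibre over $Y$ is the quadric of $\form{1} \perp p'$, and $Y$ is regular, so by Lemma \ref{lemspecofrationalpoints} it is enough to exhibit one $F$-rational hyperplane $H_0 \subset V_p$ for which $\form{1} \perp (p|_{H_0})$ is anisotropic over $F$ — equivalently, for which $p|_{H_0}$ does not represent $1$. Anisotropy of $p$ makes this easy: since $1 \in D(p)$ and $p$ is additive, if $p(v) = p(v_0) = 1$ then $p(v + v_0) = 0$, so $v = v_0$; thus $p$ attains the value $1$ on a unique (nonzero) vector $v_0$, and any $F$-hyperplane avoiding $v_0$ serves as $H_0$.

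The step I expect to require the most care is the function-field bookkeeping in (1): one must verify that $I_1$ really is a projective-space bundle over $P$, so that its generic fibre over $P$ is a projective space (with function field purely transcendental over $F(p)$), and that the $F(p)$-structure on $F(I_1) = L(p')$ arising this way is exactly the one demanded by the statement, i.e.\ that the identifications $L(p') \cong F(I_1)$ and $F(I_1) \cong F(p)(t_1,\dots,t_{n-2})$ are compatible over $F$. By contrast, (2) is comparatively formal once Lemma \ref{lemspecofrationalpoints} and the uniqueness of the vector representing $1$ are in hand.
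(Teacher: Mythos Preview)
Your proposal is correct and follows essentially the same approach as the paper's own proof: the same incidence varieties $I_1$, $I_2$ over the hyperplane Grassmannian $Y$, the same two-projection argument for (1), and the same appeal to Lemma~\ref{lemspecofrationalpoints} for (2). Your explicit justification for the existence of a hyperplane $H_0$ on which $p$ does not represent $1$ (via the uniqueness of the vector $v_0$ with $p(v_0)=1$, using additivity and anisotropy) is a detail the paper leaves to the reader, and your flagging of the $\dim(p)=2$ edge case is a fair observation that the paper does not address explicitly.
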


\begin{remark} \label{remtot} In the situation of Lemma \ref{lemgenericsubforms}, let $\widehat{p} \subset p$ be such that $p \simeq \form{1} \perp \widehat{p}$. If $\witti{1}{p} > 1$, then it has been shown by Totaro (\cite[Thm. 6.4]{Totaro1}) that assertion (1) holds with $L = F$ and $p' = \widehat{p}$, i.e., that $F[\widehat{p}]$ is $F$-isomorphic to $F(p)$ (see also \cite[Cor. 3.9]{Scully3}). We will not use this fact in what follows.\end{remark}

\subsection{A subform condition for isotropy of quasilinear quadratic forms over the function field of a quadric}

The following theorem is the main result of this section; we remark that if $\witti{1}{p}>1$, then the ``up to replacing $F$ with a purely transcendental extension of itself'' qualification may be removed -- see Remark \ref{remquasilinearmain} below:

\begin{theorem} \label{thmsubformdec} Let $p$ and $q$ be anisotropic quasilinear quadratic forms of dimension $\geq 2$ over $F$. If $q_{F(p)}$ is isotropic, then, up to replacing $F$ with a purely transcendental extension of itself, there exists an anisotropic quasilinear quadratic form $\tau$ over $F(p)$ such that
\begin{enumerate} \item $\mathrm{dim}(\tau) = \witti{0}{q_{F(p)}}$, and
\item $\anispart{(p_1 \otimes \tau)} \subset \anispart{(q_{F(p)})}$. \end{enumerate} \end{theorem}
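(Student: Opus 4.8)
The plan is to follow the proof of \cite[Thm.\ 5.1]{Scully3} closely, with the modifications needed to accommodate the fact that here $q$ is an arbitrary anisotropic quasilinear form rather than a divisor of $p_1$ (or whatever the specific hypothesis there was). The natural proof is by induction on $\mathrm{dim}(q)$ (or, perhaps more efficiently, on $\witti{0}{q_{F(p)}}$ together with $\mathrm{lndeg}$-type data). The base case is when $\witti{0}{q_{F(p)}} = 0$, where there is nothing to prove (take $\tau$ to be the empty form, or rather note the statement is vacuous). For the inductive step, the idea is to peel off \emph{one} dimension of isotropy at a time: one shows that $q_{F(p)}$ being isotropic forces a concrete structural constraint on $q$ itself over $F$, use this to produce a codimension-$\leq 1$ subform of $q$ whose behaviour over $F(p)$ is controlled, and then feed this back into the induction hypothesis, finally reassembling $\tau$ from the pieces.

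\textbf{Key steps.} First I would use Lemma \ref{lemgenericsubforms} to replace $F$ by a purely transcendental extension so that $p \simeq \form{1} \perp p'$ with $F(p) \cong L[p']$ a function field of an \emph{affine} quadric — this is exactly the device that lets Theorem \ref{thmcprep} (the multi-variable Cassels--Pfister theorem) and Lemma \ref{lemmultiplicity} be brought to bear. Since isotropy indices, $\mathrm{lndeg}$, and the higher divisibility indices are all insensitive to purely transcendental extensions (Lemma \ref{lemdivinvariance} and \S\ref{secbasic}), nothing of substance is lost, and $\witti{1}{p}$, $p_1$ are unchanged (more precisely $(p_L)_1 \simeq (p_1)_L$). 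Next, the core local-to-global argument: working in $F[p'] = F(p)$, one analyses the isotropy of $q_{F(p)}$ by clearing denominators and studying $g$-adic multiplicities where $g$ is the polynomial defining $p'$. Lemma \ref{lemmultiplicity} says any value of $q$ over the rational function field has even multiplicity along $g$ provided $q$ stays anisotropic; the failure of this — which is what isotropy over $F(p)$ buys us — produces, after a leading-term analysis, a relation of the shape $a_0 \cdot (\text{something in } D(q)) = (\text{coefficients of } p') \cdot (\text{something in } D(q))$, which by Proposition \ref{propsubforms} and Corollary \ref{cordivbynormform} translates into a divisibility/similarity statement tying $q$ to $\normform{p'}$ (equivalently $\normform{p_1}$, using Lemma \ref{lemndegtower}). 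This is where the form $p_1 \otimes \tau$ in conclusion (2) comes from.

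\textbf{Assembling $\tau$.} Having produced from one step of isotropy a quasilinear form $\tau_0$ over $F(p)$ of dimension $1$ (or small) with $\anispart{(p_1 \otimes \tau_0)}$ sitting inside $\anispart{(q_{F(p)})}$, one uses Lemma \ref{lemsubformreduction}-style reasoning (the characteristic-$2$ analogue is easier because the isotropic vectors form a subspace) to pass to a subform $q' \subset q$ with $\mathrm{dim}(q') = \mathrm{dim}(q) - 1$ and $\witti{0}{q'_{F(p)}} = \witti{0}{q_{F(p)}} - 1$, apply the induction hypothesis to $(p, q')$ to get $\tau'$ over $F(p)$ with $\mathrm{dim}(\tau') = \witti{0}{q_{F(p)}} - 1$ and $\anispart{(p_1 \otimes \tau')} \subset \anispart{(q'_{F(p)})} \subset \anispart{(q_{F(p)})}$, and then set $\tau = \tau_0 \perp \tau'$ (passing to the anisotropic part if necessary, after first checking the orthogonal sum stays anisotropic). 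One must verify that $\mathrm{dim}(\tau)$ comes out to exactly $\witti{0}{q_{F(p)}}$ — so the two subform-peeling operations must not "waste" dimensions — and that $\anispart{(p_1 \otimes \tau)} = \anispart{(p_1 \otimes \tau_0) \perp (p_1 \otimes \tau')}$ still embeds in $\anispart{(q_{F(p)})}$; here one uses Proposition \ref{propsubforms} (the value-set criterion) to reduce the embedding claim to an inclusion $D(p_1 \otimes \tau) \subseteq D(q_{F(p)})$, which is additive and hence follows from the two pieces separately.

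\textbf{Main obstacle.} The delicate point — and the part that genuinely goes beyond \cite{Scully3} — is the bookkeeping in the one-dimension peeling step: ensuring that the subform $q' \subset q$ chosen to reduce $\witti{0}{q_{F(p)}}$ by exactly $1$ is \emph{compatible} with the divisibility constraint forced on $q$ by the isotropy (so that $p_1 \otimes \tau_0$ and $p_1 \otimes \tau'$ fit orthogonally inside $q_{F(p)}$ rather than overlapping), and that the generic-subform replacement of $F$ done at the top can be carried out once for the whole induction rather than re-done at each stage (or, if re-done, that the accumulated transcendental extensions still count as a single purely transcendental extension, which is fine). I expect the cleanest route is to prove a sharper statement by induction — tracking not just $\tau$ but the similarity class of $\anispart{(q_{F(p)})}$ relative to $\normform{p_1}$ — so that the divisibility data propagates automatically; this mirrors how \cite[Thm.\ 5.1]{Scully3} is actually set up, and the extra generality of a free $q$ should cost only a more careful invocation of Corollary \ref{cordivbynormform} and Lemma \ref{lemquadraticextensions} at the inductive junction.
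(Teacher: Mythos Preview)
Your inductive scheme has a genuine gap, and it is precisely the one you flag as the ``main obstacle'': there is no mechanism for ensuring that the one-dimensional piece $\tau_0$ and the inductively obtained $\tau'$ are compatible, i.e., that $\tau_0 \perp \tau'$ is anisotropic. Note that $(q'_{F(p)})_{\mathrm{an}}$ and $(q_{F(p)})_{\mathrm{an}}$ have the \emph{same} dimension (namely $\mathrm{dim}(q) - \witti{0}{q_{F(p)}}$) and hence coincide, so the inductive hypothesis applied to $q'$ already gives you a $\tau'$ with $\anispart{(p_1 \otimes \tau')} \subset \anispart{(q_{F(p)})}$; the missing ingredient is an extra element $c \notin D(\tau')$ with $cD(p_1) \subseteq D(q_{F(p)})$, and nothing in your peeling step produces one. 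Your description of where $\tau_0$ itself comes from is also too vague to be a proof --- the relation ``$a_0 \cdot (\text{something in } D(q)) = (\text{coefficients of } p') \cdot (\text{something})$'' does not by itself yield an element $c$ with $c \cdot p_1 \subset \anispart{(q_{F(p)})}$.

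The paper's proof is \emph{not} inductive. After the generic-subform reduction (which you describe correctly), it constructs all of $\tau$ in one stroke. The point is that $L[p] \simeq L(T)(\sqrt{p'(T)})$ is a quadratic extension of $L(T)$, so Lemma~\ref{lemquadraticextensions} applied to $q_{L(T)}$ gives a decomposition
\[
q_{L(T)} \simeq r_{L(T)} \perp p'(T)\langle f_1,\ldots,f_{\mathfrak{i}}\rangle
\]
with $r \subset q$ satisfying $r_{F(p)} \simeq \anispart{(q_{F(p)})}$ and $f_j \in D(r_{L[T]})$. One then sets $\tau = \langle \overline{f_1},\ldots,\overline{f_{\mathfrak{i}}}\rangle$ over $K = L[p']$; its anisotropy is \emph{not} automatic but is forced by choosing the $f_j$ to minimize $\sum_j \mathrm{deg}_{T_1}(f_j)$ (a degree-reduction argument using Lemma~\ref{lemmultiplicity} and Theorem~\ref{thmcprep}). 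The inclusion $\anispart{(p_1 \otimes \tau)} \subset \anispart{(q_K)}$ is then established by showing $b\overline{f_j} \in D(q_K)$ for every $b \in D(p_L)$: for each such $b$ one passes to the \emph{twisted} quadratic extension $M = L(T)(\sqrt{p'(T)/b})$, observes that $\langle b \rangle \perp p' \simeq p_L$ so that $M$ is again purely transcendental over $F(p)$, and uses $D(q_M) = D(r_M)$ together with a careful multiplicity analysis to descend back to $K$. This ``vary $b$ over $D(p)$'' trick is the idea you are missing; it is what links the single quadratic-extension decomposition to the full tensor product $p_1 \otimes \tau$, and it replaces the role you were hoping induction would play.
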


\begin{proof} After multiplying $q$ and $p$ by appropriate scalars if necessary, we may assume that both forms represent $1$. By Lemma \ref{lemgenericsubforms}, we can then find a purely transcendental field extension $L$ of $F$ and a subform $p' \subset p_L$ such that $p_L \simeq \form{1} \perp p'$ and such that the affine function field $L[p']$ is a purely transcendental extension of $F(p)$. We fix such a pair $(L,p')$ for the remainder of the proof. Now, let us choose elements $a_1,\hdots,a_n \in L^*$ such that $p' \simeq \form{a_1,\hdots,a_n}$, and set $p'(T) = \sum_{i=1}^n a_iT_i^2 \in L[T]$, where $T = (T_1,\hdots,T_n)$ is a system of algebraically independent variables over $L$. Then the field $L[p']$ may be identified with the fraction field of the integral domain $L[T]/\big(p'(T)\big)$ (see \S \ref{secfunctionfields} above). Fixing this identification henceforth, we will write $\overline{f}$ for the image of a given polynomial $f \in L[T]$ under the canonical projection $L[T] \rightarrow L[p']$. For such a polynomial $f$, we will also write $m(f)$ for the multiplicity of $p'(T)$ in $f$, i.e., the largest integer $k$ such that $f = p'(T)^kh$ for some $h \in L[T]$. Note that we have $m(f) = 0$ if and only if $\overline{f} \neq 0$ in $L[p']$. \newline

To simplify the notation, we now let $\mathfrak{i} = \witti{0}{q_{F(p)}}$. By hypothesis, we have $\mathfrak{i} > 0$. The proof of Theorem \ref{thmsubformdec} begins with the following lemma:

\begin{lemma} \label{lempresentationlemma} Assume we are in the above situation. Then there exists a subform $r \subset q$ and elements $f_1,\hdots,f_\mathfrak{i} \in D(r_{L[T]})$ such that:
\begin{enumerate} \item $r_{F(p)} \simeq \anispart{(q_{F(p)})}$.
\item $r_{L[p']}$ is anisotropic.
\item $q_{L(T)} \simeq r_{L(T)} \perp p'(T)\form{f_1,\hdots,f_{\mathfrak{i}}}$.
\item The form $\tau = \form{\overline{f_1},\hdots,\overline{f_\mathfrak{i}}}$ $($defined over $L[p'])$ is anisotropic.
\end{enumerate} 
\begin{proof} The existence of a subform $r \subset q$ satisfying (1) is ensured by Corollary \ref{corexcellence}. 
Since $L[p']$ is a purely transcendental extension of $F(p)$, any such $r$ also satisfies (2) in view of Lemma \ref{lemseparable}. Now, as $L$ is a purely transcendental extension of $F$, the field $L[p]$ is a purely transcendental extension of $F[p]$, and hence of $F(p)$. Another application of Lemma \ref{lemseparable} therefore shows that $r_{L[p]} \simeq \anispart{(q_{L[p]})}$. Since $L[p]$ is $L$-isomorphic to $L(T)\left(\sqrt{p'(T)}\right)$ (see \S \ref{secfunctionfields} above), Lemma \ref{lemquadraticextensions} then implies that
$$ q_{L(T)} \simeq r_{L(T)} \perp p'(T)\form{f_1,\hdots,f_\mathfrak{i}} $$
for some $f_1,\hdots,f_\mathfrak{i} \in D(r_{L(T)})$. Multiplying the $f_i$ by squares in $L[T]$ if necessary, we may arrange it so that $f_i \in D(r_{L[T]})$ for all $i$. Thus, there exists a sequence $f_1,\hdots,f_\mathfrak{i} \in D(r_{L[T]})$ for which (3) holds. Among all such sequences, let us choose one for which $\sum_{i=1}^{\mathfrak{i}} \mathrm{deg}_{T_1}f_i$ is \emph{minimal} \big(where, for a non-zero polynomial $f \in L[T]$, $\mathrm{deg}_{T_1}(f)$ denotes the degree of $f$ viewed as a polynomial in the variable $T_1$ over the field $L(T_2,\hdots,T_n)$\big). We claim that for this choice of $f_1,\hdots,f_\mathfrak{i}$, (4) also holds. Suppose, for the sake of contradiction, that this is not the case, i.e., that the form $\form{\overline{f_1},\hdots,\overline{f_\mathfrak{i}}}$ is isotropic over $L[p']$. Then there exist polynomials $h_1,\hdots,h_\mathfrak{i},h \in L[T]$ such that
\begin{enumerate} \item[(i)] $\sum_{i=1}^n h_i^2f_i = p'(T)h$ in $L[T]$.
\item[(ii)] $\mathrm{deg}_{T_1}(h_i) < 2$ for all $i$.
\item[(iii)] $\overline{h_i} \neq 0$ for at least one $i$. \end{enumerate}
Indeed, (i) and (iii) amount to the stated isotropy of the form, while (ii) can be arranged because $T_1^2 = \sum_{i=2}^n \frac{a_i}{a_1}T_i^2$ in the field $L[p']$. Now, as $D(r_{L(T)})$ is closed under addition, we have $p'(T)h \in D(r_{L(T)})$ by (i) and the choice of the $f_i$. Since $r_{L[p']}$ is anisotropic, Lemma \ref{lemmultiplicity} implies that $h = p'(T)h'$ for some $h' \in L[T]$. Note here that $h' \neq 0$. Indeed, if this were not the case, then (i) and (iii) would imply that $\form{f_1,\hdots,f_{\mathfrak{i}}}$ is isotropic over $L(T)$; since the latter form is similar to a subform of the anisotropic form $q_{L(T)}$, this is not so. Furthermore, since $p'(T)^2h' = p'(T)h \in D(r_{L(T)})$, and since $D(r_{L(T)})$ is closed under multiplication by squares in $L(T)$, we have $h' \in D(r_{L(T)})$. Taking the generalized Cassels-Pfister representation theorem for quasilinear quadratic forms into account (Theorem \ref{thmcprep}), we see that, in fact, $h' \in D(r_{L[T]})$. Now, by (iii), there exists an integer $l \in \lbrace 1,\hdots, \mathfrak{i} \rbrace$ such that $h_l \neq 0$. Among all such $l$, let us choose one so that the integer $\mathrm{deg}_{T_1}(h_l^2f_l)$ is maximal. Since $h_l,h' \neq 0$, Proposition \ref{propsubforms} \big(together with (i)\big) implies that $\form{f_1,\hdots,f_\mathfrak{i}} \simeq \form{f_1,\hdots,f_{l-1},p'(T)^2h',f_{l+1},\hdots,f_\mathfrak{i}} \simeq \form{f_1,\hdots,f_{l-1},h',f_{l+1},\hdots,f_\mathfrak{i}}$ as forms over $L(T)$. In other words, $f_1,\hdots,f_{l-1},h',f_{l+1},\hdots,f_\mathfrak{i}$ is a sequence of non-zero elements in $D(r_{L[T]})$ satisfying condition (3). But, since $\mathrm{deg}_{T_1}\big(p'(T)\big) = 2$, (i), (ii) and the choice of $l$ imply that $\mathrm{deg}_{T_1}(h') < \mathrm{deg}_{T_1}(f_l)$, and this contradicts our original choice of the $f_i$. We conclude that our supposition was incorrect, and so the lemma is proved. \end{proof} \end{lemma} 

Let us now fix a subform $r \subset q$ and elements $f_1,\hdots,f_\mathfrak{i} \in D(r_{L[T]})$ satisfying the four conditions of Lemma \ref{lempresentationlemma}. Since $L[p']$ is a purely transcendental extension of $F(p)$, Theorem \ref{thmsubformdec} now follows from the following more precise lemma:

\begin{lemma} \label{lemkeydiv} Let $K = L[p']$. Then, in the above situation, we have
$$ \anispart{\big((p_1)_K \otimes \form{\overline{f_1},\hdots,\overline{f_\mathfrak{i}}}\big)} \subset \anispart{(q_K)} $$
\end{lemma}
\begin{proof} By Proposition \ref{propsubforms}, the statement is equivalent to the assertion that $$ D\big((p_1)_K\otimes \form{\overline{f_1},\hdots,\overline{f_\mathfrak{i}}}\big) \subset D(q_K).$$
As both sides are $K^2$-vector spaces, it suffices to show that the right-hand side contains a set of generators for the left-hand side. Since $D\big((p_1)_K\big)$ is generated by $D(p_L)$ over $K^2$, it is therefore sufficient to show that $b\overline{f_i} \in D(q_K)$ for all $b \in D(p_L)$ and all $1 \leq i \leq \mathfrak{i}$. In fact, it suffices to show this in the case where $b \in D(p_L) \setminus D(p')$. Indeed, if $b \in D(p')$, then $1+b \in D(p_L) \setminus D(p')$; since $D(q_K)$ is a $K^2$-vector space which (by construction) contains $\overline{f_i}$, we have $b\overline{f_i} \in D(q_K) \Leftrightarrow (1+b)\overline{f_i} \in D(q_K)$. Now, in order to check that the statement holds, we first need the following preliminary calculation:

\begin{sublemma} \label{sublemmultiplier} Assume we are in the above situation, and let $b \in D(p_L)\setminus D(p')$ and $1 \leq i \leq \mathfrak{i}$. Then there exist $s_{i,j} \in D(r_{L[T]})$ and $t_{i,j} \in L[T] \setminus \lbrace 0 \rbrace$ $(0 \leq j \leq 1)$ such that:
\begin{enumerate} \item $bf_i = \frac{s_{i,0}}{t_{i,0}^2} + \frac{s_{i,1}}{t_{i,1}^2} \frac{p'(T)}{b}$ in $L(T)$.
\item For each $j \in \lbrace 0,1 \rbrace$, at least one of $\overline{s_{i,j}}$ and $\overline{t_{i,j}}$ is a non-zero element of $K=L[p']$.\end{enumerate}

\begin{proof} For simplicity of notation, let $f = f_i$. Consider the field $M = L(T)\left(\sqrt{\frac{p'(T)}{b}}\right)$. By \S \ref{secfunctionfields}, $M$ may be identified with the function field $L[\eta]$ of the affine quadric over $L$ defined by the vanishing of $\eta = \form{b} \perp p'$. Since $p_L = \form{1} \perp p'$, and since $b \in D(p_L) \setminus D(p')$, we have $D(\eta) = D(p_L)$. In view of Proposition \ref{propsubforms}, it follows that $\eta \simeq p_L$. In particular, $M$ is a degree-1 purely transcendental extension of $L(p)$ (see \S \ref{secfunctionfields}), and is thus a purely transcendental extension of $F(p)$. By Lemma \ref{lemseparable} and the choice of $r$, it follows that $r_M \simeq \anispart{(q_M)}$. Again, by Proposition \ref{propsubforms}, this means that $D(q_M) = D(r_M)$. Now, since $u = \frac{p'(T)}{b}$ is a square in $M$, we have $bf= \frac{p'(T)f}{u} \in D(q_M)$, and so $bf \in D(r_M)$. Since
$$D(r_M) = D(r_{L(T)}) + uD(r_{L(T)})=  D(r_{L(T)}) + \frac{p'(T)}{b}D(r_{L(T)})$$ 
as a subset of $L(T)$ (see Lemma \ref{lemvaluesquadratic}), it follows that we can write $bf = q_0 + q_1 \frac{p'(T)}{b}$ for some $q_0,q_1 \in D(r_{L(T)})$. Because every element of $D(r_{L(T)})$ is evidently the ratio of an element of $D(r_{L[T]})$ and a non-zero square in $L[T]$, we can therefore find elements $s_j \in D(r_{L[T]})$ and $t_j \in L[T] \setminus \lbrace 0 \rbrace$ ($0 \leq j \leq 1$) so that
\begin{equation} \label{eq1} bf = \frac{s_0}{t_0^2} + \frac{s_1}{t_1^2} \frac{p'(T)}{b} \end{equation}
in $L(T)$. Now, for each $j \in \lbrace 0,1 \rbrace$, let $m_j = \mathrm{min}\big(m(s_j),2m(t_j)\big)$ (with $m(s_j)$, $m(t_j)$ defined as in the beginning of the proof). Since $r_K$ is anisotropic, Lemma \ref{lemmultiplicity} shows that the $m(s_j)$, and hence the $m_j$, are even. In particular, if we let
$$ s_j' = \frac{s_j}{p'(T)^{m_j}} \hspace{.5cm} \text{and} \hspace{.5cm} t_j' = \frac{t_j}{p'(T)^{m_j/2}},$$ 
then $s_j' \in D(r_{L[T]})$ and $t_j' \in L[T]\setminus \lbrace 0 \rbrace$ for each $j$. Replacing the pair $(s_0,t_0)$ with $(s_0',t_0')$ and the pair $(s_1,t_1)$ with $(s_1',t_1')$ (this does not alter \eqref{eq1}), we arrive at the situation where each of the pairs $(\overline{s_0},\overline{t_0})$ and $(\overline{s_1},\overline{t_1})$ has at least one non-zero entry, as we wanted. \end{proof} \end{sublemma} 

Returning to the proof of Lemma \ref{lemkeydiv}, let $b \in D(p_L)\setminus D(p')$ and let $1 \leq i \leq \mathfrak{i}$. By Sublemma \ref{sublemmultiplier}, we have
$$ bf_i = \frac{s_{i,0}}{t_{i,0}^2} + \frac{s_{i,1}}{t_{i,1}^2} \frac{p'(T)}{b}$$
for some $s_{i,j} \in D(r_{L[T]})$ and $t_{i,j} \in L[T] \setminus \lbrace 0 \rbrace$ $(0 \leq j \leq 1)$ such that each of the pairs $(\overline{s_{i,0}},\overline{t_{i,0}})$ and $(\overline{s_{i,1}},\overline{t_{i,1}})$ has at least one non-zero entry. We claim that both $\overline{t_{i,0}}$ and $\overline{t_{i,1}}$ are non-zero, or, equivalently, that $m(t_{i,j}) = 0$ for each $j$. To see this, let us first clear denominators in the preceding equation to obtain the equality
\begin{equation} \label{eq2} bf_it_{i,0}^2t_{i,1}^2 = s_{i,0}t_{i,1}^2 + s_{i,1}t_{i,0}^2 \frac{p'(T)}{b}\end{equation}
in the polynomial ring $L[T]$. Now, let $m = \mathrm{min}\big(m(t_{i,0}),m(t_{i,1})\big)$. Our claim is then equivalent to the assertion that $m = m(t_{i,0}) + m(t_{i,1})$. Suppose that this is not the case, and let $j \in \lbrace 0 ,1 \rbrace$ be minimal so that $m(t_{i,l}) = m$, where $l$ is the integer complementary to $j$ in $\lbrace 0,1\rbrace$. Then, reducing both sides of \eqref{eq2} modulo $p'(T)^{2m + j + 1}$, we see that $s_{i,j} \equiv 0 \pmod{p'(T)}$, i.e., that $\overline{s_{i,j}} = 0$. Since the pair $(\overline{s_{i,j}},\overline{t_{i,j}})$ has at least one non-zero entry, it follows that $\overline{t_{i,j}} \neq 0$, or, equivalently, that $m(t_{i,j}) = 0$. But then $m = m(t_{i,0}) + m(t_{i,1})$, which contradicts our supposition. The claim is therefore valid, and so, reducing \eqref{eq2} modulo $p'(T)$ and dividing through by $(\overline{t_{1,0}}\overline{t_{i,1}})^2$, we obtain that
$$ b\overline{f_i} = \overline{s_{i,0}}/\overline{t_{i,0}}^2 $$
in $K$. As $s_{i,0} \in D(r_{L[T]}) \subset D(q_{L[T]})$, this shows that $ b\overline{f_i} \in D(q_K)$, as we needed. \end{proof}

As per the above discussion, Theorem \ref{thmsubformdec} is now proved. \end{proof}

\begin{remark} \label{remquasilinearmain} As already mentioned, if $\witti{1}{p} > 1$, then ``up to replacing $F$ with a purely transcendental extension of itself'' may be removed from the statement of Theorem \ref{thmsubformdec}. Indeed, if $p \simeq \form{1} \perp \widehat{p}$ and $\witti{1}{p}>1$, then we can choose $(F,\widehat{p})$ for the pair $(L,p')$ which was used throughout the proof -- see Remark \ref{remtot} above. It is unclear to the author whether the qualification is really needed when $\witti{1}{p}=1$. In any case, the statement which we have proved is sufficient for the basic applications. \end{remark}

\subsection{Applications} \label{secquasilinearapplications}

We now provide some concrete applications of Theorem \ref{thmsubformdec} to the problem of understanding the splitting behaviour of quasilinear quadratic forms under scalar extension to the function field of a quadric. In particular, we will prove the quasilinear case of Theorem \ref{thmmain}. The reader will note that, in this case, the form $p_1$ plays a similar role to that played by the upper motive of the quadric $P$ in the proof of the characteristic $\neq 2$ case given in \S \ref{secchar0} above. It will be of particular importance to remember that $p_1$ has dimension $\mathrm{dim}(p) - \witti{1}{p}$ in the quasilinear setting, as opposed to the more familiar $\mathrm{dim}(p) -2\witti{1}{p}$; indeed, if $\phi$ is any non-zero quasilinear quadratic form over $F$, then $\mathrm{dim}(\anispart{\phi}) = \mathrm{dim}(\phi) - \witti{0}{\phi}$ (see \S \ref{secisodecomp} above). The ``divisibility indices'' introduced in \S \ref{secdivbyqp} will also have a key role to play here; in effect, the study of these indices replaces the (implicit) use of mod-2 Steenrod operations on Chow groups in the characteristic $\neq 2$ setting. Before proceeding, we make a general remark:

\begin{remark} \label{remgenericsmoothness} Let $p$ and $q$ be anisotropic quadratic forms of dimension $\geq 2$ over $F$ with $q$ quasilinear. If $p$ is \emph{not} quasilinear, then its associated quadric is generically smooth, which amounts to the assertion that its function field $F(p)$ is a separable extension of $F$ (see \cite[(17.15.9)]{EGAIV}). In view of Lemma \ref{lemspecofrationalpoints}, we therefore have $\witti{0}{q_{F(p)}} = 0$ in this case. As a result, when studying the isotropy behaviour of the quasilinear form $q$ under scalar extension to the field $F(p)$, the only case of interest is that where $p$ is also quasilinear. \end{remark}

The first interesting application of Theorem \ref{thmsubformdec} is Theorem \ref{thmp1subform} below, from which one obtains a short proof of the quasilinear part of Theorem \ref{thmKMT}. Before proving Theorem \ref{thmp1subform}, we first make the following quick observation:

\begin{lemma} \label{lemsubforminsensitivity} Let $\psi$ and $\phi$ be anisotropic quasilinear quadratic forms over $F$, and let $L$ be a purely transcendental field extension of $F$. If $\psi_L$ is similar to a subform of $\phi_L$, then $\psi$ is similar to a subform of $\phi$.
\begin{proof} If $F$ is finite, then $F$ is perfect and so $\psi$ and $\phi$ are necessarily split (i.e., 1-dimensional). Since the statement is trivial in this case, we may assume that $F$ is infinite. Furthermore, the problem can be easily reduced to the case where $L$ has finite transcendence degree over $F$ using Proposition \ref{propsubforms}. We can therefore also assume that $L = F(T)$, where $T = (T_1,\hdots,T_n)$ is a system of algebraically independent variables over $F$. Now, if $\psi_L$ is similar to a subform of $\phi_L$, then we can evidently find a polynomial $f(T) \in F[T]$ such that $f(T)\psi_L \subset \phi_L$. Since $F$ is infinite, there exist scalars $a_1,\hdots,a_n \in F$ such that $f(a_1,\hdots,a_n) \neq 0$. Letting $a = f(a_1,\hdots,a_n)$, we now claim that $a\psi \subset \phi$. By Proposition \ref{propsubforms}, it suffices to show that $ab \in D(\phi)$ for all $b \in D(\psi)$. Since $f(T)\psi_L \subset \phi_L$, we certainly have $f(T)b \in D(\phi_L)$. By the generalized Cassels-Pfister representation theorem for quasilinear forms (Theorem \ref{thmcprep}), it follows that $f(T)b \in D(\phi_{F[T]})$. The claim then follows by performing the specialization $(T_1,\hdots,T_n) \dashrightarrow (a_1,\hdots,a_n)$, which is well defined on the polynomial ring $F[T]$.\end{proof} \end{lemma}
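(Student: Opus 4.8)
The plan is to convert the hypothesis into an inclusion of represented-value sets via Proposition \ref{propsubforms}, descend everything to a polynomial ring over $F$, and then specialise the variables. First I would dispose of the degenerate situations: if $F$ is finite it is perfect, so $F^2 = F$ and, since the value set of an anisotropic quasilinear form is an $F^2$-linear subspace of $F$, both $\psi$ and $\phi$ are similar to $\form{1}$, making the claim trivial. So assume $F$ is infinite. Next, since ``$\psi_L$ is similar to a subform of $\phi_L$'' is equivalent, by Proposition \ref{propsubforms} and the anisotropy of $\phi_L$ (which holds by Lemma \ref{lemseparable}, a purely transcendental extension being separable), to the existence of $\mu \in L^*$ with $\mu D(\psi_L) \subseteq D(\phi_L)$, and since it suffices to test this inclusion on the finitely many diagonal entries of $\psi$, all the relevant data (the element $\mu$ and the finitely many coefficients witnessing each membership $\mu d_i \in D(\phi_L)$) already lives over a finitely generated subextension. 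We may therefore assume $L = F(T)$ with $T = (T_1,\hdots,T_n)$ algebraically independent over $F$.

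With this reduction in place, the hypothesis says precisely that $\mu\psi_L$ is a subform of $\phi_L$ for some $\mu \in L^*$. Writing $\mu = h(T)/g(T)^2$ with $g,h \in F[T]$ and $g \neq 0$, and using that scaling a quasilinear form by a square leaves its isometry class unchanged (recall $F^2 \subseteq G(-)$), I may replace $\mu$ by the polynomial $h(T)$; thus $h(T)\psi_L$ is a subform of $\phi_L$, which by Proposition \ref{propsubforms} amounts to the assertion that $h(T)b \in D(\phi_{F(T)})$ for every $b \in D(\psi)$.

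The heart of the argument is then to push this representation down to the polynomial ring. Since $h(T)b$ is an element of $F[T]$ lying in $D(\phi_{F(T)})$, Hoffmann's multivariable Cassels--Pfister representation theorem for quasilinear forms (Theorem \ref{thmcprep}) gives $h(T)b \in D(\phi_{F[T]})$. Because $F$ is infinite and $h \neq 0$, I can choose a point $(a_1,\hdots,a_n) \in F^n$ with $a := h(a_1,\hdots,a_n) \neq 0$; applying the evaluation homomorphism $F[T] \to F$, $T_i \mapsto a_i$, to a vector in $V_\phi \otimes_F F[T]$ representing $h(T)b$ produces a vector in $V_\phi$ representing $ab$, so $ab \in D(\phi)$. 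As $b \in D(\psi)$ was arbitrary, $aD(\psi) \subseteq D(\phi)$, and a final appeal to Proposition \ref{propsubforms} (together with the anisotropy of $\phi$, so that $\anispart{(a\psi)} = a\psi$ and $\anispart{\phi} = \phi$) yields $a\psi \subset \phi$; that is, $\psi$ is similar to a subform of $\phi$.

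I expect the only non-formal step to be the descent from $D(\phi_{F(T)})$ to $D(\phi_{F[T]})$: everything above it is bookkeeping with value sets and the reduction to finitely many variables, and everything below it is the specialisation of a polynomial identity. It is worth noting that this descent is exactly where the quasilinear/characteristic-$2$ hypothesis is essential — the classical one-variable Cassels--Pfister theorem is not enough here — so there is in fact no obstacle precisely because we are working with quasilinear forms.
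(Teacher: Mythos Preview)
Your proof is correct and follows essentially the same path as the paper's: reduce to $L=F(T)$, clear denominators to obtain a polynomial scaling factor, invoke Theorem \ref{thmcprep} to descend representation from $F(T)$ to $F[T]$, and then specialise at a point where the polynomial is non-zero. One small correction to your closing commentary: the quasilinear hypothesis is \emph{not} in fact essential here --- as the paper observes in the remark immediately following the lemma, the statement holds for arbitrary anisotropic quadratic forms in any characteristic via induction on the transcendence degree and the one-variable Cassels--Pfister theorem.
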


\begin{remark} In fact, the quasilinearity hypothesis is not necessary here; one may show that the statement holds for an arbitrary pair of anisotropic quadratic forms over a field of any characteristic using induction on the transcendence degree of $L$ and the general representation theorems of Pfister (\cite[Thm. 17.12]{EKM}) and Cassels-Pfister (\cite[Thm. 17.3]{EKM}). We refrain from going into the details here. \end{remark}

\begin{theorem} \label{thmp1subform} Let $p$ and $q$ be anisotropic quasilinear quadratic forms of dimension $\geq 2$ over $F$. If $q_{F(p)}$ is isotropic, then $p_1$ is similar to a subform of $\anispart{(q_{F(p)})}$.
\begin{proof} By Lemma \ref{lemsubforminsensitivity}, it is sufficient to show this after replacing $F$ with a purely transcendental extension of itself. By Theorem \ref{thmsubformdec}, we may therefore assume that there exists an anisotropic form $\tau$ of dimension $\witti{0}{q_{F(p)}} \geq 1$ over $F(p)$ such that $\anispart{(p_1 \otimes \tau)} \subset \anispart{(q_{F(p)})}$. If $a \in D(\tau)$, then $D(ap_1) \subseteq D(p_1 \otimes \tau) \subset D(q_{F(p)})$. Since $p_1$ is anisotropic, Proposition \ref{propsubforms} then implies that $ap_1 \subset \anispart{(q_{F(p)})}$, which proves the theorem. \end{proof} \end{theorem}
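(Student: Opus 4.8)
The plan is to deduce the theorem directly from Theorem \ref{thmsubformdec}; the one subtlety is that the latter result is stated only ``up to replacing $F$ with a purely transcendental extension of itself'', so I would begin by arranging that this qualification can be ignored.

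\emph{First reduction.} I would first observe that the conclusion ``$p_1$ is similar to a subform of $\anispart{(q_{F(p)})}$'' descends along purely transcendental extensions of $F$. Indeed, if $F'$ is such an extension, then $p_{F'}$ stays anisotropic and $F'(p)$ is a purely transcendental extension of $F(p)$ (Lemma \ref{lemseparable}); decomposing $p_{F(p)}$ and $q_{F(p)}$ into their isotropic parts and anisotropic kernels and base-changing to $F'(p)$, one sees (again using Lemma \ref{lemseparable}) that $(p_{F'})_1 = (p_1)_{F'(p)}$ and $\anispart{(q_{F'(p)})} = (\anispart{(q_{F(p)})})_{F'(p)}$. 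Thus, if the theorem holds for the pair $(p_{F'}, q_{F'})$, then $(p_1)_{F'(p)}$ is similar to a subform of $(\anispart{(q_{F(p)})})_{F'(p)}$, and Lemma \ref{lemsubforminsensitivity}, applied over the base field $F(p)$, yields the assertion over $F$. I may therefore assume that the conclusion of Theorem \ref{thmsubformdec} is available over $F$ itself.

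\emph{Main argument.} Granting this, Theorem \ref{thmsubformdec} provides an anisotropic quasilinear form $\tau$ over $F(p)$ with $\mathrm{dim}(\tau) = \witti{0}{q_{F(p)}} \geq 1$ and $\anispart{(p_1 \otimes \tau)} \subset \anispart{(q_{F(p)})}$. I would then pick any $a \in D(\tau)$; since $\tau$ is anisotropic of positive dimension, $a \neq 0$. As $\tau$ represents $a$ we get $D(ap_1) = aD(p_1) \subseteq D(p_1 \otimes \tau)$, while Proposition \ref{propsubforms}, applied to the subform relation $\anispart{(p_1 \otimes \tau)} \subset \anispart{(q_{F(p)})}$, gives $D(p_1 \otimes \tau) \subseteq D(q_{F(p)})$. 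Hence $D(ap_1) \subseteq D(q_{F(p)}) = D(\anispart{(q_{F(p)})})$, and since $ap_1$ is anisotropic ($p_1$ is, and $a \neq 0$), a second application of Proposition \ref{propsubforms} turns this into $ap_1 \subset \anispart{(q_{F(p)})}$. Thus $p_1$ is similar to a subform of $\anispart{(q_{F(p)})}$, which is the claim.

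\emph{Where the difficulty is.} Essentially all of the work has already been done in Theorem \ref{thmsubformdec} (and, beneath it, Lemmas \ref{lemgenericsubforms} and \ref{lemmultiplicity}); given that theorem, the present statement is a short formal consequence, and the only additional ingredient is the descent in the first reduction, which Lemma \ref{lemsubforminsensitivity} takes care of cleanly. I would also note that when $\witti{1}{p} > 1$ the first reduction is superfluous, since by Remark \ref{remquasilinearmain} the passage to a purely transcendental extension in Theorem \ref{thmsubformdec} is then unnecessary.
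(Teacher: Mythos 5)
Your proposal is correct and follows essentially the same route as the paper: both reduce to the case where the qualification in Theorem \ref{thmsubformdec} is absent via Lemma \ref{lemsubforminsensitivity}, and then conclude by picking $a \in D(\tau)$ and chasing value sets through Proposition \ref{propsubforms}. Your ``first reduction'' merely spells out the identifications $(p_{F'})_1 \simeq (p_1)_{F'(p)}$ and $\anispart{(q_{F'(p)})} \simeq (\anispart{(q_{F(p)})})_{F'(p)}$ that the paper leaves implicit when it says ``it is sufficient to show this after replacing $F$ with a purely transcendental extension of itself.''
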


As a consequence of Theorem \ref{thmp1subform}, we get an elegant explanation for the quasilinear part of Theorem \ref{thmKMT}. This result was originally proved by Totaro in \cite{Totaro1} using the basic machinery of Chow groups (see [\emph{loc. cit.}, Thm. 5.2]); a more elementary proof (rather different to the one presented here) was later given in \cite{Scully1}:

\begin{corollary}[Totaro] \label{corKMTquasilinear} Let $p$ and $q$ be anisotropic quasilinear quadratic forms of dimension $\geq 2$ over $F$. If $q_{F(p)}$ is isotropic, then $\witti{0}{q_{F(p)}} \leq \mathrm{dim}(q) - \mathrm{dim}(p) + \witti{1}{p}$. 
\begin{proof} By Theorem \ref{thmp1subform}, $p_1$ is similar to a subform of $\anispart{(q_{F(p)})}$. In particular, we have
$$\mathrm{dim}(p) -\witti{1}{p} = \mathrm{dim}(p_1) \leq \mathrm{dim}(\anispart{(q_{F(p)})}) = \mathrm{dim}(q) - \witti{0}{q_{F(p}}. $$
Rearranging this inequality, we obtain the desired assertion. \end{proof} \end{corollary}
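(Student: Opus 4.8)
The plan is to obtain the bound as a one-line dimension count from Theorem \ref{thmp1subform}, the substantive work having already been carried out there (and, behind it, in Theorem \ref{thmsubformdec}). Concretely, since $q_{F(p)}$ is isotropic by hypothesis, Theorem \ref{thmp1subform} tells us that the first higher anisotropic kernel $p_1$ of $p$ is similar to a subform of $\anispart{(q_{F(p)})}$; comparing dimensions immediately gives $\mathrm{dim}(p_1) \leq \mathrm{dim}\big(\anispart{(q_{F(p)})}\big)$.

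It then remains only to rewrite both sides. Here I would use the dimension formula for anisotropic parts of quasilinear forms recalled in \S\ref{secisodecomp}, namely $\mathrm{dim}(\anispart{\phi}) = \mathrm{dim}(\phi) - \witti{0}{\phi}$. Applied to $\phi = p_{F(p)}$, together with $\witti{1}{p} = \witti{0}{p_{F(p)}}$ and $\witti{0}{p} = 0$, this reads $\mathrm{dim}(p_1) = \mathrm{dim}(p) - \witti{1}{p}$; applied to $\phi = q_{F(p)}$ it reads $\mathrm{dim}\big(\anispart{(q_{F(p)})}\big) = \mathrm{dim}(q) - \witti{0}{q_{F(p)}}$. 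Substituting these into the inequality above yields $\mathrm{dim}(p) - \witti{1}{p} \leq \mathrm{dim}(q) - \witti{0}{q_{F(p)}}$, and rearranging gives exactly $\witti{0}{q_{F(p)}} \leq \mathrm{dim}(q) - \mathrm{dim}(p) + \witti{1}{p}$, as required.

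There is no genuine obstacle at the level of this corollary: once Theorem \ref{thmp1subform} is available, the argument is purely formal. The one point deserving a moment's attention is that throughout one uses the quasilinear identity $\mathrm{dim}(\anispart{\phi}) = \mathrm{dim}(\phi) - \witti{0}{\phi}$ rather than the characteristic-$\neq 2$ identity $\mathrm{dim}(\anispart{\phi}) = \mathrm{dim}(\phi) - 2\witti{0}{\phi}$; it is precisely because the subform relation supplied by Theorem \ref{thmp1subform} fits this accounting that the Karpenko--Merkurjev--Totaro bound emerges in exactly its classical form. All of the difficulty, of course, has been pushed into Theorem \ref{thmsubformdec}, whose proof is where the real effort lies.
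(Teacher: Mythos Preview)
Your argument is correct and matches the paper's proof essentially verbatim: invoke Theorem \ref{thmp1subform} to get $p_1$ similar to a subform of $\anispart{(q_{F(p)})}$, compare dimensions using the quasilinear identity $\mathrm{dim}(\anispart{\phi}) = \mathrm{dim}(\phi) - \witti{0}{\phi}$, and rearrange. Your added remark about the quasilinear versus characteristic-$\neq 2$ dimension formula is a helpful clarification but not a departure from the paper's route.
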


The next applications make use of the fact that the form $\tau$ appearing in the statement of Theorem \ref{thmsubformdec} has dimension at least $\witti{0}{q_{F(p)}}$. We begin by re-deriving the main result of \cite{Scully3}. This result was used in [\emph{loc. cit.}, \S 6.1] to determine all possible values of the Knebusch splitting pattern for quasilinear forms. This includes the quasilinear part of Theorem \ref{thmKarpenko}, which is an immediate dimension-theoretic consequence:

\begin{corollary}[{\cite[Thm. 5.1]{Scully3}}] \label{corp1div} Let $p$ be an anisotropic quasilinear quadratic form of dimension $\geq 2$ over $F$. Then $2^{\mathfrak{d}_1(p)} \geq \witti{1}{p}$. In other words, $p_1$ is divisible by a quasi-Pfister form of dimension $\geq \witti{1}{p}$. 
\begin{proof} By Lemma \ref{lemdivinvariance}, the statement is insensitive to replacing $F$ with a purely transcendental extension of itself. Applying Theorem \ref{thmsubformdec} to the case where $p=q$, we can therefore assume that we have an anisotropic form $\tau$ of dimension $\witti{1}{p}$ over $F(p)$ such that $\anispart{(p_1 \otimes \tau)} \subset p_1$. As per the proof of Theorem \ref{thmp1subform}, however, $p_1$ is similar to a subform of $\anispart{(p_1 \otimes \tau)}$. For dimension reasons, we therefore conclude that $p_1 \simeq \anispart{(p_1 \otimes \tau)}$. By Corollary \ref{cordivbynormform}, this implies that $p_1$ is divisible by the quasi-Pfister form $\normform{\tau}$. But, since $\tau$ is anisotropic, $\tau$ is similar to a subform of $\normform{\tau}$ (see \S \ref{secnormform}). In particular, we have $\mathrm{dim}(\normform{\tau}) \geq \mathrm{dim}(\tau) = \witti{1}{p}$, and this completes the proof. \end{proof} \end{corollary}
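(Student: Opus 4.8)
The plan is to obtain this as a short formal consequence of Theorem \ref{thmsubformdec}, specialized to the case $q = p$, together with the divisibility criterion of Corollary \ref{cordivbynormform}.

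First I would note that $\mathfrak{d}_1(p)$ is by definition the divisibility index $\mathfrak{d}_0(p_1)$ of the first higher anisotropic kernel $p_1 = \anispart{(p_{F(p)})}$, and that both $\mathfrak{d}_1(p)$ and $\witti{1}{p}$ are insensitive to replacing $F$ by a purely transcendental extension of itself (Lemma \ref{lemdivinvariance}; and \S \ref{secbasic} for the isotropy index). So I may pass to such an extension and arrange to be in the situation of Theorem \ref{thmsubformdec} with the ``up to purely transcendental extension'' qualification removed. Since $p$ is anisotropic and nonzero, $p_{F(p)}$ is isotropic, so Theorem \ref{thmsubformdec} applied with $q = p$ furnishes an anisotropic quasilinear form $\tau$ over $F(p)$ with $\mathrm{dim}(\tau) = \witti{0}{p_{F(p)}} = \witti{1}{p}$ and $\anispart{(p_1 \otimes \tau)} \subset \anispart{(p_{F(p)})} = p_1$.

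The next step is to run the argument of Theorem \ref{thmp1subform} in this special case to extract the reverse containment: for any $a \in D(\tau)$ one has $D(ap_1) \subseteq D(p_1 \otimes \tau) = D\big(\anispart{(p_1 \otimes \tau)}\big)$, so, $p_1$ being anisotropic, Proposition \ref{propsubforms} yields $ap_1 \subset \anispart{(p_1 \otimes \tau)}$. Thus $p_1$ is similar to a subform of $\anispart{(p_1 \otimes \tau)}$, which is itself a subform of $p_1$; comparing dimensions forces $p_1 \simeq \anispart{(p_1 \otimes \tau)}$, and in particular $\anispart{(p_1 \otimes \tau)}$ is similar to $p_1$. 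Now Corollary \ref{cordivbynormform}, applied with its two forms taken to be $p_1$ and $\tau$, says exactly that $p_1$ is then divisible by the quasi-Pfister form $\normform{\tau}$. Finally, since $\tau$ is anisotropic, a scalar multiple of $\tau$ embeds into $\normform{\tau}$ (see \S \ref{secnormform}), so $\mathrm{dim}(\normform{\tau}) \geq \mathrm{dim}(\tau) = \witti{1}{p}$; as $\normform{\tau}$ has $2$-power dimension and $p_1$ is divisible by it, this gives $2^{\mathfrak{d}_1(p)} = 2^{\mathfrak{d}_0(p_1)} \geq \mathrm{dim}(\normform{\tau}) \geq \witti{1}{p}$, which is the claim.

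The genuinely substantive input here is Theorem \ref{thmsubformdec}; granting it, the corollary is pure bookkeeping. The one step inside this bookkeeping that needs a moment's care --- and the place I expect any trouble to hide --- is the identification $p_1 \simeq \anispart{(p_1 \otimes \tau)}$: Theorem \ref{thmsubformdec} only delivers $\mathrm{dim}(\tau) = \witti{1}{p}$ together with one of the two containments, so it is precisely the complementary subform inclusion extracted from the proof of Theorem \ref{thmp1subform} that is needed to promote the inclusion to an isometry. Beyond that I anticipate no obstacles.
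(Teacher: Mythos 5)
Your argument is essentially identical to the paper's own proof: pass to a purely transcendental extension (using Lemma \ref{lemdivinvariance}), apply Theorem \ref{thmsubformdec} with $q=p$ to get $\tau$, use the argument of Theorem \ref{thmp1subform} for the reverse containment, upgrade to an isometry $p_1 \simeq \anispart{(p_1 \otimes \tau)}$ by dimension count, invoke Corollary \ref{cordivbynormform}, and bound $\mathrm{dim}(\normform{\tau})$ below by $\mathrm{dim}(\tau)=\witti{1}{p}$. The only difference is that you spell out a couple of steps the paper leaves implicit (that $\witti{1}{p}$ as well as $\mathfrak{d}_1(p)$ is insensitive to purely transcendental base change, and the explicit extraction of $ap_1 \subset \anispart{(p_1\otimes\tau)}$ from the proof of Theorem \ref{thmp1subform}), which is fine but does not constitute a different route.
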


Given Corollary \ref{corp1div}, we can now generalize it as follows:

\begin{theorem} \label{thmquasilinearmainapplication} Let $p$ and $q$ be anisotropic quasilinear quadratic forms of dimension $\geq 2$ over $F$ such that $q_{F(p)}$ is isotropic. If $\witti{0}{q_{F(p)}} > \mathrm{dim}(q) - \mathrm{dim}(p)$, then $2^{\mathfrak{d}_1(p)} \geq \witti{0}{q_{F(p)}}$. In other words, $p_1$ is divisible by a quasi-Pfister form of dimension $\geq \witti{0}{q_{F(p)}}$ under the given hypotheses.
\begin{proof} Again, by Lemma \ref{lemdivinvariance}, the statement is insensitive to replacing $F$ with a purely transcendental extension of itself. Thus, by Theorem \ref{thmsubformdec}, we may assume that there exists an anisotropic form $\tau$ of dimension $\witti{0}{q_{F(p)}}$ over $F(p)$ such that $\anispart{(p_1 \otimes \tau)} \subset \anispart{(q_{F(p)})}$. Let $\eta = \anispart{(p_1 \otimes \tau)}$. As in the proof of Corollary \ref{corp1div}, $p_1$ is similar to a subform of $\eta$, and the theorem will follow (by exactly the same arguments) if we can show that the two forms have the same dimension under the given hypotheses. Suppose, for the sake of contradiction, that this is not the case. Then $\mathrm{dim}(\eta) \geq \mathrm{dim}(p_1) + 2^{\mathfrak{d}_1(p)}$. Indeed, since $p_1$ is, by definition, divisible by an anisotropic quasi-Pfister form of dimension $2^{\mathfrak{d}_1(p)}$, Lemma \ref{lemanisdivisibility} implies that the same is true of $\eta$. Thus, both $\mathrm{dim}(p_1)$ and $\mathrm{dim}(\eta)$ are divisible by $2^{\mathfrak{d}_1(p)}$, and so our claim follows. In particular, since $\eta \subset \anispart{(q_{F(p)})}$, we have
$$ \mathrm{dim}(p) - \witti{1}{p} + 2^{\mathfrak{d}_1(p)} = \mathrm{dim}(p_1) +  2^{\mathfrak{d}_1(p)} \leq \mathrm{dim}(\eta) \leq \mathrm{dim}(\anispart{(q_{F(p)})}) = \mathrm{dim}(q) - \witti{0}{q_{F(p)}}. $$
Now Corollary \ref{corp1div} asserts that $2^{\mathfrak{d}_1(p)} \geq \witti{1}{p}$. Together with the previous inequality, this gives
$$ \witti{0}{q_{F(p)}} \leq \mathrm{dim}(q) - \mathrm{dim}(p), $$
which contradicts our original hypothesis and thus completes the proof of the theorem. \end{proof} \end{theorem}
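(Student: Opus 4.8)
The plan is to derive the statement from Theorem~\ref{thmsubformdec} by a dimension count modelled on the proof of Corollary~\ref{corp1div}. First I would note that both quantities in play are insensitive to purely transcendental extensions of the base field --- $\mathfrak{d}_1(p)$ by Lemma~\ref{lemdivinvariance}, and $\witti{0}{q_{F(p)}}$ because isotropy indices are --- so I may replace $F$ by a purely transcendental extension of itself, and hence apply Theorem~\ref{thmsubformdec} in its unqualified form. This produces an anisotropic quasilinear form $\tau$ over $F(p)$ with $\mathrm{dim}(\tau) = \witti{0}{q_{F(p)}}$ and $\anispart{(p_1 \otimes \tau)} \subset \anispart{(q_{F(p)})}$. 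Writing $\eta = \anispart{(p_1 \otimes \tau)}$, the same argument as in the proof of Theorem~\ref{thmp1subform} --- pick $a \in D(\tau)$ and apply Proposition~\ref{propsubforms} to $D(a p_1) \subseteq D(p_1 \otimes \tau) \subseteq D(\eta)$ --- shows that $p_1$ is similar to a subform of $\eta$.

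The crux is then to prove that $\mathrm{dim}(\eta) = \mathrm{dim}(p_1)$ under the hypothesis $\witti{0}{q_{F(p)}} > \mathrm{dim}(q) - \mathrm{dim}(p)$. Granting this, the theorem follows exactly as Corollary~\ref{corp1div} does: since $\eta = \anispart{(p_1 \otimes \tau)}$ would then be similar to $p_1$, Corollary~\ref{cordivbynormform} shows that $p_1$ is divisible by $\normform{\tau}$; and because $\tau$ is anisotropic it is similar to a subform of $\normform{\tau}$ (\S\ref{secnormform}), so that $2^{\mathfrak{d}_1(p)} \geq \mathrm{dim}(\normform{\tau}) \geq \mathrm{dim}(\tau) = \witti{0}{q_{F(p)}}$, as desired.

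To establish the equality $\mathrm{dim}(\eta) = \mathrm{dim}(p_1)$ I would argue by contradiction, assuming $\mathrm{dim}(\eta) > \mathrm{dim}(p_1)$. By definition $p_1$ is divisible by an anisotropic quasi-Pfister form $\pi$ of dimension $2^{\mathfrak{d}_1(p)}$; hence $p_1 \otimes \tau$ is divisible by $\pi$, and so is its anisotropic part $\eta$ by Lemma~\ref{lemanisdivisibility}. Thus $2^{\mathfrak{d}_1(p)}$ divides both $\mathrm{dim}(p_1)$ and $\mathrm{dim}(\eta)$, forcing $\mathrm{dim}(\eta) \geq \mathrm{dim}(p_1) + 2^{\mathfrak{d}_1(p)}$. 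Combining this with the inclusion $\eta \subset \anispart{(q_{F(p)})}$ and the identity $\mathrm{dim}(\anispart{(q_{F(p)})}) = \mathrm{dim}(q) - \witti{0}{q_{F(p)}}$ gives
$$\big(\mathrm{dim}(p) - \witti{1}{p}\big) + 2^{\mathfrak{d}_1(p)} \leq \mathrm{dim}(q) - \witti{0}{q_{F(p)}};$$
feeding in the inequality $2^{\mathfrak{d}_1(p)} \geq \witti{1}{p}$ of Corollary~\ref{corp1div} then yields $\witti{0}{q_{F(p)}} \leq \mathrm{dim}(q) - \mathrm{dim}(p)$, contradicting the hypothesis.

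The genuinely deep input here is Theorem~\ref{thmsubformdec} itself; granting it, the step requiring the most care is the contradiction argument, and in particular the two interlocking uses of divisibility: transferring divisibility of $p_1$ by $\pi$ to $\eta$ via Lemma~\ref{lemanisdivisibility} (this is not automatic, since $p_1 \otimes \tau$ is in general isotropic) and the two separate appeals to Corollary~\ref{corp1div}. Everything else is bookkeeping with dimensions.
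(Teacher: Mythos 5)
Your proof is correct and follows the paper's argument essentially line for line: the same reduction via Lemma \ref{lemdivinvariance} and Theorem \ref{thmsubformdec}, the same contradiction argument using Lemma \ref{lemanisdivisibility} to transfer divisibility to $\eta$, and the same final appeal to Corollary \ref{corp1div}. You have simply spelled out two steps that the paper compresses (the purely-transcendental invariance of $\witti{0}{q_{F(p)}}$, and the concluding appeal to Corollary \ref{cordivbynormform}), which is fine.
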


Taking dimensions, we obtain the quasilinear part of Theorem \ref{thmmain} (in light of Remark \ref{remgenericsmoothness}, it is enough to treat the case where both $p$ and $q$ are quasilinear):

\begin{corollary} \label{cormainboundquasilinear} Let $p$ and $q$ be anisotropic quasilinear quadratic forms of dimension $\geq 2$ over $F$. Then $ \witti{0}{q_{F(p)}} \leq \mathrm{max}\big(\mathrm{dim}(q) -\mathrm{dim}(p),2^{\mathfrak{d}_1(p)}\big)$. In particular, setting\\ $s = v_2\big(\mathrm{dim}(p) - \witti{1}{p}\big)$, we have
$$ \witti{0}{q_{F(p)}} \leq \mathrm{max}\big(\mathrm{dim}(q) -\mathrm{dim}(p),2^{s}\big).$$
\begin{proof} The first statement follows immediately from Theorem \ref{thmquasilinearmainapplication}. The second follows from the first, since $2^{\mathfrak{d}_1(p)}$ is, by definition, a divisor of $\mathrm{dim}(p_1) = \mathrm{dim}(p) - \witti{1}{p}$. \end{proof} \end{corollary}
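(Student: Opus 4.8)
The plan is to derive this statement purely formally from Theorem~\ref{thmquasilinearmainapplication}, treating the two terms of the maximum as the two possible outcomes of a dichotomy. First I would dispose of the trivial case: if $q_{F(p)}$ is anisotropic, then $\witti{0}{q_{F(p)}} = 0$ and there is nothing to prove, since both terms of the maximum are non-negative. So one may assume $q_{F(p)}$ is isotropic. If $\witti{0}{q_{F(p)}} \leq \mathrm{dim}(q) - \mathrm{dim}(p)$, then the first term of the maximum already dominates and we are done. The only remaining case is $\witti{0}{q_{F(p)}} > \mathrm{dim}(q) - \mathrm{dim}(p)$, and here the hypotheses of Theorem~\ref{thmquasilinearmainapplication} are exactly met, so that result applies directly and gives $2^{\mathfrak{d}_1(p)} \geq \witti{0}{q_{F(p)}}$, i.e.\ the second term of the maximum dominates. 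This establishes the bound $\witti{0}{q_{F(p)}} \leq \mathrm{max}\big(\mathrm{dim}(q) - \mathrm{dim}(p),\, 2^{\mathfrak{d}_1(p)}\big)$.

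For the second (numerical) assertion, the only thing to check is that $2^{\mathfrak{d}_1(p)} \leq 2^{s}$, which I would obtain from the definition of the divisibility index together with the quasilinear dimension formula. By definition $\mathfrak{d}_1(p) = \mathfrak{d}_0(p_1)$, so the anisotropic form $p_1$ is divisible by a $\mathfrak{d}_1(p)$-fold quasi-Pfister form $\pi$; writing $p_1 \simeq \pi \otimes \rho$ shows that $\mathrm{dim}(p_1) = 2^{\mathfrak{d}_1(p)}\,\mathrm{dim}(\rho)$ is a multiple of $2^{\mathfrak{d}_1(p)}$. On the other hand, since $p$ is anisotropic we have $\wittj{0}{p} = 0$, hence $\witti{1}{p} = \wittj{1}{p}$, and the quasilinear identity $\mathrm{dim}(\anispart{\phi}) = \mathrm{dim}(\phi) - \witti{0}{\phi}$ of \S\ref{secisodecomp} gives $\mathrm{dim}(p_1) = \mathrm{dim}(p) - \witti{1}{p}$. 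Thus $2^{\mathfrak{d}_1(p)}$ is a power of $2$ dividing $\mathrm{dim}(p) - \witti{1}{p}$, and since $2^{s}$ is by definition the largest such power of $2$, we conclude $2^{\mathfrak{d}_1(p)} \leq 2^{s}$. Substituting this into the first bound yields the displayed inequality.

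I do not expect a genuine obstacle at this stage: all of the substantive content has been carried out in Theorem~\ref{thmsubformdec} and its consequence Theorem~\ref{thmquasilinearmainapplication}, and the present corollary is a bookkeeping deduction. If forced to name the most delicate point, it is simply ensuring that Theorem~\ref{thmquasilinearmainapplication} is only invoked when its hypotheses genuinely hold — namely that $q_{F(p)}$ is isotropic \emph{and} $\witti{0}{q_{F(p)}} > \mathrm{dim}(q) - \mathrm{dim}(p)$ — which is exactly why the two trivial cases must be split off first rather than applying the theorem unconditionally.
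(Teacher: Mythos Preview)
Your proof is correct and follows exactly the same approach as the paper's, simply spelling out the case analysis that the paper compresses into ``follows immediately from Theorem~\ref{thmquasilinearmainapplication}''. One tiny slip: $\mathrm{dim}(q)-\mathrm{dim}(p)$ need not be non-negative, but this is harmless since $2^{\mathfrak{d}_1(p)}\geq 1$ already forces the maximum to be non-negative.
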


\begin{remark} In the situation of Corollary \ref{cormainboundquasilinear}, the inequality $\mathfrak{d}_1(p) \leq s$ need not be an equality. For example, if $p$ is ``generic'' of dimension $2^n + 1$ for some positive integer $n$, then $\mathfrak{d}_1(p) = 0$ (see \cite[Lem. 2.46]{Scully3}), while $s = n$ by Theorem \ref{thmKarpenko}. Interestingly, we do not know of an analogue of the integer $\mathfrak{d}_1(p)$ in the characteristic $\neq 2$ theory. \end{remark}

In fact, the proof of Theorem \ref{thmquasilinearmainapplication} shows that Corollary \ref{cormainboundquasilinear} may be refined as follows:

\begin{corollary} \label{corquasilinearmainrefined} Let $p$ and $q$ be anisotropic quasilinear quadratic forms of dimension $\geq 2$ over $F$. Then $2^{\mathfrak{d}_1(p)} \geq \witti{1}{p}$ and
$$ \witti{0}{q_{F(p)}} \leq \mathrm{max}\big(\mathrm{dim}(q) -\mathrm{dim}(p)+\witti{1}{p} - 2^{\mathfrak{d}_1(p)},2^{\mathfrak{d}_1(p)}\big).$$ \end{corollary}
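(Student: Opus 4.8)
The first assertion $2^{\mathfrak{d}_1(p)} \geq \witti{1}{p}$ is nothing but Corollary \ref{corp1div}, so the plan is to focus on the displayed inequality, and the idea is simply to rerun the proof of Theorem \ref{thmquasilinearmainapplication} while retaining the dimension count that was there used only to force a contradiction. First I would dispose of the trivial case: if $q_{F(p)}$ is anisotropic, then $\witti{0}{q_{F(p)}} = 0$ and the right-hand side is at least $2^{\mathfrak{d}_1(p)} \geq 1$, so there is nothing to prove. Assuming from now on that $q_{F(p)}$ is isotropic, I would appeal to Lemma \ref{lemdivinvariance} to reduce to the conclusion of Theorem \ref{thmsubformdec}: after replacing $F$ by a purely transcendental extension of itself, there is an anisotropic quasilinear form $\tau$ over $F(p)$ with $\mathrm{dim}(\tau) = \witti{0}{q_{F(p)}}$ and $\eta := \anispart{(p_1 \otimes \tau)} \subset \anispart{(q_{F(p)})}$.

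Next I would record the two facts about $\eta$ that appear in the proof of Theorem \ref{thmquasilinearmainapplication}: by the argument of Theorem \ref{thmp1subform}, $p_1$ is similar to a subform of $\eta$, so that $\mathrm{dim}(p_1) \leq \mathrm{dim}(\eta)$; and since $p_1$ is divisible by an anisotropic quasi-Pfister form of dimension $2^{\mathfrak{d}_1(p)}$, Lemma \ref{lemanisdivisibility} shows that $\eta$ is too, so that both $\mathrm{dim}(p_1)$ and $\mathrm{dim}(\eta)$ are divisible by $2^{\mathfrak{d}_1(p)}$. The key step is then a dichotomy according to whether $\mathrm{dim}(\eta)$ equals $\mathrm{dim}(p_1)$ or strictly exceeds it. In the equality case, $p_1 \simeq \eta = \anispart{(p_1 \otimes \tau)}$, so Corollary \ref{cordivbynormform} tells us that $p_1$ is divisible by the quasi-Pfister form $\normform{\tau}$; since $\tau$ is anisotropic it is similar to a subform of $\normform{\tau}$, whence $2^{\mathfrak{d}_1(p)} \geq \mathrm{dim}(\normform{\tau}) \geq \mathrm{dim}(\tau) = \witti{0}{q_{F(p)}}$, which is the second term in the maximum. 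In the strict case, divisibility of both dimensions by $2^{\mathfrak{d}_1(p)}$ forces $\mathrm{dim}(\eta) \geq \mathrm{dim}(p_1) + 2^{\mathfrak{d}_1(p)}$, and the inclusion $\eta \subset \anispart{(q_{F(p)})}$ then gives
$$ \mathrm{dim}(p) - \witti{1}{p} + 2^{\mathfrak{d}_1(p)} = \mathrm{dim}(p_1) + 2^{\mathfrak{d}_1(p)} \leq \mathrm{dim}(\eta) \leq \mathrm{dim}(q) - \witti{0}{q_{F(p)}}, $$
that is, $\witti{0}{q_{F(p)}} \leq \mathrm{dim}(q) - \mathrm{dim}(p) + \witti{1}{p} - 2^{\mathfrak{d}_1(p)}$, which is the first term. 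Combining the two cases yields the stated bound.

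I do not expect a genuine obstacle here: the substance is already contained in Theorem \ref{thmsubformdec} and Corollaries \ref{corp1div} and \ref{cordivbynormform}, and what remains is essentially bookkeeping. The only points deserving care are checking that the dichotomy is exhaustive, that each branch really produces one of the two quantities in the maximum, and that the trivial case in which $q_{F(p)}$ is anisotropic is not overlooked; beyond that, every inequality above is immediate.
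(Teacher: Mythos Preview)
Your argument is correct and is exactly the unpacking of ``the proof of Theorem \ref{thmquasilinearmainapplication} shows \ldots'' that the paper has in mind: the same dichotomy on $\mathrm{dim}(\eta)$ versus $\mathrm{dim}(p_1)$, with the strict case yielding the first term of the maximum via the displayed inequality and the equality case yielding the second via Corollary \ref{cordivbynormform}. One tiny point of precision: from ``$p_1$ is similar to a subform of $\eta$'' and $\mathrm{dim}(\eta)=\mathrm{dim}(p_1)$ you only get that $p_1$ is \emph{similar} to $\eta$, not $p_1 \simeq \eta$; but this is exactly what Corollary \ref{cordivbynormform} requires, so nothing changes.
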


\begin{remark} This refinement is non-trivial if $2^{\mathfrak{d}_1(p)} > \witti{1}{p}$, which happens, for example, if $p$ is odd-dimensional and $\witti{1}{p} > 1$. Indeed, in that case, $\witti{1}{p}$ is odd by Corollary \ref{corp1div}. \end{remark}

Corollary \ref{cormainboundquasilinear} above was deduced from the statement of Theorem \ref{thmsubformdec} by identifying a certain situation in which the subform $\anispart{(p_1 \otimes \tau)}$ of $\anispart{(q_{F(p)})}$ is similar to $p_1$. Another interesting problem is to find circumstances under which $\anispart{(p_1 \otimes \tau)}$ is similar to $\tau$. As the proof of the next proposition shows, one situation in which this happens is that where $\witti{0}{q_{F(p)}}$ is ``close'' to attaining its maximal possible value of $\mathrm{dim}(q)/2$. Unfortunately, formulating what this means in general terms necessitates a certain degree of technicality; we hope, however, that the subsequent discussion will help to illuminate the more concrete meaning of our observation. Before stating the proposition, we recall (see \S \ref{secnormform}) that if $p$ is an anisotropic quasilinear form over $F$, then $\mathrm{lndeg}(p)$ denotes the integer $\mathrm{log}_2\big(\mathrm{dim}(\normform{p})\big)$. Since $p$ is similar to a subform of $\normform{p}$, we have $\mathrm{dim}(p) \leq 2^{\mathrm{lndeg}(p)}$. 

\begin{proposition} \label{propquasilinearsecondmainapplication} Let $p$ and $q$ be anisotropic quasilinear quadratic forms of dimension $\geq 2$ over $F$ and let $\epsilon$ be the unique integer in $[1,2^{\mathfrak{d}_1(p)}]$ such that $\witti{0}{q_{F(p)}} + \epsilon \equiv 0 \pmod{2^{\mathfrak{d}_1(p)}}$. If $\mathrm{dim}(q) - 2\witti{0}{q_{F(p)}} < \epsilon$, then, after possibly replacing $F$ with a purely transcendental extension of itself, $\anispart{(q_{F(p)})}$ contains a subform $r$ such that
\begin{enumerate} \item $\mathrm{dim}(r) = \witti{0}{q_{F(p)}}$.
\item $\mathfrak{d}_0(r) \geq \mathrm{lndeg}(p) - 1$, i.e., $r$ is divisible by a quasi-Pfister form of dimension $2^{\mathrm{lndeg}(p) -1}$. \end{enumerate}
\begin{proof} By Theorem \ref{thmsubformdec}, we may assume that there exists an anisotropic form $\tau$ of dimension $\witti{0}{q_{F(p)}}$ over $F(p)$ such that $\anispart{(p_1 \otimes \tau)} \subset \anispart{(q_{F(p)})}$. We claim that, under the given hypotheses, $\tau$ is similar to $\eta := \anispart{(p_1 \otimes \tau)}$. Exactly as in the proof of Theorem \ref{thmp1subform}, $\tau$ is certainly similar to a subform of $\eta$. It therefore suffices to show that $\mathrm{dim}(\tau) = \mathrm{dim}(\eta)$. Suppose, for the sake of contradiction, that this is not the case. Then $\mathrm{dim}(\eta) \geq \witti{0}{q_{F(p)}} + \epsilon$. Indeed, $p_1$ is, by definition, divisible by a quasi-Pfister form of dimension $2^{\mathfrak{d}_1(p)}$. By Lemma \ref{lemanisdivisibility}, the same is therefore true of $\eta$. Thus $\mathrm{dim}(\eta)$ is divisible by $2^{\mathfrak{d}_1(p)}$. Since $\mathrm{dim}(\tau) = \witti{0}{q_{F(p)}}$, the inequality $\mathrm{dim}(\eta) \geq \witti{0}{q_{F(p)}} + \epsilon$ then follows by the very definition of $\epsilon$. But, since $\eta$ is a subform of $\anispart{(q_{F(p)})}$, this yields
$$ \witti{0}{q_{F(p)}} + \epsilon \leq \mathrm{dim}(\eta) \leq \mathrm{dim}(\anispart{(q_{F(p)})}) = \mathrm{dim}(q) - \witti{0}{q_{F(p)}}, $$
which contradicts our hypothesis. We can therefore conclude that $\tau$ is similar to $\eta = \anispart{(p_1 \otimes \tau)}$. By Corollary \ref{cordivbynormform}, it follows that $\tau$ is divisible by the quasi-Pfister form $\normform{(p_1)}$. But the latter form has dimension $2^{\mathrm{lndeg}(p) - 1}$ by Lemma \ref{lemndegtower}. Thus, choosing any $a \in D(p_1)$, we see that $r := a \tau$ has the required properties. \end{proof} \end{proposition}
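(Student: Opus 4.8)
The plan is to obtain this as a bookkeeping consequence of Theorem \ref{thmsubformdec}, exactly along the lines of the proof of Theorem \ref{thmquasilinearmainapplication}. First observe that neither the hypothesis nor the conclusion is affected by replacing $F$ with a purely transcendental extension of itself: the integers $\witti{0}{q_{F(p)}}$, $\mathfrak{d}_1(p)$, $\mathrm{lndeg}(p)$ and hence $\epsilon$ are all insensitive to such extensions (see \S \ref{secbasic}, Lemma \ref{lemdivinvariance}, and Lemma \ref{lemndegtower}). We may therefore assume that $q_{F(p)}$ is isotropic (the statement being vacuous otherwise) and that Theorem \ref{thmsubformdec} applies directly, furnishing an anisotropic quasilinear form $\tau$ over $F(p)$ with $\mathrm{dim}(\tau) = \witti{0}{q_{F(p)}}$ and $\anispart{(p_1 \otimes \tau)} \subset \anispart{(q_{F(p)})}$. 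Set $\eta := \anispart{(p_1 \otimes \tau)}$.

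The key step is to show that $\tau$ is similar to $\eta$. One inclusion is routine: exactly as in the proof of Theorem \ref{thmp1subform}, for any $a \in D(p_1)$ one has $aD(\tau) \subseteq D(p_1 \otimes \tau) = D(\eta)$, so $a\tau \subset \eta$ by Proposition \ref{propsubforms}; in particular $\tau$ is similar to a subform of $\eta$, and it remains to prove $\mathrm{dim}(\eta) = \mathrm{dim}(\tau)$. I would argue by contradiction: if $\mathrm{dim}(\eta) > \mathrm{dim}(\tau)$, then since $p_1$ is by definition divisible by an anisotropic quasi-Pfister form of dimension $2^{\mathfrak{d}_1(p)}$, Lemma \ref{lemanisdivisibility} forces $\eta$ to be divisible by the same form, so $2^{\mathfrak{d}_1(p)} \mid \mathrm{dim}(\eta)$. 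Because $\mathrm{dim}(\tau) = \witti{0}{q_{F(p)}}$ and $\epsilon$ is the least positive integer with $\witti{0}{q_{F(p)}} + \epsilon \equiv 0 \pmod{2^{\mathfrak{d}_1(p)}}$, the first multiple of $2^{\mathfrak{d}_1(p)}$ exceeding $\witti{0}{q_{F(p)}}$ is $\witti{0}{q_{F(p)}} + \epsilon$, so $\mathrm{dim}(\eta) \geq \witti{0}{q_{F(p)}} + \epsilon$. On the other hand $\eta \subset \anispart{(q_{F(p)})}$ gives $\mathrm{dim}(\eta) \leq \mathrm{dim}(q) - \witti{0}{q_{F(p)}}$ (recall $\mathrm{dim}(\anispart{\phi}) = \mathrm{dim}(\phi) - \witti{0}{\phi}$ in the quasilinear setting), whence $\mathrm{dim}(q) - 2\witti{0}{q_{F(p)}} \geq \epsilon$, contradicting the hypothesis.

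With $\tau \simeq \eta = \anispart{(p_1 \otimes \tau)}$ established, I would invoke Corollary \ref{cordivbynormform} with the pair $(\tau, p_1)$ in place of $(q,p)$ to conclude that $\tau$ is divisible by the norm form $\normform{(p_1)}$, which by Lemma \ref{lemndegtower} is a quasi-Pfister form of dimension $2^{\mathrm{lndeg}(p_1)} = 2^{\mathrm{lndeg}(p) - 1}$. Finally, fixing any $a \in D(p_1)$ and setting $r := a\tau$, the first paragraph gives $r \subset \eta \subset \anispart{(q_{F(p)})}$; moreover $\mathrm{dim}(r) = \mathrm{dim}(\tau) = \witti{0}{q_{F(p)}}$, and $r$, being similar to $\tau$, is still divisible by $\normform{(p_1)}$, so $\mathfrak{d}_0(r) \geq \mathrm{lndeg}(p) - 1$. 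This yields all the asserted properties of $r$.

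\textbf{Expected main obstacle.} The proposition itself is essentially formal once Theorem \ref{thmsubformdec} is available, so there is no serious difficulty beyond getting the divisibility-and-dimension arithmetic involving $2^{\mathfrak{d}_1(p)}$ and $\epsilon$ to line up precisely in the contradiction step; the genuine depth lies entirely in Theorem \ref{thmsubformdec}, whose proof supplies the form $\tau$ and is where the real work takes place.
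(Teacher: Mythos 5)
Your proposal is correct and follows essentially the same argument as the paper: invoke Theorem \ref{thmsubformdec} to produce $\tau$, show $\tau$ is similar to $\eta=\anispart{(p_1\otimes\tau)}$ by combining the easy subform inclusion with the dimension count via Lemma \ref{lemanisdivisibility}, the definition of $\epsilon$, and the hypothesis, and then apply Corollary \ref{cordivbynormform} and Lemma \ref{lemndegtower} to extract the required divisibility. The only difference is that you spell out the (routine) invariance under purely transcendental base change and the trivial non-isotropic case, which the paper leaves implicit.
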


Proposition \ref{propquasilinearsecondmainapplication} immediately gives the following result:

\begin{theorem} \label{thmquasilinearsecondmainapplication} Let $p$ and $q$ be anisotropic quasilinear quadratic forms of dimension $\geq 2$ over $F$ and let $\epsilon$ be the unique integer in $[1,2^{\mathfrak{d}_1(p)}]$ such that $\witti{0}{q_{F(p)}} + \epsilon \equiv 0 \pmod{2^{\mathfrak{d}_1(p)}}$. Then either
\begin{enumerate} \item $\witti{0}{q_{F(p)}} \leq \frac{\mathrm{dim}(q) - \epsilon}{2}$, or
\item $\witti{0}{q_{F(p)}}$ is divisible by $2^{\mathrm{lndeg}(p) - 1}$. \end{enumerate} \end{theorem}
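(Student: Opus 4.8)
The plan is to read Theorem~\ref{thmquasilinearsecondmainapplication} off from Proposition~\ref{propquasilinearsecondmainapplication} by a single case distinction. The starting observation is that alternative~(1), namely the inequality $\witti{0}{q_{F(p)}} \leq \frac{\mathrm{dim}(q) - \epsilon}{2}$, is nothing but a rewriting of $\mathrm{dim}(q) - 2\witti{0}{q_{F(p)}} \geq \epsilon$. I would therefore split according to whether this inequality holds: if it does, alternative~(1) is satisfied and there is nothing more to prove.

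In the complementary case we have $\mathrm{dim}(q) - 2\witti{0}{q_{F(p)}} < \epsilon$, which is exactly the hypothesis needed to apply Proposition~\ref{propquasilinearsecondmainapplication}. Doing so yields, possibly after passing from $F$ to a purely transcendental extension of itself, a subform $r \subset \anispart{(q_{F(p)})}$ with $\mathrm{dim}(r) = \witti{0}{q_{F(p)}}$ which is divisible by a quasi-Pfister form $\sigma$ of dimension $2^{\mathrm{lndeg}(p)-1}$. Since $\mathrm{dim}(\sigma \otimes \rho) = \mathrm{dim}(\sigma)\,\mathrm{dim}(\rho)$ for every form $\rho$, the dimension of any form divisible by $\sigma$ is a multiple of $\mathrm{dim}(\sigma)$; applying this to $r$ gives that $2^{\mathrm{lndeg}(p)-1}$ divides $\mathrm{dim}(r) = \witti{0}{q_{F(p)}}$, which is alternative~(2).

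The only point requiring a word of justification is that the base-field change permitted by the proposition is harmless: a purely transcendental extension $L$ of $F$ induces a purely transcendental extension $L(p)$ of $F(p)$ (this is visible from the explicit description of function fields of quadrics in \S\ref{secfunctionfields}), so $\witti{0}{q_{L(p)}} = \witti{0}{q_{F(p)}}$ by the invariance of isotropy indices under purely transcendental extensions recalled in \S\ref{secbasic}; the integers $\epsilon$, $\mathrm{dim}(q)$ and $\mathrm{lndeg}(p)$ are likewise unaffected, so the conclusion obtained over $L$ is a statement about the original data. Beyond this bookkeeping there is no real obstacle — all the substance of the theorem is already contained in Proposition~\ref{propquasilinearsecondmainapplication}.
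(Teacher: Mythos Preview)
Your proposal is correct and follows exactly the route the paper intends: the paper simply states that Proposition~\ref{propquasilinearsecondmainapplication} ``immediately gives'' Theorem~\ref{thmquasilinearsecondmainapplication}, and you have spelled out that immediate deduction (including the harmless bookkeeping about the purely transcendental base change) in full.
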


\begin{remark} By Corollary \ref{corp1div}, we have $2^{\mathfrak{d}_1(p)} \geq \witti{1}{p}$. Thus, the larger is $\witti{1}{p}$, the more interesting the restrictions of Theorem \ref{thmquasilinearsecondmainapplication} become. \end{remark}

\begin{example} Let $p$ and $q$ be anisotropic quasilinear quadratic forms of dimension $\geq 2$ over $F$ such that $\witti{0}{q_{F(p)}} = \frac{1}{2}\mathrm{dim}(q)$ (this is the maximal possible value of $\witti{0}{q_{F(p)}}$, and the situation is analogous to that in characteristic $\neq 2$ where one form becomes hyperbolic over the function field of the other -- see \cite[Cor. 2.32]{Scully3}). Then 
$$v_2\big(\mathrm{dim}(q)\big) \geq \mathrm{lndeg}(p) \geq \mathrm{log}_2\big(\mathrm{dim}(p)\big),$$
i.e., $\mathrm{dim}(q)$ is divisible by a power of $2$ which is at least as large as $2^{\mathrm{lndeg}(p)}$, and hence $\mathrm{dim}(p)$ (see \S \ref{secnormform}). Indeed, in this situation, we are necessarily in case (2) of Theorem \ref{thmquasilinearsecondmainapplication}. Since $\mathrm{dim}(q) = 2\witti{0}{q_{F(p)}}$, the claim follows immediately (in fact, in this case, one can show that $q$ is divisible by the quasi-Pfister form $\normform{p}$ of dimension $2^{\mathrm{lndeg}(p)}$ -- see \cite[Thm. 7.7]{Hoffmann2}). Theorem \ref{thmquasilinearsecondmainapplication} may therefore be viewed as a generalization of this result, giving us necessary conditions in order for $\witti{0}{q_{F(p)}}$ to be ``close'' to its maximal value. Again, the index being ``close'' to its maximal value is analogous to the situation in characteristic $\neq 2$ where one form becomes ``almost hyperbolic'' over the function field of another quadric. In that setting, there are no known general results in the spirit of Theorem \ref{thmquasilinearsecondmainapplication}. \end{example}

\begin{example} We can illuminate Theorem \ref{thmquasilinearsecondmainapplication} further by working through a concrete example, namely, that in which $p$ is an $(n+1)$-fold \emph{quasi-Pfister neighbour}, i.e., $\mathrm{dim}(p)>2^n$ and $p$ is similar to a subform of an anisotropic $(n+1)$-fold quasi-Pfister form (for example, $p$ could itself be a quasi-Pfister form). In this case, we have $\mathfrak{d}_1(p) = n$ and $\mathrm{lndeg}(p) = n+1$ (see \cite[Cor. 3.11]{Scully3} for details). Theorem \ref{thmquasilinearsecondmainapplication} therefore implies that if $q$ is an anisotropic quasilinear quadratic form of dimension $\geq 2$ over $F$, then either 
\begin{enumerate} \item $\witti{0}{q_{F(p)}} \leq \frac{\mathrm{dim}(q) - \epsilon}{2}$, or
\item $\witti{0}{q_{F(p)}}$ is divisible by $2^n$, \end{enumerate}
where $\epsilon$ is the unique integer in $[1,2^n]$ such that $\witti{0}{q_{F(p)}} + \epsilon \equiv 0 \pmod{2^n}$. Over fields of characteristic $\neq 2$, the same statement is known to hold in the (very special) case where $\witti{0}{q_{F(p)}} = \frac{1}{2}\mathrm{dim}(q)$; indeed, a classical result due to Arason and Pfister asserts that a quadratic form which becomes hyperbolic over the function field of a Pfister quadric is necessarily divisible by the corresponding Pfister form (see \cite[Cor. 23.6]{EKM}). However, there seems to be no known generalization of this result similar to the one just given for quasilinear forms. \end{example}

\noindent {\bf Acknowledgements.} The author gratefully acknowledges the support of a PIMS postdoctoral fellowship and NSERC discovery grant during the preparation of this article.

\bibliographystyle{alphaurl}

\end{document}